\documentclass[12pt,reqno,a4paper]{amsart}
\usepackage{amsmath,amssymb,amsthm,amsfonts}
\usepackage[dvips]{graphicx}
\usepackage{mathrsfs}

\usepackage{color}

\usepackage[
  a4paper,
  left=1.2in,
  right=1.2in,
  top=1.2in,
  bottom=1.2in,
  headheight=20pt,
  headsep=0.25in,
  heightrounded
]{geometry}

\numberwithin{equation}{section}

{\theoremstyle{definition}
\newtheorem{defn}{Definition}[section]}
\newtheorem{theorem}{Theorem}[section]
\newtheorem{proposition}[theorem]{Proposition}

\newtheorem{lemma}[theorem]{Lemma}

{\theoremstyle{definition}
{
\newtheorem{remark}[theorem]{Remark}

}}

\newcommand{\Vs}{{\mathscr{V}}}
\newcommand{\Bs}{{\mathscr{B}}}
\newcommand{\Us}{{\mathscr{U}}}

\newcommand{\A}{{\mathcal A}}

\newcommand{\CC}{{\mathcal C}}

\newcommand{\GG}{{\mathcal G}}

\newcommand{\LL}{{\mathcal L}}

\newcommand{\UU}{{\mathcal U}}

\newcommand{\Cc}{{\mathbb{C}}}

\newcommand{\Ee}{{\mathbb{E}}}

\newcommand{\Ii}{{\mathbb{I}}}

\newcommand{\Kk}{{\mathbb{K}}}

\newcommand{\Nn}{{\mathbb{N}}}

\newcommand{\Rr}{{\mathbb{R}}}

\newcommand{\Tt}{{\mathbb{T}}}

\newcommand{\Zz}{{\mathbb{Z}}}

\def\e{\mathrm{e}}

\def\diag{\operatorname{diag}}

\def\GL{\operatorname{GL}}

\def\SL{\operatorname{SL}}

\def\SLZ{\SL_d(\Zz)}
\def\SLR{\SL_d(\Rr)}

\newcommand{\abs}[1]{\left| #1\right|}

\newcommand{\id}  {\operatorname{id}}

\newcommand{\rot}{\operatorname{rot}}

\newcommand{\comment}[1]{}
\def\veck{{\text{\boldmath$k$}}}

\newcommand{\vertiii}[1]{{\left\vert\kern-0.25ex\left\vert\kern-0.25ex\left\vert #1 
    \right\vert\kern-0.25ex\right\vert\kern-0.25ex\right\vert}}

\def\veck{{\text{$k$}}}

\def\vecalf{{\text{$\alpha$}}}

\def\vecomega{{\text{$\omega$}}}

\begin{document}

\title[Brjuno condition and best approximations]{Brjuno condition through best approximations and the linearization problem}
\date{\today}

\author[Chevallier]{Nicolas Chevallier}
\address{Département de Mathématiques - IRIMAS,
Université de Haute-Alsace,
18 Rue des Frères Lumière, 68200 Mulhouse, France}
\email{nicolas.chevallier@uha.fr}

\author[Lopes Dias]{Jo\~{a}o Lopes Dias}
\address{
ISEG Research, ISEG Lisbon School of Economics and Management, Universidade de Lisboa, Lisbon, Portugal
}
\email{jldias@iseg.ulisboa.pt}

\author[Gaiv\~ao]{Jos\'e Pedro Gaiv\~ao}
\address{Departamento de Matem\'atica, ISEG\\
Universidade de Lisboa\\
Rua do Quelhas 6, 1200-781 Lisboa, Portugal}
\email{jpgaivao@iseg.ulisboa.pt}

\author[Marnat]{Antoine Marnat}
\address{Laboratoire d'Analyse et de Mathématiques Appliquées\\
Université Paris Est Créteil - Val de Marne\\
61, avenue du Général de Gaulle\\
Bureau P4 - 407\\
94 010 CRETEIL Cedex, France}
\email{antoine.marnat@u-pec.fr}

\author[Moshchevitin]{Nikolay Moshchevitin}
\address{Institut für Diskrete Mathematik und Geometrie\\
Technische Universität Wien\\
Freihaus, Wiedner Hauptstraße 8, A-1040, Wien, Österreich}
\email{nikolai.moshchevitin@tuwien.ac.at, moshchevitin@gmail.com}

\begin{abstract}

We consider the classical analytic linearization problem for vector fields on the torus $\Tt^d$ close to a constant vector field $\omega$. Our goals are twofold. First, we provide a geometric framework in which the arithmetic condition governing analytic linearization arises naturally from the orbit of a unimodular lattice associated with $\omega$ under a diagonal flow on $\SL(d,\Zz)\backslash \SL(d,\Rr)$. Within this framework, a summability condition emerges
as the natural criterion for convergence. We prove that it is equivalent to several classical formulations of the Brjuno condition for linear forms, including those involving best approximation vectors and switching times of the diagonal flow.
As a byproduct, we obtain a new quantitative linearization theorem with fully explicit estimates. In particular, the loss of analyticity of the conjugacy is controlled by a Brjuno function.

\end{abstract}
\maketitle
\tableofcontents

\section{Introduction}\label{section:Introduction}

The linearization problem is a central theme in quasi-periodic dynamics since it appears in several  contexts, including the existence of Siegel disks, the conjugacy of toral diffeomorphisms to rigid translations, the reducibility of linear Schr\"odinger cocycles and the persistence of invariant tori in nearly integrable Hamiltonian systems. 

A common feature underlying these problems is the reduction to a cohomological equation whose solvability is obstructed by small divisors governed by the arithmetic properties of the frequency vector $\omega$; cf. the pioneering works of Siegel, Kolmogorov, Arnold and Moser ~\cite{Siegel1942,Kolmogorov1954,Arnold1963,Moser1962}.
It is well known that the solvability of these problems  depend crucially on the arithmetic nature of~$\omega$: whereas Diophantine frequencies typically ensure solvability of the associated cohomological equation and lead to analytic linearization through KAM-type and renormalization schemes,
Liouvillean frequencies give rise to severe small divisor obstructions and, in general, prevent such a conjugacy.

%
%
In dimension one, Brjuno~\cite{Brjuno,Brjuno2} identified a number theoretical sufficient condition for analytic linearization (see ~\eqref{eq: BC 1-dim} below), while Yoccoz~\cite{Yoccoz4,Yoccoz2,Yoccoz1995}  further proved its optimality in specific contexts. In higher dimensions, several extensions have been obtained; however, their intrinsic  meaning remains less transparent.

The first purpose of the present work is to provide a geometric framework in which the relevant arithmetic condition arises naturally from the structure of resonances.
Our approach consists of a sequential elimination of Fourier modes by means of near-identity coordinate changes. 
At each step,  say the $n$-th step, one removes  far-from-resonant modes, while retaining a cone of near-resonant modes determined by a parameter $\sigma_n>0$:
\[
K_{\sigma_n}^+(\omega)=\{k\in\Zz^d : |k\cdot\omega|\le \sigma_n |k|\},
\]
where $|\cdot|$ stands for the $\ell_1$-norm. 
The widths $\sigma_n$ of these cones go to zero and control the loss of analyticity in the elimination process.

Writing $\omega=(\alpha,1)\in\Rr^{d-1}\times \Rr$, the key observation is that a good choice of these cones is dictated by the geometry of the orbit of the lattice $\Lambda_\omega=\Zz^d \left[\begin{smallmatrix}I&\alpha^\top\\0&1\end{smallmatrix}\right]$
under the diagonal flow $\Phi^t$ on $\SL(d,\Zz)\backslash\SL(d,\Rr)$. 
More precisely, the size of the shortest vector function
$\delta(\Phi^t(\Lambda_\omega))$ of the lattice $\Phi^t(\Lambda_\omega)$
encodes the arithmetical properties of $\omega$, and the function
\[
\Delta(t)=t+\log\delta(\Phi^t(\Lambda_\omega))
\]
measures the effective width $\sigma(t)$ of resonances at scale $t$.
The function $\Delta$ is piecewise affine, with slopes $0$ and $d$, and its switching times $(\tau_n)_{n\in\Nn}$ determine best approximation vectors of the linear form associated with $\omega$ (see section~\ref{sec:diophantine approx}). 
These times determine the sequence of cone widths in the elimination scheme.  
We observe that in the case $\omega$ is a Diophantine vector (i.e.  $|k\cdot\omega|>C/|k|^{d-1+\beta}$,  $k\in\Zz^{d}\setminus\{0\}$, for some $C>0$ and $\beta\geq0$), the function $\Delta(t)$ grows at least linearly in $t$,  i.e.
$$
\Delta(t)>c+\frac{d}{d+\beta} t
$$ 
for some $c\in\Rr$ (see \cite[Proposition 2.3]{jld10}).
We are interested  in describing conditions on vectors $\omega$ that induce slower growths, corresponding to faster approaches to zero of $\delta(\Phi^{\tau_n}(\Lambda_\omega))$.

In dimension $d=2$, the switching times coincide with those of the classical continued fraction algorithm of $\alpha$ (where $(q_n)_{n\geq0}$ are the denominators of the rational convergents), and the condition reduces to the usual one-dimensional Brjuno condition
\begin{equation}\label{eq: BC 1-dim}
\sum_{n\geq0}\frac{\log q_{n+1}}{q_n} <\infty.
\end{equation}
This is a particular case of the original Brjuno multidimensional condition: we say that $\omega$ is Brjuno if 
$$
O_\omega:=\sum_{j\geq0}\frac1{2^j}
\abs{\log\min_{0<|k|<2^j}|k\cdot\omega|} <\infty,
$$
where $k$ is taken over $\Zz^d$.

In higher dimensions, the behaviour of the diagonal flow replaces the classical continued fraction expansion and provides a multidimensional analogue.  
To explain this precisely,  we now introduce two quantities related to the function $\Delta$:
$$
I_\omega:=\int_0^\infty e^{-\Delta(t)}\,dt,
\qquad
B_\omega:=\sum_{n\geq0}e^{-\Delta(\tau_n)}\tau_{n+1}.
$$
The first quantity has a natural dynamical interpretation in terms of the diagonal flow, while the second expresses the same properties through the sequence of switching times.  

Let $(p_n)_{n\geq0}$ be the sequence of best approximation vectors to the linear form $\omega$ (see section~\ref{sec:diophantine approx}).
We thus define yet another quantity related to the arithmetical properties of $\omega$, namely
\[
 L_{\omega}:=\sum_{n\geq 0}\frac{|\log |p_n\cdot\omega||}{|\hat p_n|}.
\]


In  Theorem~ \ref{prop:Br2}  we prove the relation between these different quantities, and discuss their relation with
best simultaneous Diophantine approximations.
As a consequence, we obtain the following characterization of Brjuno vectors.

\begin{theorem}
$\omega$ is Brjuno iff 
$I_\omega<\infty$ iff $B_\omega<\infty$ iff $L_\omega<\infty$.
\end{theorem}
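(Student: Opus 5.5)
We will prove the chain of equivalences by comparing each quantity to $I_\omega$, using the explicit piecewise affine structure of $\Delta$. First we fix the dictionary between the flow and best approximations. Take $\Phi^t=\diag(e^{t},\dots,e^{t},e^{-(d-1)t})$ acting so that a vector $k\in\Zz^d$ corresponds in $\Phi^t(\Lambda_\omega)$ to a point whose last coordinate is $e^{-(d-1)t}(k\cdot\omega)$ and whose first coordinates are $e^{t}\hat k$. Here $\hat k$ denotes the first $d-1$ coordinates of $k$ (for $k$ realising the minimum, $|k|\asymp|\hat k|$). Then, up to multiplicative constants depending only on $d$,
\[
\delta(\Phi^t\Lambda_\omega)\asymp\min_n\max\bigl(e^{t}|\hat p_n|,\;e^{-(d-1)t}|p_n\cdot\omega|\bigr),
\]
and the minimum is attained by a best approximation vector. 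Consequently $e^{-\Delta(t)}\asymp e^{-t}/\delta$ is piecewise of the form $1/|\hat p_n|$ (where $\Delta$ has slope $0$) or $e^{-dt}/|p_n\cdot\omega|$ (where $\Delta$ has slope $d$). The switching times $\tau_n$ satisfy, up to additive constants,
\[
e^{d\tau_n}\asymp \frac{|\hat p_n|}{|p_n\cdot\omega|}\quad\text{or}\quad e^{d\tau_{n}}\asymp\frac{|\hat p_{n+1}|}{|p_{n}\cdot\omega|},
\]
depending on the indexing convention. I will take these facts from section~\ref{sec:diophantine approx}.

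\emph{$I_\omega\asymp B_\omega$ (up to additive constants).} Split $\int_0^\infty$ over the intervals $[\tau_n,\tau_{n+1}]$. On each interval $\Delta$ consists of one flat piece followed by one piece of slope $d$. The flat part contributes at most $e^{-\Delta(\tau_n)}(\tau_{n+1}-\tau_n)$. The sloped part contributes at most $e^{-\Delta(\tau_n)}/d$, and since $\Delta$ is nondecreasing these terms are summable against $\sum e^{-\Delta(\tau_n)}$. This last sum is finite because $\Delta(\tau_n)$ grows at least like $\tfrac{1}{d-1}\log|\hat p_n|$ and $|\hat p_n|$ grows geometrically (the classical lemma on best approximations). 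Hence $I_\omega\lesssim B_\omega$. The converse needs $\sum e^{-\Delta(\tau_n)}\tau_n<\infty$ given $I_\omega<\infty$, i.e.\ replacing $\tau_{n+1}-\tau_n$ by $\tau_{n+1}$. I would use summation by parts,
\[
\sum_n e^{-\Delta(\tau_n)}\tau_{n+1}=\sum_n(\tau_{n+1}-\tau_n)\sum_{m\ge n}e^{-\Delta(\tau_m)}+\dots,
\]
together with the geometric decay of $e^{-\Delta(\tau_m)}$ along a subsequence (every $d$ or so steps $|\hat p|$ at least doubles). This gives $\sum_{m\ge n}e^{-\Delta(\tau_m)}\lesssim e^{-\Delta(\tau_n)}$. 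I expect this to be the main technical obstacle, since the decay only holds along blocks, so I would group indices into blocks of bounded length.

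\emph{$B_\omega\asymp L_\omega$.} Substitute the formulas above: $e^{-\Delta(\tau_n)}\asymp 1/|\hat p_{n}|$ (or with $n+1$), and
\[
\tau_{n+1}\asymp\frac1d\bigl(\log|\hat p_{n+1}|+|\log|p_{n}\cdot\omega||\bigr).
\]
Since $|p_n\cdot\omega|\le|\hat p_{n+1}|^{-(d-1)}$ by Minkowski, the $\log|\hat p_{n+1}|$ term is dominated by $|\log|p_n\cdot\omega||$. Index shifts are handled by the same block-geometric argument, which gives $L_\omega<\infty\iff B_\omega<\infty$.

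\emph{$O_\omega\asymp I_\omega$.} Dyadically, $\min_{0<|k|<2^j}|k\cdot\omega|=|p_n\cdot\omega|$ for the largest $n$ with $|p_n|<2^j$. Hence
\[
O_\omega=\sum_n|\log|p_n\cdot\omega||\sum_{j:\,|p_n|<2^j\le|p_{n+1}|}2^{-j}\asymp\sum_n\frac{|\log|p_n\cdot\omega||}{|p_n|},
\]
and $|p_n|\asymp|\hat p_n|$. This equals $L_\omega$ up to constants, which closes the cycle.
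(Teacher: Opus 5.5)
\paragraph{The direction $I_\omega<\infty\Rightarrow B_\omega<\infty$ is not established.} Your comparisons $B_\omega\asymp L_\omega$ (via Minkowski/Dirichlet) and $O_\omega\asymp L_\omega$ (via dyadic blocks) are essentially parts (2) and (1) of Theorem~\ref{prop:Br2}, and your bound $I_\omega\lesssim B_\omega$ is correct. The gap is the converse, which is the only place your cycle leaves $I_\omega$.

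You say the converse ``needs'' only $\sum_n e^{-\Delta(\tau_n)}\tau_n<\infty$. That presupposes $\sum_n e^{-\Delta(\tau_n)}(\tau_{n+1}-\tau_n)\lesssim I_\omega$, but your splitting only bounds the integral from above. In fact
\[
\int_{\tau_n}^{\tau_{n+1}}e^{-\Delta(t)}\,dt=e^{-\Delta(\tau_n)}(s_n-\tau_n)+\frac1d\bigl(e^{-\Delta(\tau_n)}-e^{-\Delta(\tau_{n+1})}\bigr),
\]
so $I_\omega$ sees only the flat lengths $s_n-\tau_n$. The slope-$d$ piece has length $\tau_{n+1}-s_n=\frac1d\log(|\hat p_{n+1}|/|\hat p_n|)$, which can be arbitrarily large, yet it contributes at most $\frac{1}{d|\hat p_n|}$ to the integral. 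So the termwise comparison fails, and nothing in your argument controls $\sum_n\log(|\hat p_{n+1}|/|\hat p_n|)/|\hat p_n|$.

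The step you flag as the main obstacle is actually the easy one. Your summation by parts is an exact identity, and $\sum_{m\ge n}1/|\hat p_m|\le 2g_{d-1}/|\hat p_n|$ follows at once from Lemma~\ref{lem:egr}.

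\paragraph{How to close it.} The missing ingredient is Minkowski's bound $\Delta(t)\le t+\log(d-1)$ (Remark~\ref{rem:Minkowski}): the sloped pieces occupy only a fraction of order $1/d$ of the time. Write $\tau_{n+1}=F_n+R_n$ with $F_n=\sum_{k\le n}(s_k-\tau_k)$ and $R_n=\sum_{k\le n}(\tau_{k+1}-s_k)=\Delta(\tau_{n+1})/d$. Minkowski at $t=\tau_{n+1}$ gives $(d-1)R_n\le F_n+\log(d-1)$, hence $\tau_{n+1}\le\frac{d}{d-1}F_n+\frac{\log(d-1)}{d-1}$. Now run your summation by parts on $F_n$ instead of $\tau_{n+1}$:
\[
\sum_{n\ge0}\frac{F_n}{|\hat p_n|}=\sum_{k\ge0}(s_k-\tau_k)\sum_{n\ge k}\frac{1}{|\hat p_n|}\le 2g_{d-1}\Bigl(I_\omega-\frac1d\Bigr).
\]
This yields $B_\omega\le\frac{2dg_{d-1}}{d-1}\bigl(I_\omega-\frac1d\bigr)+\frac{2g_{d-1}\log(d-1)}{d-1}$. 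Since $dF_n=|\log|p_n\cdot\omega||$, this is exactly the paper's route. Lemma~\ref{lem:Abel}, combined with Lemma~\ref{formula:integralDelta}, gives $L_\omega\lesssim I_\omega$, and Lemma~\ref{lem:Dirichlet} gives $B_\omega\lesssim L_\omega$.

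\paragraph{Two minor points.}
\begin{enumerate}
\item Your flow $\diag(e^{t},\dots,e^{t},e^{-(d-1)t})$ has the wrong sign: with it, the lattice vector coming from $e_d$ forces $\delta\to0$. The formulas you then use are those of the paper's $E^t$, so this is only a slip.
\item In the last step, the inner dyadic sum is empty whenever no power of $2$ lies in $(|p_n|,|p_{n+1}|]$. So $L_\omega\lesssim O_\omega$ is not termwise. It needs the grouping of at most $g_{d-1}$ indices per dyadic block, together with the monotonicity of $|\log|p_n\cdot\omega||$, as in the paper's proof of part (1).
\end{enumerate}
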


Moreover, we show that the set of vectors satisfying a condition obtained by
Golse and Lochak~\cite{GoLo}, in a previous attempt to characterize the Brjuno condition for $d\geq3$ through Diophantine approximations, is strictly larger than the set of Brjuno vectors (Section~\ref{sec:non BC}). In fact, the Hausdorff dimension of the difference is exactly $d-2$ (Theorem~\ref{prop: Hausdorff dim}).

\bigskip

A second objective of this work is to establish a quantitative linearization result in a specific setting, building on the geometric framework developed above. 
The problem of analytic linearization of flows on the torus $\Tt^d := \Rr^d / 2\pi \Zz^d$, with $d \geq 2$, provides a simple case of this general phenomenon, and we can state it as follows.
Given a real-analytic vector field $X$ on~$\Tt^d$, one asks whether there exists a real-analytic diffeomorphism that conjugates $X$ to a constant vector field $\omega \in \Rr^d$. 
%
%
%

We can now state precisely our quantitative linearization result.  
Let $f \colon \Tt^d \to \Rr^d$ be real analytic, i.e. $f \in C^\omega(\Tt^d)$, with Fourier coefficients $(f_k)_{k \in \Zz^d}$. 
Given $\rho > 0$, define the norms
$$
|f|_\rho := \sum_{k \in \Zz^d} |f_k| e^{\rho |k|}
\quad \text{and} \quad
|f|'_\rho := \sum_{k \in \Zz^d} |f_k| (1 + |k|) e^{\rho |k|}.
$$
The parameter $\rho$ controls the exponential decay of the Fourier coefficients, corresponding to the width of the analyticity strip around $\Tt^d$.
Recall that a diffeomorphism $h$ of $\Tt^d$ conjugates  $f$ to the vector field $h^*f:=(Df)^{-1}f\circ h$.
 

\begin{theorem}\label{thm: main}
Let $\omega = (\alpha,1) \in \Rr^d$, $\rho > 0$, and $X \in C^\omega(\Tt^d)$. Assume that
\begin{enumerate}
\item $\omega$ belongs to  the rotation set of $X$,
\item $r:=\rho -( a + b B_\omega)>0$, where $a =  1+ 8\log(2^{10}|\omega|)(2 \cdot 5^{d-1} + 1)$ and $b = 6d$,
\item $|X-\omega|'_\rho \le \frac{1}{496 |\omega|}$.
\end{enumerate}
Then, $X$ is analytically conjugate to $\omega$, that is, there exists a real analytic diffeomorphism $H$ of $\Tt^d$ such that $H^*X=\omega$, and
$$
|H - \id|_r \le 22\bigl(2 \cdot 5^{d-1} + 1\bigr)\, |X-\omega|'_\rho.
$$
\end{theorem}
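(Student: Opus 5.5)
The plan is to run the renormalization-type elimination scheme described in the introduction, with the cone widths $\sigma_n$ read off from the switching times $\tau_n$ of $\Delta$. At step $n$ we are given a vector field $X_n=\omega+f_n$ on a strip of width $\rho_n$. We split $f_n$ into its near-resonant part, supported on the cone $K^+_{\sigma_n}(\omega)$, and its far-from-resonant part $\Ii^-f_n$. We then look for a near-identity change of coordinates $h_n=\id+\varphi_n$ that solves the cohomological equation $\partial_\omega\varphi_n=\Ii^-f_n$ up to quadratic terms. Off the cone we have $|k\cdot\omega|>\sigma_n|k|$. Each Fourier mode is therefore divided by at most $\sigma_n|k|$, and the loss is of order $\sigma_n^{-1}$ with no further loss of analyticity from the divisors themselves. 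The analyticity is lost elsewhere: the modes in the cone with $|k|$ below the next scale must already be absent or small, and this is where the lattice geometry enters.

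The key lattice input is as follows. With $\sigma_n\approx e^{-\Delta(\tau_n)}$ (times harmless constants), every $k\in K^+_{\sigma_n}(\omega)$ with $0<|k|\lesssim e^{\tau_{n+1}}$ is a multiple of a single primitive vector, essentially the best approximation $p_n$. If $|k|$ is larger, the cone mode is exponentially small after truncating the strip. Concretely, I would use the shortest-vector interpretation of $\delta(\Phi^t\Lambda_\omega)$ together with the piecewise-affine structure of $\Delta$ (slopes $0$ and $d$) to show that on $[\tau_n,\tau_{n+1}]$ the cone modes of norm at most $e^{t}$ are controlled. The number $2\cdot5^{d-1}+1$ should count the relevant lattice points or successive minima in a box; I expect it to appear from a Minkowski/Hermite-type bound on the number of short vectors. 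Removing modes up to size $e^{\tau_{n+1}}$ with divisor $\sigma_n$ then costs a strip reduction $\rho_n-\rho_{n+1}\approx C\,\sigma_n\cdot(\text{log-size})$. Summing, this gives $\sum_n e^{-\Delta(\tau_n)}\tau_{n+1}=B_\omega$ times $b=6d$, plus the constant $a$ coming from the first steps and the $\log|\omega|$ normalisation. Hypothesis (2) guarantees that the limiting width is $r>0$.

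The resonant part that remains must be dealt with. Here hypothesis (1), that $\omega$ lies in the rotation set of $X$, is used: the constant (mean) term can be tuned or killed, since a nonzero average drift would contradict $\omega$ being a rotation vector. The resonant modes along $p_n$ are pushed to higher frequency as $n$ grows and disappear in the limit. Each step gives a quadratic estimate of the form $|f_{n+1}|'_{\rho_{n+1}}\le C\sigma_n^{-1}|f_n|'^2_{\rho_n}\cdot(\ldots)$. Combined with the smallness hypothesis (3), the constant $496|\omega|$, this yields superexponential convergence. The composition $H=\lim h_0\circ\cdots\circ h_n$ then converges in $|\cdot|_r$, with $|H-\id|_r$ bounded by a geometric sum dominated by the first step, giving the constant $22(2\cdot5^{d-1}+1)$.

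The main obstacle is the lattice counting step: proving that below scale $e^{\tau_{n+1}}$ the cone $K^+_{\sigma_n}$ contains only a controlled, essentially one-dimensional family of modes, with explicit constants. A second delicate point is checking that the induction closes, namely that the quadratic error $\sigma_n^{-1}|f_n|^2$ stays small even when $\sigma_n$ is tiny. I would first try choosing the truncation so that the error decreases like $e^{-2^n}$ while $\sigma_n^{-1}$ grows at most like $e^{\Delta(\tau_n)}$, and absorb it using $\tau_{n+1}\ge\Delta(\tau_n)/d$.
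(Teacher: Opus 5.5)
Your proposal has two genuine gaps: the lattice input it relies on is false, and the convergence mechanism does not close.

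\textbf{The lattice input.} The claim that cone modes below the scale $e^{\tau_{n+1}}$ are multiples of a single primitive vector is false, and the cone width you attach to it is the wrong one. With $\sigma_n\approx e^{-\Delta(\tau_n)}=1/|\hat p_n|$, Lemma~\ref{lem:Dirichlet} gives $|p_{n-1}\cdot\omega|\le (d-1)!\,|\hat p_n|^{1-d}\le c\,|p_{n-1}|/|\hat p_n|$ for any fixed $c>0$ once $n$ is large. So $p_{n-1}$, which is not collinear with $p_n$, lies in your cone well below $e^{\tau_{n+1}}$. For $d\ge3$ earlier $p_m$ can enter as well, so the smallest nonzero cone frequency need not be comparable to $|\hat p_n|$, and the strip-loss bookkeeping collapses. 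Your width exceeds the one that works, $e^{-d\tau_n}$, by the factor $e^{d\tau_n-\Delta(\tau_n)}=1/|p_{n-1}\cdot\omega|\to\infty$. That is not a harmless constant. Even with $\sigma_n=e^{-d\tau_n}$ there is no one-dimensional structure: for $d=2$, $p_n+p_{n-1}\in K^+_{\sigma_n}$ with norm $\lesssim e^{\tau_{n+1}}$.

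None of this structure is needed. The missing idea is a direct consequence of the definition of $\delta$. If $\sigma(t)=e^{-dt}$ and $k\in K^+_{\sigma(t)}$, then $\|k^\top M_\omega E^t\|_*\le 2e^{-t}|k|$, so every nonzero cone mode has $|k|\ge e^{\Delta(t)}/2$ (Lemma~\ref{lem:estimate ell}).

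This fact pays off only if each step removes \emph{all} far-from-resonant modes exactly, not just to first order. Theorem~\ref{thm unif} does this by a homotopy $\GG(u_t)=(1-t)\GG(0)$, yielding $(\id+u)^*X\in\Kk^+_\sigma\A_\rho$. Every non-constant mode left then has frequency at least $e^{\Delta_{n-1}}/2$, and shrinking the strip by $\ell_{n-1}\phi$ gains $e^{-\phi}$ (Lemma~\ref{lem:norm improvement}). The mean is not tuned. Instead, $\omega\in\rot(X)$ gives $|\Ee X-\omega|\le\|X-\Ee X\|_{\infty,1}\le|X-\Ee X|_\rho$ (Lemma~\ref{lem:omega minus constant}), and $\rot$ is preserved by the conjugacies (Lemma~\ref{invariant rot}).

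\textbf{The convergence mechanism.} A step solving $\partial_\omega\varphi_n=\Kk^-_{\sigma_n}f_n$ leaves $\Kk^+_{\sigma_n}f_n$ in $f_{n+1}$, and that term is linear in $f_n$. So $|f_{n+1}|\le C\sigma_n^{-1}|f_n|^2$ cannot hold as stated. Suppose instead you push the resonant part down at rate $e^{-2^n}$ by shrinking the strip. Since the mode $p_n$ lies in the cone, the loss per step is of order $2^ne^{-\Delta_n}$. The sum $\sum_n 2^ne^{-\Delta_n}$ diverges for the golden mean in $d=2$, where $|\hat p_n|\asymp\varphi^n$ with $\varphi<2$, even though that frequency is Diophantine and hence Brjuno.

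The paper's scheme is linear. To perform the next elimination one only needs $|X_n-\omega|'_{\rho_n+\nu_n}\le\varepsilon_n\asymp\sigma_n^2/|\omega|$, so a contraction by $\varepsilon_n/\varepsilon_{n-1}$ suffices (Lemma~\ref{lem:ren}). Including the derivative loss $(1+\nu_n)/\nu_n$, this requires $\phi_n\le\log(2^{10}|\omega|)+3d\tau_n$. The resulting cost is $2\nu_n+\ell_{n-1}\phi_n\le 4\log(2^{10}|\omega|)e^{-\Delta_{n-1}}+6d\,e^{-\Delta_{n-1}}\tau_n$, which is exactly what sums to $a+bB_\omega$; the $1$ in $a$ is $\nu_0$.

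Finally, $2\cdot5^{d-1}+1$ does not count cone modes. It comes from the exponential growth $|\hat p_{n+g_{d-1}}|\ge2|\hat p_n|$ (Lemma~\ref{lem:egr}), which gives $\sum_n e^{-\Delta_n}\le 2g_{d-1}$. That bound produces the constant in $a$. Combined with $|u_{n-1}|'_{\rho_{n-1}}\le \frac{8}{\sigma_{n-1}}\frac{\varepsilon_{n-1}}{\varepsilon_0}|X_0-\omega|'_{\rho_0+\nu_0}$ and $\varepsilon_{n-1}/\sigma_{n-1}\le e^{-\Delta_{n-1}}/(384|\omega|)$, it also gives the factor $22g_{d-1}$ in $|H-\id|_r$.
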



Notice that it is only required that the vector field $X$ has $\omega$ inside its rotation set (see section~\ref{sec:rotation vector} for the definition). However, as the conjugacy to $\omega$ holds, this vector is actually the only element of the rotation set.

We also would like to highlight the fact that we have an explicit definition of the amplitude of the perturbation.
On the other hand, the size of the analyticity parameter $\rho$ depends on the value of the function $B_\omega$, which, in general, is hard to compute.
Large values of $B_\omega$ imply necessarily large values of $\rho$, which imposes strong restrictions on the allowed vector fields around $\omega$; thus making the theorem a local result.
It is however possible that a global result could be obtained in a situation that it is possible to reduce to the  local one.

In another perspective, a quantitative result can be used to show linearization of $C^k$, $C^\infty$ and Gevrey flows using approximations by analytic maps, in the spirit of Moser~\cite{Moser1966II} (see also~\cite{Popov2004,Bounemoura2022,chierchia2003kam}).

The proof of Theorem~\ref{thm: main} improves methods in the context of renormalization and extends results that appeared in e.g.~\cite{jld5,jld10,DiasGaivao2019} and references therein.
The theorem should also be compared with the  approach of R\"ussmann and P\"oschel using KAM theory. In \cite{Poschel2010KAMR}, P\"oschel proved a linearization result in this context using a new variant of KAM theory,  introduced by R\"ussmann, which is based on a slowly converging iteration scheme.  The Brjuno condition  is formulated in terms of a R\"ussmann approximation function $R\colon[1,\infty)\to[1,\infty)$
satisfying
$$
\int_1^\infty \frac{\log R(t)}{t^2}\,dt<\infty,
$$
and the condition on  $\omega$ is given by
$$
|k\cdot\omega|\ge \frac{C}{R(|k|)},
\qquad 
k\in\Zz^d\setminus\{0\},
$$
for some constant $C>0$.
The resulting loss of analyticity is then controlled by the tail integral
$$
\int_\tau^\infty \frac{\log (t R(t))}{t^2}\,dt.
$$

Our result differs in several respects.  First, our method is of a different nature: instead of solving successive homological equations with prescribed small-divisor estimates, we eliminate Fourier modes according to a sequence of resonant cones whose widths are determined dynamically by the orbit of the associated unimodular lattice under the diagonal flow.
As a consequence, no approximation function is prescribed a priori, and the relevant quantity controlling the loss of analyticity is the Brjuno function $B_\omega$, defined intrinsically from this orbit. The Brjuno condition then emerges naturally as the summability condition governing the cumulative loss of analyticity, and the scheme yields explicit quantitative estimates for the conjugacy.
Finally, the analyticity loss depends linearly on $B_\omega$, with constants depending only on the dimension $d$ and $|\omega|$,  providing an explicit quantitative control of the conjugating transformation. This type of estimate can be compared with classical bounds on the convergence radius in the Poincaré--Siegel problem (see for instance \cite{GM10} and references therein).

The problem of determining the optimal arithmetic condition for analytic linearization remains largely open in $d\ge3$. 
Recent developments suggest that weaker than Brjuno conditions may suffice in certain contexts. 
For instance,  Bounemoura discusses the optimal conditions for the linearization of Gevrey vector fields~\cite{Bounemoura2022}, while Argentieri and Corsi recently announced a weaker-than-Brjuno condition for analytic torus diffeomorphisms~\cite{ArgentieriCorsi2024}.

\medskip

The paper is organized as follows. 
Section~\ref{section:Preliminaries} introduces basic notations. 
Section~\ref{sec:diophantine approx} develops the Diophantine approximations arising from the diagonal flow and establishes the equivalence of several Brjuno-type conditions. 
The subsequent section implements the analytic elimination scheme and prove linearization under the Brjuno condition.
In the appendices we include properties of analytic function spaces and of lattices.

%
%
%

\section{Preliminaries}\label{section:Preliminaries}

Let $d\geq2$. We set the notations $\Nn$ for the positive integers and $\Nn_0:=\Nn\cup\{0\}$ for the non-negative ones. 
The $\ell_1$-norm on $\Cc^d$ is denoted by
$$
|v|:=\sum_{i=1}^d|v_i|.
$$ 
The canonical inner product between vectors $u,v\in\Cc^d$ is given by 
$$
u\cdot v:=\sum_iu_iv_i
$$ 
and it satisfies
$
|u\cdot v| \leq |u|\,|v|.
$

Define the norm
$$
\|v\|_*:=\max\{ |\hat v|,|v_d|\}
$$
where  $v=(\hat v,v_d)\in\Cc^d$ with 
$$
\hat v=(v_1,\dots,v_{d-1})\in\Cc^{d-1}.
$$

%

Let $C^r(\Tt^d,\Rr^d)$ with $r\in\Nn_0\cup\{\infty,\omega\}$ denote the set of $C^r$ vector-valued functions on the $d$-torus \( \Tt^d := \Rr^d / 2\pi\Zz^d \). Their lifts to the universal cover give $2\pi\Zz^d$-periodic $C^r$ vector-valued functions.  
Given any $f\in C^r(\Tt^d,\Rr^d)$, its Fourier coefficients are
$$
f_k=\frac{1}{(2\pi)^d}\int_{\Tt^d}f(x)e^{-ik\cdot x}\,dx,\quad k\in\Zz^d,
$$
and write the constant Fourier coefficient of $f$ through the projection 
$\Ee \colon f\mapsto f_0$.
For the sup-norm of $f$ we mean $\|f\|_{C^0}:=\sup_{x\in\Tt^d}|f(x)|$. 

We will use multi-index notation. So given $\alpha=(\alpha_1,\ldots,\alpha_d)\in\Nn_0^d$ we write
$$
\alpha!=\alpha_1!\cdots \alpha_d!\,,\quad |\alpha|=\alpha_1+\cdots+\alpha_d\quad\text{and}\quad \partial^\alpha=\partial^{\alpha_1}_{x_1}\cdots\partial^{\alpha_d}_{x_d}
$$
for the partial derivatives.

\section{Diophantine approximations}
\label{sec:diophantine approx}

	Consider the homogeneous space $\SLZ\backslash \SLR$. This space is identified with the space of $d$-dimensional unimodular lattices, which means that for $M\in \SLR$, we identify the lattice $\Zz^d M=\{k^\top M:\veck\in\Zz^d \}$ with the coset $\SLZ M$.
	Consider the right action of the one-parameter subgroup
	$$
	E^t=\diag(\e^{-t}, \dots, \e^{-t}, \e^{(d-1)t}) \in \SLR
	$$
	that generates the flow
	\begin{equation}\label{def geod flow}
		\Phi^t\colon \ \SLZ\backslash \SLR \to\SLZ\backslash \SLR,
		\qquad
		\Zz^d M\mapsto \Zz^d M E^t.
	\end{equation}
	This flow is known to be ergodic~\cite{Bekka2000}. 
	In the following we will be interested in the properties of one particular orbit: $\Phi^t(\Lambda_\omega)$, $t\geq 0$, where the vector $\omega=(\alpha,1)\in\Rr^d$ is given and
	 \begin{equation}
		M_{\omega}=
		\begin{pmatrix}
			I & \alpha^\top \\
			0 & 1
		\end{pmatrix} \text{ and } \Lambda_{\omega}=\Zz^d M_{\omega}.
	\end{equation}

	The size of the shortest non-zero vector in a lattice $\Zz^dM\in \SLZ\backslash \SLR$ is given by the function  
	\begin{equation}
		\delta\colon  \SLZ\backslash\SLR\to\Rr^+,
		\qquad
		\delta(\Zz^dM)=\inf_{\veck\in\Zz^d\setminus \{0\}} \| k^\top M\|_*.
	\end{equation}
		The forward orbit $\Phi^t(\Lambda_{\omega})$, for $t \geq 0$, encodes all the Diophantine properties of the linear form $\Rr^d\ni x\mapsto \omega\cdot x$ associated with $\omega$ that we require. Key information is captured by the functions
	\[
	W,\Delta\colon[0,+\infty) \to \Rr, \qquad W(t) = \log \delta(\Phi^t(\Lambda_{\omega})), \quad \Delta(t) = t + W(t).
	\]
	These functions can be expressed in terms of the sequence of best approximations to the linear form associated with $\omega$.
	
	\subsection{Best Approximations}
	
	Recall that $\Rr^{d-1}$ is endowed with the $\ell^1$-norm and $\Rr^d$ with the norm $\|v\|_* = \max(|\hat v|, |v_d|)$.
	
	\begin{defn}
		Let $\omega = (\alpha,1) \in \Rr^d$. A vector $p \in \Zz^d$ is called a \emph{best approximation vector} to the linear form $\omega$ if $|\hat p| > 0$ and for all $b \in \Zz^d$, the following holds:
		\[	
		\left\{\begin{array}{ll}
			0 < |\hat b| < |\hat p| \Rightarrow |b \cdot \omega| > |p \cdot \omega|,\\
			|\hat b| = |\hat p| \Rightarrow |b \cdot \omega| \geq |p \cdot \omega|.
		\end{array}
		\right.
		\]
		The  projection $\hat p\in\Zz^{d-1}$ is referred to as \emph{the short best approximation vector} associated with $p$ and $|p \cdot \omega|$ is referred to as the \emph{approximation error}.
	\end{defn}
	Given $\omega =(\alpha,1)\in \mathbb{R}^d$,  for each possible value of the norm $|\hat{p}|$ of a short best approximation vector to $\omega$, we select a best approximation vector of that norm. We then order these vectors increasingly according to the norm of their associated short vectors. This procedure yields a sequence $(p_n)_{n\geq 0}$ such that
	\[
	|\hat p_0| = 1 < |\hat p_1| < \dots < |\hat p_n| < \dots
	\]
	Although the sequence $(p_n)_n$ is not uniquely defined, the sequences $(|\hat{p}_n|)_n$ and $(|p_n \cdot \omega|)_n$ are uniquely determined. These sequences are infinite if and only if the coordinates of $\omega$ are linearly independent over $\mathbb{Q}$.  Such $\omega$'s are known as \textit{rationally independent}.


Best approximations to linear forms naturally appear in the analysis of the function $W$ (see Lemma \ref{lem:W} below) and in the estimations of Birkhoff sums of toral translations via Fourier coefficients. By contrast, simultaneous best approximations do not.  
	However for comparison with the Brjuno-type condition introduced by Golse and Lochak~\cite{GoLo}, we  recall the notion of best simultaneous approximations.
	
	\begin{defn}
		Let $\omega = (\alpha,1) \in \Rr^d$.
		An integer vector $q \in \Zz^d$ is a \emph{best simultaneous approximation vector} to $\omega$ if $q_d > 0$ and for all $b \in \Zz^d$, the following holds:
		\[
		\left\{\begin{array}{ll}
			0 < b_d < q_d \Rightarrow |b_d\alpha - \hat b| > |q_d\alpha - \hat q|, \\
			b_d = q_d \Rightarrow |b_d\alpha - \hat b| \geq |q_d\alpha - \hat q|.
		\end{array}
		\right.
		\]
		The integer $q_d$ is called the \emph{best approximation denominator}, and $|q_d\alpha - \hat q|$ is the corresponding \emph{approximation error}.
	\end{defn}
	
	Given $\omega=(\alpha,1)\in \Rr^d$, for each possible value of the denominator $q_d$ of a best simultaneous approximation vector to $\omega$, we select a best approximation vector with that denominator and order these vectors increasingly according to their denominators. This yields a sequence $(q_n)_{n\geq0}$ satisfying:
	\[
	q_{0,d} = 1 < q_{1,d} < \dots < q_{n,d} < \dots
	\]
	This sequence is finite if and only if $\alpha \in \mathbb Q^{d-1}$. 
	
	Typically, one refers to best approximations to $\alpha$ rather than to $\omega$. However, in the context of  flows on the torus, it is more natural to consider $\omega$.

	\subsection{Local maxima and  minima of the function $W$}
	Given $\omega=(\alpha,1)\in\Rr^d$, let $(p_n)_{n\geq0}$ be the sequence of best approximation vectors to the linear form $\omega$.
	\begin{lemma}\label{lem:W}
		The function $W\colon[0,+\infty)\to\Rr$ is continuous piecewise affine with slopes $-1$ and $d-1$:
		\[
		W(t) =
		\begin{cases}
			-t + \log|\hat p_n|, & t \in [\tau_n, s_n]\\
			(d-1)t + \log|p_n \cdot \omega|, & t \in [s_n, \tau_{n+1}],
		\end{cases}
		\]
		where the switching points $\tau_n$ are the local maxima and the $s_n$ are the local minima,  given by
\begin{eqnarray*}
\tau_0&=&0,\\
\tau_n &=& \frac{1}{d} \log \left( \frac{|\hat p_n|}{|p_{n-1} \cdot \omega|} \right),
\quad
n\geq 1, \\
s_n &=& \frac{1}{d} \log \left( \frac{|\hat p_n|}{|p_n \cdot \omega|} \right),
\quad
n\geq 0,
\end{eqnarray*}
		and satisfy
		\[
		 \tau_0 < s_0 < \tau_1 < \dots < \tau_n < s_n < \tau_{n+1} < \dots
		\]
	\end{lemma}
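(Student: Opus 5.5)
The plan is to express $W(t)$ as the logarithm of a minimum over integer vectors. For each $k\in\Zz^d\setminus\{0\}$ we have $k^\top M_\omega = (\hat k, k\cdot\omega)$, so
\[
\|k^\top M_\omega E^t\|_* = \max\{\e^{-t}|\hat k|,\ \e^{(d-1)t}|k\cdot\omega|\}.
\]
Hence $W(t)=\log\min_k f_k(t)$, where
\[
f_k(t)=\max\{\e^{-t}|\hat k|,\e^{(d-1)t}|k\cdot\omega|\}.
\]
In log scale each $\log f_k$ is the maximum of two affine functions: one of slope $-1$, one of slope $d-1$. It is a V-shape with minimum at
\[
s(k)=\tfrac1d\log(|\hat k|/|k\cdot\omega|).
\]
Vectors with $\hat k=0$ give $f_k(t)=\e^{(d-1)t}|k_d|\ge \e^{(d-1)t}$. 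Vectors with $|\hat k|>0$ and $|k\cdot\omega|\ge1$ are also dominated by this. So for $t\ge0$ the relevant competitors are essentially the best approximations. This needs a short check at $t$ near $0$, where $p_0$ with $|\hat p_0|=1$ and $|p_0\cdot\omega|\le 1/2$ gives $f_{p_0}(t)\le\e^{-t}$ on the region that matters.

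The first step is a reduction to best approximations. If $k$ is not (equivalent to) a best approximation, choose a best approximation $p_n$ with $|\hat p_n|\le|\hat k|$ and $|p_n\cdot\omega|\le|k\cdot\omega|$. Such a $p_n$ exists by taking the best approximation of largest norm at most $|\hat k|$, using the defining property. Then $f_{p_n}\le f_k$ pointwise, so
\[
W(t)=\log\min_n f_{p_n}(t).
\]
Since $|\hat p_n|$ increases strictly in $n$ and $|p_n\cdot\omega|$ decreases strictly (by the definition of best approximation), the V-shapes have strictly increasing left branches $\log|\hat p_n|-t$. Their right branches $(d-1)t+\log|p_n\cdot\omega|$ are strictly decreasing in $n$.

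The second step computes the lower envelope. For consecutive indices, the right branch of $p_{n-1}$ meets the left branch of $p_n$ where $(d-1)t+\log|p_{n-1}\cdot\omega|=\log|\hat p_n|-t$, that is, at $t=\tau_n$ as stated. The minima of the V-shapes lie at $s_n$. One checks that $s_{n-1}<\tau_n<s_n$, which is immediate from $|\hat p_{n-1}|<|\hat p_n|$ and $|p_n\cdot\omega|<|p_{n-1}\cdot\omega|$. Then for $t\in[\tau_n,\tau_{n+1}]$ the minimum over all $m$ is attained at $m=n$:
\begin{itemize}
\item for $m<n$, the right branch of $p_m$ is at least that of $p_{n-1}$, which dominates $f_{p_n}$ for $t\ge\tau_n$;
\item for $m>n$, the left branch of $p_m$ is at least that of $p_{n+1}$, which dominates $f_{p_n}$ for $t\le\tau_{n+1}$.
\end{itemize}
This yields the piecewise formula, with continuity automatic. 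The local maxima are the $\tau_n$, where the slope passes from $d-1$ to $-1$, and the local minima are the $s_n$.

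The main obstacle is the reduction step together with the base case. We must justify that every competitor is dominated by some $p_n$, including those with $\hat k=0$. We must also handle $t\in[0,\tau_1]$, confirming $\tau_0=0$. For the latter I would check that on $[0,s_0]$ the value $\e^{-t}|\hat p_0|=\e^{-t}$ is at most $\e^{(d-1)t}$, so vectors with $\hat k=0$ never win for $t\ge0$. Here I take $p_0$ with $|\hat p_0|=1$ and $|p_0\cdot\omega|=\min_{|\hat k|=1}\|k\cdot\omega\|\le1/2$. Ties in the case $|\hat b|=|\hat p|$ are harmless, because only the values $|\hat p_n|$ and $|p_n\cdot\omega|$ enter the formula.
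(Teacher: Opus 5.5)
Your proposal is correct and follows essentially the same route as the paper. You dominate every nonzero lattice vector by a best approximation (vectors with $\hat k=0$ via $p_0$) to get $W(t)=\min_n\max\{-t+\log|\hat p_n|,(d-1)t+\log|p_n\cdot\omega|\}$, then read off the lower envelope, working it out in more detail than the paper's one-line comparison $W_{p_n}\le W_{p_{n+1}}\iff t\le\tau_{n+1}$; the one step you (and the paper) state too quickly is the existence of the dominating $p_n$, which needs a short minimization of $|b\cdot\omega|$ over $0<|\hat b|\le|\hat k|$, not just the defining property of best approximations.
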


	\begin{proof}
		Since lattices are discrete, for each $t$, there exists $u \in \Zz^d$ such that $u^\top M_\omega E^t$ is a shortest vector of the lattice $\Zz^d M_\omega E^t$. Let $u$ be a nonzero vector in $ \Zz^d$. If $\hat u=0$ then $u\cdot\omega=u_d$ and  $\|u^{\top }M_{\omega}E^t\|_*=|u_d|e^{(d-1)t}\geq \max(e^{-t}|\hat p_0|,e^{(d-1)t}|p_0\cdot \omega|)$ for all $t\geq 0$. If $\hat u \neq 0$, there exists a best approximation vector $p_n$ to the linear form associated with $\omega$ such that $|\hat p_n| \leq |\hat u|$ and $|p_n \cdot \omega| \leq |u \cdot \omega|$. Thus
		\[
		\|p_n^{\top} M_\omega E^t\|_* \leq \|u^{\top} M_\omega E^t\|_*.
		\]
		It follows that, for all $t\geq 0$,
		\[
		W(t) = \log \min_{n \geq 0} \|p_n^{\top} M_\omega E^t\|_* = \min_{n \geq 0} \max\left(-t + \log|\hat p_n|, (d-1)t + \log|p_n \cdot \omega|\right).
		\]
		Thus, $W$ is a continuous piecewise affine function with two possible slopes: $-1$ and $d-1$. Now, the lemma follows from the facts that for each $n$, the function $W_{p_n}(t) = \max(-t + \log|\hat p_n|, (d-1)t + \log|p_n \cdot \omega|)$ reaches its minimum at $s_n$, and $W_{p_n}(t) \leq W_{p_{n+1}}(t)$ if and only if $t \leq \tau_{n+1}$.
	\end{proof}

\begin{remark}\label{rem:Minkowski}By Lemma~\ref{lem:W}, $\Delta$ is a continuous piecewise affine function with
slopes $0$ and $d$, satisfying $\Delta(0)=0$. Thus $\Delta(t)\leq dt$ for every $t\geq0$. Moreover, by Minkowski's theorem, $W(t)\leq \log(((d-1)!)^{1/d})\leq \log(d-1)$ for every $t\geq0$. Hence, $\Delta(t)\leq t+\log(d-1)$ for every $t\geq0$.
\end{remark}

\subsection{Brjuno's conditions}
\label{sec:Brjuno conditions}
Let $\Delta_n:=\Delta(\tau_n)$. In this work we require the following Brjuno-type condition to hold:
\[
 B_{\omega}:=\sum_{n\geq 0}e^{-\Delta_n}\tau_{n+1}<\infty;
\]
see Theorem  \ref{thm:infrenormalization}. 
In Proposition \ref{prop:Br1} below, we show that this technical condition is equivalent to the more readable integral condition
\[
\hspace{2cm}I_{\omega}:=\int_{0}^{\infty}e^{-\Delta(t)}dt=\int_{0}^{\infty}\frac1{e^t\delta(\Phi^t(\Lambda_{\omega}))}<\infty,
\] 
as well as to the linear forms Brjuno's condition, 
\[
 L_{\omega}:=\sum_{n\geq 0}\frac{|\log |p_n\cdot\omega||}{|\hat p_n|}<\infty.
\]
The  Brjuno's condition for Fourier series is given by 
\[
 O_{\omega}:=\sum_{j\geq 0}2^{-j}|\log\min\{|k\cdot\omega|:k\in\Zz^d\text{ and }0<|\hat k|<2^j\}|<\infty.
\] 	

\begin{remark}   
    In~\cite{Brjuno,Brjuno2}, the ``Brjuno condition'' is defined as $\sum_{j \ge 0} \frac{|\Omega_j|}{2^j} < \infty$, where
    \[
    \Omega_j = \log \inf \left\{ |Q \cdot \omega| \colon Q \in \mathbb{N}_d, \, Q \cdot \omega \neq 0, \, \sum_{i} Q_i \le 2^j \right\}
    \]
    and $\mathbb{N}_d$ is the set of vectors $Q \in \mathbb{Z}^d$ with non-negative coordinates, except for at most one coordinate which may equal $-1$ provided that the sum of the coordinates is non-negative. Brjuno introduced this condition in his study of analytic differential equations near a singular point. He proved that, under this condition, there exists an analytic change of variables that transforms the system into a normal form. Since  Brjuno's work involves power series rather than Fourier series,  the integer vectors $Q$ are restricted to the subset $\mathbb{N}_d$ of $\mathbb{Z}^d$.   
    This  condition with $\mathbb N_d$ instead of $\mathbb Z^d$ is less restrictive than the condition $O_{\omega} < \infty$. Indeed, if $\omega = (\omega_1, \dots, \omega_{d-1}, 1)$  is such that $O_{\omega} = \infty$, then all vectors $\omega' = (\pm \omega_1, \dots, \pm \omega_{d-1}, 1)$ also satisfy $O_{\omega'} = \infty$, whereas, if all the coordinates of $\omega'$ are positive,  the Brjuno condition mentioned above is satisfied.
\end{remark}

Golse and Lochak showed in~\cite{GoLo} that the condition $ O_{\omega}<\infty$ implies yet another one, involving the sequence $(q_n)_n$ of best simultaneous Diophantine approximations to $\omega=(\alpha,1)$,		
	\[
	S_{\omega}:=\sum_{n\geq 0}\frac{\log |q_{n,d}\alpha-\hat q_n|}{q_{n,d}^{1/(d-1)}}<\infty.
	\]
	
\begin{proposition}\label{prop:Br1}
	For any $\omega=(\alpha,1)\in\Rr^d$, we have the equivalence
	\[
	O_{\omega}<\infty\iff L_{\omega}<\infty\iff I_{\omega}<\infty\iff B_{\omega}<\infty. 
	\] 
	Moreover, there exists a vector $\omega=(\alpha,1)\in\Rr^d$ with rationally independent coordinates such that $S_{\omega}<\infty$ while $L_{\omega}=\infty$, i.e. $L_\omega\subsetneq S_\omega$.
\end{proposition}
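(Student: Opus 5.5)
The plan is to express every quantity through the two sequences $a_n:=|\hat p_n|$ and $e_n:=|p_n\cdot\omega|$, using Lemma~\ref{lem:W}.

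\textbf{Translating $\Delta$ into $a_n,e_n$.} On $[\tau_n,s_n]$ the function $W$ has slope $-1$, so $\Delta$ is constant there and equals $\log a_n$. In particular $\Delta_n=\log a_n$ and $e^{-\Delta_n}=1/a_n$. On $[s_n,\tau_{n+1}]$ the function $\Delta$ has slope $d$ and increases from $\log a_n$ to $\log a_{n+1}$. From the formulas in Lemma~\ref{lem:W},
\[
\tau_{n+1}=\tfrac1d\bigl(\log a_{n+1}+|\log e_n|\bigr),\qquad s_n-\tau_n=\tfrac1d\log(e_{n-1}/e_n).
\]
Hence
\[
B_\omega=\tfrac1d\sum_n \frac{\log a_{n+1}+|\log e_n|}{a_n}.
\]
Since $\Delta$ is nondecreasing, $\int_{\tau_n}^{\tau_{n+1}}e^{-\Delta}\le(\tau_{n+1}-\tau_n)/a_n\le \tau_{n+1}/a_n$. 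Summing over $n$ gives $I_\omega\le B_\omega$ directly.

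\textbf{Two auxiliary facts.} I would establish these first.
\begin{itemize}
\item[(a)] \emph{Dirichlet-type bound.} There is $c=c(d)$ with $e_n\,a_{n+1}^{d-1}\le c$. This follows from Minkowski's theorem applied to the convex body $\{|\hat x|<a_{n+1},\,|x\cdot\omega|\le c\,a_{n+1}^{-(d-1)}\}$: any nonzero lattice point in it has norm below $a_{n+1}$, hence has error at least $e_n$. Consequently $\log a_{n+1}\le (|\log e_n|+\log c)/(d-1)$ once $e_n<1$.
\item[(b)] \emph{Geometric growth.} There is $m=m(d)$ with $a_{n+m}\ge 2a_n$. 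To show this I would use a counting/pigeonhole argument on the finitely many integer points with $|\hat x|\le 2a_n$. Alternatively, I would use the standard fact that $d$ consecutive best approximations are linearly independent, combined with a determinant bound. It follows that $\sum_{n\ge m'}1/a_n\le C/a_{m'}$ and $\sum_n 1/a_n<\infty$.
\end{itemize}
With (a) and (b), the term $\log a_{n+1}/a_n$ is dominated by $|\log e_n|/a_n$ plus a summable remainder. This gives $B_\omega\asymp L_\omega$ up to additive constants, so $L_\omega<\infty\iff B_\omega<\infty$.

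\textbf{The remaining implications.} For $B_\omega\lesssim I_\omega$, I would swap the order of summation:
\[
B_\omega=\sum_m(\tau_{m+1}-\tau_m)\sum_{n\ge m}\frac1{a_n}\le C\sum_m\frac{\tau_{m+1}-\tau_m}{a_m}.
\]
Then I split $\tau_{m+1}-\tau_m=(s_m-\tau_m)+(\tau_{m+1}-s_m)$. The first piece contributes exactly $\int_{\tau_m}^{s_m}e^{-\Delta}$. The second piece equals $\frac1d\log(a_{m+1}/a_m)$, and I would control it through (a) together with the $s$-pieces of later terms. Concretely, since $|\log e_n|$ is increasing, a summation by parts should show that $\sum_n |\log e_n|/a_n$ is comparable to $\sum_n(\log e_{n-1}-\log e_n)/a_n$ plus $O(\sum_n 1/a_n)$.

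For $O_\omega\iff L_\omega$: for $a_n<2^j\le a_{n+1}$, the minimum in $O_\omega$ equals $e_n$. Therefore
\[
O_\omega=\sum_n|\log e_n|\sum_{a_n<2^j\le a_{n+1}}2^{-j}.
\]
The inner sum is at most $2/a_n$, which gives $O_\omega\lesssim L_\omega$. It is at least $\tfrac1{2a_n}$ whenever some power of $2$ lies in $(a_n,a_{n+1}]$. The indices with no such power I would absorb into a neighbouring index, using $|\log e_n|\le|\log e_{n+1}|$ and (b).

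\textbf{The counterexample, which I expect to be the main obstacle.} Here one must build $\alpha$ inductively, for instance by nested boxes in the spirit of Moshchevitin's constructions. The goal is for the linear-form best approximations to have occasional enormous jumps: tiny $e_n$ with $a_n$ moderate, forcing $|\log e_n|/a_n$ to be non-summable. At the same time, the simultaneous approximations $q_n$ must not inherit these deep approximations, which by transference is possible only when $d\ge3$. The key point is that a single very good linear form $p_n$ confines $\alpha$ near a hyperplane but does not give good simultaneous approximations. I would choose $\alpha$ on a suitable affine rational hyperplane perturbation at each stage. I would then verify $S_\omega<\infty$ by tracking the simultaneous errors via Khintchine/Jarník-type transference inequalities, and ensure rational independence by a generic choice at each step.
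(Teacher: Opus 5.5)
Your treatment of the four equivalences is correct and is essentially the paper's argument from Theorem~\ref{prop:Br2}. You rewrite everything in terms of $a_n=|\hat p_n|$ and $e_n=|p_n\cdot\omega|$, and the only inputs are the Dirichlet bound $e_na_{n+1}^{d-1}\le (d-1)!$ and the geometric growth of $(a_n)$. Your direct bound $I_\omega\le B_\omega$ (from $e^{-\Delta}\le 1/a_n$ on $[\tau_n,\tau_{n+1}]$) and your Abel summation via $\sum_{n\ge k}1/a_n\le C/a_k$ are a little cleaner than the paper's Lemma~\ref{lem:Abel}. Note that the term $|\log e_0|$ is not $O(\sum 1/a_n)$; it equals $d\,s_0=d\int_0^{s_0}e^{-\Delta}$, so it is absorbed by $I_\omega$. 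For (b), a pigeonhole over the integer points with $|\hat x|\le 2a_n$ gives nothing uniform in $n$. The working argument (Lemma~\ref{lem:egr}) splits the $p_k$ with $a_n\le a_k<2a_n$ by the sign of $p_k\cdot\omega$. Two of the same sign at $\ell^1$-distance $<a_n$ would have a difference beating $p_n$, so a packing bound allows at most $5^{d-1}$ per sign. Your alternative is not usable either, since $d$ consecutive best approximations need not be linearly independent.

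The genuine gap is the second assertion: you give no construction, and the verification you propose cannot work. If $|\log e_n|=O(\log a_n)$ then $L_\omega<\infty$ (Remark~\ref{remark 311}), so any example must have infinite linear-form exponent. In that regime Khintchine's transference inequalities give only $\omega^*\ge 1/(d-2)$ for the simultaneous exponent of $\alpha$, which says simultaneous approximations are \emph{good}, and they give no upper bound at all. But $S_\omega<\infty$ needs exactly a lower bound on the errors $|q_d\alpha-\hat q|$; a finite simultaneous exponent suffices, thanks to the exponential growth of $q_{n,d}$. Your nested-box heuristic creates deep linear-form approximations but contains nothing that controls the simultaneous ones.

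The missing idea is to decouple the coordinates. Take $\alpha=(\alpha',X)$ with $\alpha'\in\Rr^{d-2}$ badly approximable; this is where $d\ge3$ enters. Then $|q_d\alpha-\hat q|\ge |q_d\alpha'-\hat{\hat q}|\ge c\,q_d^{-1/(d-2)}$ for every $q$, whatever $X$ is, so $S_\omega<\infty$ comes for free. Meanwhile $X$ is free to produce deep linear-form approximations. Build $X=\lim_k a_k\cdot(\alpha',1)$ with $|\hat a_k|\ge 2|\hat a_{k-1}|$ and $|X-a_k\cdot(\alpha',1)|\le 3e^{-|\hat a_k|^2}$. Then $v_k=(\hat a_k,-1,a_{k,d-1})$ satisfies $|v_k\cdot\omega|\le 3e^{-|\hat a_k|^2}$ with $|\hat v_k|=|\hat a_k|+1$, which forces $L_\omega=\infty$. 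Rational independence follows by letting $k\to\infty$ in $z\cdot\omega=0$ and using that $\alpha'$ is also badly approximable as a linear form.
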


The first part of the proposition follows from the effective inequalities provided in Theorem~ \ref{prop:Br2} below. The second part will be established  in the next subsection.
  
Let 
$$
g_{d-1}:=2\times 5^{d-1}+1.
$$
According to Lemma \ref{lem:egr} in Appendix C, we  have $|\hat p_{n+g_{d-1}}|\geq 2|\hat p_n|$ for all $n\geq 0$,  i.e. the sequence $(|\hat p_n|)_n$ grows exponentially. 
Consequently,
\begin{equation}\label{sum 1/p_n}
\sum_{n\geq0} e^{-\Delta_n} = \sum_{n\geq0} \frac{1}{|\hat{p}_n|} \leq 2 g_{d-1}.
\end{equation}
This exponential growth  was first proved by Lagarias (\cite{Lag82,lagarias1983}). In our framework, the exponential growth rate is easy to prove, see Appendix ~\ref{appendix C}.

\begin{theorem}\label{prop:Br2}
	For any $\omega=(\alpha,1)\in\Rr^d$, we have the following inequalities
	\begin{enumerate}
		\item $\frac12 O_{\omega}\leq L_{\omega}\leq 2g_{d-1} O_{\omega}$
		\item $\tfrac1d L_{\omega}\leq B_{\omega}\leq aL_{\omega}+b$ where $a=\frac{1}{d-1}$ and $b=2g_{d-1}\frac{\log(d-1)!}{d(d-1)}$,
		\item $ I_{\omega}-\tfrac1d\leq B_{\omega}\leq a'I_{\omega} +b'$ where $a'=ad(4g_{d-1}+2)$  and  $b'=a(2g_{d-1}+1)(-2+\log|p_0\cdot\omega|)+b$.
	\end{enumerate}	
\end{theorem}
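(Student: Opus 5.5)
The plan is to express every quantity in terms of the two sequences $P_n:=|\hat p_n|$ and $\varepsilon_n:=|p_n\cdot\omega|$. By Lemma~\ref{lem:W}, $\Delta$ is constant on $[\tau_n,s_n]$ with value
\[
\Delta_n=\tau_n+W(\tau_n)=\log P_n,
\]
and has slope $d$ on $[s_n,\tau_{n+1}]$. We also have $\tau_{n+1}=\frac1d\log(P_{n+1}/\varepsilon_n)$, so
\[
B_\omega=\frac1d\sum_{n\ge0}\frac{\log P_{n+1}+|\log\varepsilon_n|}{P_n}
\]
(note $\varepsilon_n\le 1/2<1$ for $n\geq 0$ with a suitable choice, or at least eventually). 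Two facts will be used throughout. The first is the exponential growth $P_{n+g_{d-1}}\ge 2P_n$ (Lemma~\ref{lem:egr}) and its consequence \eqref{sum 1/p_n}. The second is Minkowski's bound in Remark~\ref{rem:Minkowski}: $W(s_n)\le\log(d-1)$ gives $P_n^{d-1}\varepsilon_n\le ((d-1)!)^{\cdot}$-type constants, i.e.
\[
\log P_{n+1}\le \frac{1}{d-1}|\log\varepsilon_n|+\frac{\log (d-1)!}{d(d-1)}\cdot(\text{const}).
\]
This is obtained by applying Minkowski at $t=\tau_{n+1}$: there $W(\tau_{n+1})=-\tau_{n+1}+\log P_{n+1}\le \frac1d\log(d-1)!$, which rearranges to exactly this shape.

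\textbf{Part (2).} The lower bound $B_\omega\ge \frac1d L_\omega$ is immediate from the displayed formula, since $\log P_{n+1}\ge 0$. For the upper bound, insert the Minkowski inequality for $\log P_{n+1}$. This bounds the $\log P_{n+1}$ terms by $\frac{1}{d-1}\sum|\log\varepsilon_n|/P_n$ plus a constant times $\sum 1/P_n\le 2g_{d-1}$. Combined with the $\frac1d$ prefactor, this gives $aL_\omega+b$. The constants then only need to be matched: $\frac1d+\frac{1}{d(d-1)}=\frac1{d-1}$.

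\textbf{Part (3).} Integrate $e^{-\Delta}$ piecewise. On $[\tau_n,s_n]$ the integral is $(s_n-\tau_n)/P_n$. On $[s_n,\tau_{n+1}]$ it is $\int e^{-\Delta_n-d(t-s_n)}\,dt\le \frac1{dP_n}$. Since $s_n-\tau_n\le\tau_{n+1}$, we get $I_\omega\le B_\omega+\frac1d\sum_n 1/P_n$. The stated $I_\omega-\frac1d\le B_\omega$ suggests a sharper bookkeeping: compare $\int_{\tau_n}^{\tau_{n+1}}$ with $e^{-\Delta_n}(\tau_{n+1}-\tau_n)$ and absorb. I would first try to bound $\sum(s_n-\tau_n)/P_n+\frac{1}{d}\cdot\frac1{P_0}$, using $P_0=1$ and a telescoping with the $\tau_n$. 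The reverse inequality $B_\omega\le a'I_\omega+b'$ is the main obstacle. By part (2), it suffices to bound $L_\omega$ by a multiple of $I_\omega$. Since $|\log\varepsilon_n|= d s_n-\log P_n$, I need $\sum s_n/P_n\lesssim I_\omega$. The idea is to use the growth lemma: over a block of $g_{d-1}$ indices before $n$, the intervals $[\tau_m,s_m]$ where $e^{-\Delta}\ge 1/P_n$ have total length comparable to $s_n$ minus earlier plateau contributions. More precisely, write
\[
s_n=\sum_{m\le n}(s_m-\tau_m)+\sum_{m<n}(\tau_{m+1}-s_m),
\]
where the second kind of term equals $(\Delta_{m+1}-\Delta_m)/d=\log(P_{m+1}/P_m)/d$. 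Then swap the sums, $\sum_n\frac1{P_n}\sum_{m\le n}(\cdot)=\sum_m(\cdot)\sum_{n\ge m}\frac1{P_n}$, and bound $\sum_{n\ge m}1/P_n\le 2g_{d-1}/P_m$ by exponential growth. The plateau terms give $\le(4g_{d-1}+2)$ times $\sum_m (s_m-\tau_m)/P_m\le I_\omega$. The log-increment terms give a constant, since $\sum \log(P_{m+1}/P_m)/P_m\le \sum P_{m+1}/P_m\cdot\ldots$; more carefully, $\log x\le x$ applied after grouping, together with $\tau_{n+1}\ge \tau_n$ from the geometry, controls them. The remaining work is bookkeeping to reach the constants $a',b'$, including the $\log\varepsilon_0$ term.

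\textbf{Part (1).} Let $m(j)=\min\{|k\cdot\omega|:0<|\hat k|<2^j\}$. By the definition of best approximations, $m(j)=\varepsilon_n$ for the largest $n$ with $P_n<2^j$. Hence
\[
O_\omega=\sum_n|\log\varepsilon_n|\sum_{j:\,P_n<2^j\le P_{n+1}}2^{-j}.
\]
The inner sum is at most $2/P_n$, which gives $O_\omega\le 2L_\omega$. For the converse, the inner sum is at least $\frac1{2P_{n+1}}$ only when the dyadic range is nonempty. So group indices into blocks where $P_n$ crosses a power of $2$. Within such a block the $\varepsilon_n$ decrease, so each $|\log\varepsilon_n|$ is dominated by the block's last one. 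Lemma~\ref{lem:egr} says each block has at most $g_{d-1}$ elements, which yields $L_\omega\le 2g_{d-1}O_\omega$.
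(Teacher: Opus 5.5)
Parts (1) and (2) follow the paper. Your Minkowski bound at $t=\tau_{n+1}$ is exactly Lemma~\ref{lem:Dirichlet}, $P_{n+1}^{d-1}\varepsilon_n\le(d-1)!$. Part (3), however, has two steps that do not work as written.

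\textbf{The lower bound in (3).} The inequality $I_\omega-\frac1d\le B_\omega$ is not proved. Bounding $\int_{s_n}^{\tau_{n+1}}e^{-\Delta}$ by $1/(dP_n)$ only gives $I_\omega\le B_\omega+2g_{d-1}/d$. You need the exact value. Since $\Delta_{n+1}=\Delta_n+d(\tau_{n+1}-s_n)$, this integral equals $\frac1d(P_n^{-1}-P_{n+1}^{-1})$, which telescopes to $\frac1d$. Hence $I_\omega=\frac1d+\sum_n(s_n-\tau_n)/P_n$ (Lemma~\ref{formula:integralDelta}), and $s_n-\tau_n<\tau_{n+1}$ concludes.

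\textbf{The upper bound in (3).} Your control of the log-increment terms via $\sum_m\log(P_{m+1}/P_m)/P_m$ is wrong. That series is not bounded independently of $\omega$: already its $m=0$ term is $\log P_1$, and for $d=2$ the series is, up to a bounded term, the classical Brjuno series. Using it is therefore circular. Because these terms come from $m<n$, the relevant tail is $\sum_{n\ge m+1}P_n^{-1}\le 2g_{d-1}/P_{m+1}$, and then $\log x\le x$ does give a constant. Even so, a positive additive constant combined with the factor $4g_{d-1}+2$ you quote can never yield the stated $b'$, which contains the negative term $a(2g_{d-1}+1)\log\varepsilon_0$.

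The clean repair is to avoid $s_n$ altogether. With $\varepsilon_{-1}:=1$ one has $d(s_m-\tau_m)=\log(\varepsilon_{m-1}/\varepsilon_m)>0$, so $|\log\varepsilon_n|=d\sum_{m\le n}(s_m-\tau_m)$ exactly. Your interchange of sums then gives
\[
L_\omega=d\sum_{m\ge0}(s_m-\tau_m)\sum_{n\ge m}\frac1{P_n}\le 2g_{d-1}\,d\sum_{m\ge0}\frac{s_m-\tau_m}{P_m}=2g_{d-1}\,(dI_\omega-1).
\]
Combine this with (2) and with $dI_\omega-1\ge ds_0=|\log\varepsilon_0|$. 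The second inequality supplies the slack needed to absorb the negative term in $b'$, giving $B_\omega\le a'I_\omega+b'$ with the stated constants.

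\textbf{Comparison with the paper.} Once repaired, your upper bound follows a genuinely different and simpler route. The paper proves the Abel-type Lemma~\ref{lem:Abel} by comparing partial sums over windows of length $g_{d-1}$. It obtains $L_\omega\le(4g_{d-1}+2)(dI_\omega-1+\frac12\log\varepsilon_0)$. Your Fubini interchange uses the tail bound $\sum_{n\ge m}P_n^{-1}\le 2g_{d-1}/P_m$ from Lemma~\ref{lem:egr}. It replaces that lemma in one line and gives the better factor $2g_{d-1}$, from which the paper's weaker constants follow.
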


\begin{remark} $O_{\omega}<\infty\iff L_{\omega}<\infty$ and $O_{\omega}<\infty\implies S_{\omega}<\infty$ are well known (see ~\cite{DiasGaivao2019} and ~\cite{GoLo}).
	\end{remark}

\begin{remark}
	The function $\delta(\LL)$ is defined with respect  to the norm $\|.\|_*$ on $\Rr^{d}$. However the convergence of the integral $I_{\omega}$ does not depend on a particular choice of a norm on $\Rr^d$. 
	The others conditions, namely $O_{\omega}<\infty$ and $L_{\omega}<\infty$, are also independent of the choice of the norm on $\Rr^{d-1}$. It is fairly obvious in the case of $O_{\omega}<\infty$. As  for $L_{\omega}<\infty$, the norm independence follows from a result on best approximation vectors associated with different norms; see Appendix ~\ref{appendix C}.   
\end{remark}

\begin{remark}\label{rem:lineardep} 

	
If $x\cdot\omega=0$ for some nonzero vector $x\in\Zz^d$, then for all $t$,
		\[
		\delta(\Zz^dM_{\omega}E^t)\leq \|x^TM_{\omega}E^t\|_*= e^{-t}|\hat x|,
		\] 
		and therefore 	$\int_{0}^{\infty}\frac{1}{e^t\delta(\Zz^dM_{\omega}E^t)}dt=\infty$. Moreover,   $p_n\cdot\omega=0$ for some $n$ and there are only finitely many best approximations of the linear form. Accordingly, the quantity $|\log|p_n.\omega||$  is understood to be $+\infty$.
\end{remark}

\begin{remark}
	If $d=2$, we have $\tfrac12\leq |\hat p_{n+1}||p_n\cdot\omega|\leq 1$, hence  
	$$
	\sum_{n\geq 0}\frac{|\log |p_n\cdot\omega||}{|\hat p_n|}<\infty\iff \sum_{n\geq 0}\frac{\log |\hat p_{n+1}|}{|\hat p_n|}<\infty.
	$$ 
	Furthermore, when $d=2$ the best approximations to the linear form coincides with the best approximation to the vector so that the condition $L_{\omega}<\infty$ is equivalent to the classical condition  $\sum_{n\geq 0}\frac{\log q_{n+1}}{q_n}<\infty$.
\end{remark}

\begin{remark}
	When $d\geq 3$, due to the existence of singular linear forms, the quantity $|p_n\cdot\omega||\hat p_{n+1}|^d$ can be arbitrarily closed to zero. So the  condition $L_{\omega}<\infty$ is not equivalent to a condition involving only  the sequence   of  short vectors norm, i.e., $(|\hat p_n|)_n$.
\end{remark}

\begin{remark}\label{remark 311}
	If the linear form $\omega$ has a finite Diophantine exponent -- that is, if there exists $s<\infty$ such that for all nonzero $k\in\Zz^d$, we have $|k\cdot\omega|\geq C/|\hat k|^s$ for some $C>0$ -- then $\omega$ satisfies the linear forms Brjuno condition. This follows from the exponential growth rate of the sequence  $(|\hat p_n|)_n$ (see Lemma \ref{lem:egr}). 
 Then, it follows from  Jarn\'ik's theorem for linear forms with $d-1$ variables, that the set of linear forms associated with vectors $\omega=(\alpha,1)$, that do not satisfy the linear forms Brjuno condition, has   Hausdorff dimension at most $d-2$ (see \cite{BoveyDodson1986}).  Taking remark \ref{rem:lineardep} into account, we see that the set of linear forms that do not satisfy the  equivalent  Brjuno conditions has   Hausdorff dimension exactly equal to $d-2$.
\end{remark}

\begin{proof}[Proof of Theorem~\ref{prop:Br2} (1)]
 For $j\in\Nn$, let denote $\varepsilon_j=\min\{|p\cdot\omega|\colon p\in\Zz^d,0<|\hat p|\leq 2^j\}$ and $\mathcal I_j=\{n\in\Nn\colon 2^{j-1}<|\hat p_n|\leq 2^j\}$. 
 By Lemma \ref{lem:egr}, $\#\mathcal I_j\leq g_{d-1}$. Furthermore, $\frac{|\log|p_n\cdot\omega||}{|\hat p_n|}\leq \frac{|\log\varepsilon_j|}{2^{j-1}}$ for $n\in\mathcal I_j$. Therefore,
 \[
 	L_{\omega}=\sum_{n\geq 0}\frac{|\log|p_n\cdot\omega||}{|\hat p_n|}=\sum_{j\geq 0} \sum_{n\in\mathcal I_j}\frac{|\log|p_n\cdot\omega||}{|\hat p_n|}
 	\leq \sum_{j\geq 0}\#\mathcal I_j\frac{|\log\varepsilon_j|}{2^{j-1}}=2g_{d-1}O_{\omega}.
 	\]
 	Next, for all $j\geq 0$, if $n$ is maximal with $|\hat p_n|\leq 2^j$, then $\varepsilon_j= |p_n\cdot\omega|$. Therefore,
 	\begin{align*}
 		O_{\omega}=\sum_{j\geq 0}\frac{\varepsilon_j}{2^j}\leq \sum_{j\geq 0}\sum_{n\geq 0\colon |\hat p_n|\leq 2^j}\frac{|\log|p_n\cdot\omega||}{2^j}=\sum_{n\geq 0}\sum_{j\geq 0\colon 2^j\geq |\hat p_n|}\frac{|\log|p_n\cdot\omega||}{2^j} \leq 2L_{\omega}.
 	\end{align*}  	
\end{proof}	
\begin{proof}[Proof of Theorem~ \ref{prop:Br2} (2)]
	In Lemma \ref{lem:W} about the function $W$, we have seen that
	 $\tau_0=0$, $\tau_{n+1} = \frac{1}{d} \log \left( \frac{|\hat p_{n+1}|}{|p_{n} \cdot \omega|} \right)$  and  $\Delta(\tau_n)=W(\tau_n)+\tau_n=\log|\hat p_n|$ for all $n\geq 0$. Hence
	\begin{align*}
		B_{\omega}=\sum_{n\geq 0}e^{-\Delta(\tau_n)}\tau_{n+1}&=\frac1d\sum_{n\geq 0}\frac{\log|\hat p_{n+1}|-\log|p_n\cdot\omega|}{|\hat p_n|}.
	\end{align*}
	Hence, $B_{\omega}\geq \tfrac1d L_{\omega}$. 
	Now by Lemma \ref{lem:Dirichlet}, $|\hat p_{n+1}|^{d-1}|p_n\cdot\omega|\leq (d-1)!$, hence
	\begin{align*}
		B_{\omega}\leq \frac{\log(d-1)!}{d(d-1)}\sum_{n\geq 0}\frac1{|\hat p_n|}+\frac1{d-1}L_{\omega}. 
	\end{align*}
	By Lemma \ref{lem:egr}, $|\hat p_{n+g_{d-1}}|\geq 2|\hat p_n|$, hence $\sum_{n\geq 0}\frac1{|\hat p_n|}\leq 2g_{d-1}$ and (2) holds.  
\end{proof}	
\begin{proof}[Proof of Theorem~ \ref{prop:Br2} (3)]
	We need two lemmas. The second lemma enables us to exploit the exponential growth of the sequence of best approximation vectors. We will use this  lemma with $a_n=-\log|p_n\cdot\omega|$, $b_n=|\hat p_n|$ and $n_0=g_{d-1}$.

\begin{lemma}\label{formula:integralDelta}
	\begin{align*}
	\int_{0}^{\infty}e^{-\Delta(t)}dt&=\frac1{d}+\sum_{n\geq 0}e^{-\Delta(\tau_n)}(s_n-\tau_n)\\
	&=\frac1{d}\big(1-\log|p_0\cdot\omega|+\sum_{n\geq 1}\frac1{|\hat p_n|} (-\log|p_n\cdot\omega|+\log |p_{n-1}\cdot\omega|)\big)
	\end{align*}
\end{lemma}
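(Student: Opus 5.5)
Both formulas follow from integrating $e^{-\Delta}$ piece by piece using the explicit description of $W$ in Lemma~\ref{lem:W}. On each interval $[\tau_n,s_n]$ the function $W$ has slope $-1$, so $\Delta$ is constant there and equal to $\Delta(\tau_n)=\log|\hat p_n|$. On each interval $[s_n,\tau_{n+1}]$ the function $W$ has slope $d-1$, so $\Delta$ has slope $d$ and rises from $\Delta(s_n)=\Delta(\tau_n)$ to $\Delta(\tau_{n+1})$. Hence
\[
\int_{\tau_n}^{s_n}e^{-\Delta}=e^{-\Delta(\tau_n)}(s_n-\tau_n),\qquad \int_{s_n}^{\tau_{n+1}}e^{-\Delta}=\tfrac1d\bigl(e^{-\Delta(\tau_n)}-e^{-\Delta(\tau_{n+1})}\bigr).
\]
We sum over $n$ by monotone convergence, since the integrand is positive and the intervals tile $[0,\infty)$, provided $\tau_n\to\infty$. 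The second family of integrals telescopes to $\tfrac1d\bigl(e^{-\Delta(\tau_0)}-\lim_n e^{-\Delta(\tau_n)}\bigr)=\tfrac1d$. This uses $\Delta(\tau_0)=\Delta(0)=0$ and $e^{-\Delta(\tau_n)}=1/|\hat p_n|\to 0$. The first identity follows.

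For the second identity we substitute the formulas of Lemma~\ref{lem:W}. For $n\ge1$,
\[
s_n-\tau_n=\tfrac1d\bigl(\log|p_{n-1}\cdot\omega|-\log|p_n\cdot\omega|\bigr).
\]
For $n=0$, $\tau_0=0$ and $|\hat p_0|=1$, so $s_0=-\tfrac1d\log|p_0\cdot\omega|$, and the $n=0$ term equals $-\tfrac1d\log|p_0\cdot\omega|$. Multiplying by $e^{-\Delta(\tau_n)}=1/|\hat p_n|$ and collecting the terms gives exactly the stated expression.

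The main technical points are the degenerate cases, which the equalities must also cover. If $\omega$ is rationally dependent, the sequence $(p_n)$ is finite and ends with $p_N\cdot\omega=0$. Then $s_N=\infty$ and $\Delta$ stays constant on $[\tau_N,\infty)$, so both sides are $+\infty$, in keeping with the convention of Remark~\ref{rem:lineardep}. If $(p_n)$ is infinite, then $|\hat p_n|\to\infty$ because these are distinct integers, and $\tau_n\to\infty$ because $\Delta(\tau_n)=\log|\hat p_n|\le d\tau_n$ by Remark~\ref{rem:Minkowski}. Together these justify the telescoping limit and the fact that the intervals exhaust $[0,\infty)$.
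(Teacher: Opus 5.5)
Your proposal is correct and is exactly the computation the paper leaves as ``straightforward'': you integrate $e^{-\Delta}$ over the pieces given by Lemma~\ref{lem:W}, where $\Delta$ is constant on $[\tau_n,s_n]$ and has slope $d$ on $[s_n,\tau_{n+1}]$, telescope the second family of integrals to $\tfrac1d$, and substitute the explicit values of $s_n-\tau_n$. Your treatment of the degenerate rationally dependent case and of $\tau_n\to\infty$ supplies details the paper does not spell out.
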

\begin{proof}
	Straightforward computation.
\end{proof}

\begin{lemma}\label{lem:Abel}
	Let $(a_n)_{n\geq 0}$ be a non decreasing sequence of non-negative real numbers and  $(b_n)_{n\geq 0}$ be a non decreasing sequence of positive real numbers. If there exists $n_0\in\Nn$ such that $b_{n+n_0}\geq 2 b_n$ for all $n\geq 0$, then
	\[
	\sum_{n\geq 1}\frac{a_n}{b_n}\leq 2n_0\frac{a_0}{b_0}+(4n_0+2)\sum_{n\geq 1}\frac{a_{n}-a_{n-1}}{b_n}.
	\]
\end{lemma}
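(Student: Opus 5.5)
The plan is a summation by parts followed by a geometric tail estimate; the increments $a_n-a_{n-1}$ are non-negative, so every rearrangement is between non-negative series. Set $c_k:=a_k-a_{k-1}\geq 0$ for $k\geq 1$, so that $a_n=a_0+\sum_{k=1}^n c_k$. Substituting and exchanging the order of summation (Tonelli, so no convergence assumption is needed) gives
\[
\sum_{n\geq 1}\frac{a_n}{b_n}= a_0\sum_{n\geq 1}\frac1{b_n}+\sum_{k\geq 1}c_k\sum_{n\geq k}\frac1{b_n}.
\]
The whole lemma then reduces to the tail estimate
\[
T_k:=\sum_{n\geq k}\frac1{b_n}\leq \frac{2n_0}{b_k},\qquad k\geq 0.
\]

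To prove the tail estimate, split the indices $n\geq k$ into consecutive blocks $\{k+jn_0,\dots,k+(j+1)n_0-1\}$ for $j\geq 0$. Each block has $n_0$ elements. Iterating $b_{m+n_0}\geq 2b_m$ gives $b_{k+jn_0}\geq 2^j b_k$, and since $(b_n)$ is non-decreasing, $b_n\geq 2^jb_k$ for every $n$ in the $j$-th block. Hence that block contributes at most $n_0 2^{-j}/b_k$ to $T_k$. Summing the geometric series over $j$ yields $T_k\leq 2n_0/b_k$.

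Applying this with $k=1$, and using $b_1\geq b_0$, gives $a_0\sum_{n\geq1}1/b_n\leq 2n_0a_0/b_0$. Applying it for each $k\geq 1$ gives $\sum_k c_kT_k\leq 2n_0\sum_{k\geq1}c_k/b_k$. Together these give
\[
\sum_{n\geq 1}\frac{a_n}{b_n}\leq 2n_0\frac{a_0}{b_0}+2n_0\sum_{n\geq 1}\frac{a_n-a_{n-1}}{b_n},
\]
and since $2n_0\leq 4n_0+2$ and the increments are non-negative, the stated inequality follows.

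There is no serious obstacle. The only point needing care is that both the interchange of summation and the block bound use the non-negativity of the $c_k$ and the monotonicity of $(b_n)$; the hypothesis that $(a_n)$ is non-decreasing is exactly what makes the $c_k$ non-negative.
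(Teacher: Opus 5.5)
Your proof is correct, and it takes a different and simpler route than the paper's. The paper does Abel summation only on sliding windows $\{n,\dots,n+n_0\}$. It rewrites $\sum_{k=n}^{n+n_0}\frac{a_k-a_{k-1}}{b_k}$ by parts and uses monotonicity and $b_{n+n_0}\geq 2b_n$ to bound it below by $-\frac{a_n}{2b_n}+\frac{a_{n+n_0}}{b_{n+n_0}}$. It then sums over $1\leq n\leq Nn_0$, with each increment counted at most $n_0+1$ times, and handles the first $n_0$ terms $\sum_{n=1}^{n_0}a_n/b_n$ separately. This bookkeeping is where the constant $4n_0+2=2\bigl((n_0+1)+n_0\bigr)$ comes from.

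You instead do a single global summation by parts, justified by Tonelli since $c_k\geq 0$. This reduces the whole lemma to the tail bound $\sum_{n\geq k}1/b_n\leq 2n_0/b_k$, the same geometric-block estimate that underlies \eqref{sum 1/p_n}. You obtain the inequality with $2n_0$ in place of $4n_0+2$, and the same $a_0$ term.

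The paper's argument only manipulates finite partial sums, but since all terms are non-negative this gains nothing over your interchange of summations. Your sharper constant would, for instance, replace $4g_{d-1}+2$ by $2g_{d-1}$ in the constant $a'$ of Theorem~\ref{prop:Br2}(3).
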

\begin{proof}
	For all $n\geq 1$, we have 
\begin{align*}
	\sum_{k=n}^{n+n_0}\frac{a_{k}-a_{k-1}}{b_k}
	&=\sum_{k=n}^{n+n_0-1}\big(\frac{1}{b_k}-\frac{1}{b_{k+1}}\big)a_k-\frac{a_{n-1}}{b_n}+\frac{a_{n+n_0}}{b_{n+n_0}}.
\end{align*}
Since the sequences  $(a_n)_{n}$ and $(b_n)_n$ are non decreasing, it follows that
\begin{align*}
	\sum_{k=n}^{n+n_0}\frac{a_{k-1}-a_k}{b_k}\geq 
	\big(\frac{1}{b_n}-\frac{1}{b_{n+n_0}}\big)a_n-\frac{a_{n}}{b_n}+\frac{a_{n+n_0}}{b_{n+n_0}}.
\end{align*}
Since $a_n\geq 0$ and $b_{n+n_0}\geq 2b_n>0$, 
\begin{align*}
	\sum_{k=n}^{n+n_0}\frac{a_{k-1}-a_k}{b_k}\geq 
	\frac{a_n}{2b_n}-\frac{a_{n}}{b_n}+\frac{a_{n+n_0}}{b_{n+n_0}}.
\end{align*}
Therefore, for all $N\geq 1$,
	\begin{align*}
		(n_0+1)\sum_{k= 1}^{Nn_0+n_0}\frac{a_{k}-a_{k-1}}{b_k}
		&\geq\sum_{n=1}^{Nn_0}\sum_{k=n}^{n+n_0}\frac{a_k-a_{k-1}}{b_k} \geq\sum_{n= 1}^{n_0N} \frac{a_n}{2b_n}-\sum_{n=1}^{n_0}\frac{a_{n}}{b_n}.
	\end{align*}
Now, for each $n\in\{1,\dots,n_0\}$, we have
\begin{align*}
	\sum_{k=1}^{n_0}\frac{a_k-a_{k-1}}{b_k}\geq \sum_{k=1}^n\frac{a_k-a_{k-1}}{b_k}\geq \sum_{k=1}^n\frac{a_k-a_{k-1}}{b_n}=\frac{a_n-a_0}{b_n}.
\end{align*}
Hence,
\[
 \sum_{n=1}^{n_0}\frac{a_{n}}{b_n}\leq a_0\sum_{n=1}^{n_0}\frac{1}{b_n}+n_0\sum_{k=1}^{n_0}\frac{a_k-a_{k-1}}{b_k}. 
\] 
Therefore, 
\begin{align*}
	\sum_{n= 1}^{n_0N} \frac{a_n}{2b_n}&\leq (n_0+1)\sum_{n= 1}^{Nn_0+n_0}\frac{a_{n}-a_{n-1}}{b_n} + a_0\sum_{n=1}^{n_0}\frac{1}{b_n}+n_0\sum_{k=1}^{n_0}\frac{a_k-a_{k-1}}{b_k}\\
	&\leq (2n_0+1)\sum_{n\geq 1}\frac{a_n-a_{n-1}}{b_n}+n_0\frac{a_0}{b_0}
\end{align*}
and the lemma follows.
\end{proof}

{\it End of proof of Theorem~ \ref{prop:Br2} (3).}
By Lemma \ref{lem:Abel}, with 
$$
a_n=-\log|p_n\cdot\omega|,
$$ 
$b_n=|\hat p_n|$ and $n_0=g_{d-1}$,
\[
\sum_{n\geq 1}\frac{-\log|p_n\cdot\omega|}{|\hat p_n|}\leq -2g_{d-1}\log|p_0\cdot\omega|+(4g_{d-1}+2)\sum_{n\geq 1}\frac{-\log|p_n\cdot\omega|+\log|p_{n-1}\cdot\omega|}{|\hat p_n|}. 
\]
By Lemma \ref{formula:integralDelta},
\[
I_{\omega}=\frac1d\big(1-\log|p_0\cdot\omega|+\sum_{n\geq 1}\frac{-\log|p_n\cdot\omega|+\log|p_{n-1}\cdot\omega|}{|\hat p_n|}\big),
\]
hence, 
$
I_{\omega}\leq \frac1d(1+L_{\omega})
$, and 
\begin{align*}
	I_{\omega}&\geq \frac1d\big(1-\log|p_0\cdot\omega|+\frac{1}{4g_{d-1}+2}[2g_{d-1}\log|p_0\cdot\omega|+L_{\omega}+\log|p_0\cdot\omega|]\big)\\
	&=\frac1d\big(1-\frac{1}{2}\log|p_0\cdot\omega|+\frac{1}{4g_{d-1}+2}L_{\omega}\big).
\end{align*} 
Now by (2), $\tfrac1d L_{\omega}\leq B_{\omega}\leq aL_{\omega}+b$, hence
\[
I_{\omega}-\tfrac1d\leq B_{\omega}\leq a'I_{\omega}+b'
\]  
where
\[
a'=ad(4g_{d-1}+2) \text{ and } b'=a(-(4g_{d-1}+2)+(2g_{d-1}+1)\log|p_0.\omega|)+b.
\]
\end{proof}

\subsection{$S_{\omega}<\infty$ does not imply $L_{\omega}<\infty$ }
\label{sec:non BC}

\begin{theorem}\label{prop: Hausdorff dim}
Suppose that $d\geq 3$ and let $E$ be the set of
	$\omega=(\alpha,1)\in\Rr^d$  such that $S_{\omega}<\infty$,  $\,L_{\omega}=\infty$ and the coordinates of $\omega$ are rationally independent. Then the Hausdorff dimension of $E$ is $d-2$.
\end{theorem}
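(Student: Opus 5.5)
The upper bound comes for free from Remark~\ref{remark 311}. Every $\omega$ with $L_\omega=\infty$ fails to have a finite Diophantine exponent as a linear form. By Jarn\'ik's theorem for linear forms in $d-1$ variables (see \cite{BoveyDodson1986}), the set of such $\alpha\in\Rr^{d-1}$ has Hausdorff dimension at most $d-2$. Hence $\dim E\le d-2$, and the whole task is the lower bound: construct a Cantor-type subset of $E$ of dimension $d-2$ (or at least $\ge d-2-\varepsilon$ for each $\varepsilon>0$).

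For the lower bound I will exploit the gap between the two conditions. $L_\omega$ sees extremely small values $|p_n\cdot\omega|$, while $S_\omega$ only sees simultaneous approximation errors. By Khintchine transference these are only weakly coupled when $d\ge3$. The idea is to take $\alpha$ very close to a sequence of rational hyperplanes $\{x: k_j\cdot(x,1)=0\}$, with $k_j\in\Zz^d$ primitive and $|\hat k_j|=N_j$. I will choose $|k_j\cdot\omega|\approx e^{-N_j\psi_j}$ with $\psi_j\to\infty$ slowly but $\sum_j\psi_j=\infty$. Each such $k_j$ is dominated by a best approximation $p_n$ with $|\hat p_n|\le N_j$, so it contributes at least $\psi_j$ to $L_\omega$ up to constants, and $L_\omega=\infty$.

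Simultaneously I need $S_\omega<\infty$. Being close to a hyperplane gives no good simultaneous approximation in the directions along the hyperplane. So I will require that, within each chosen hyperplane, the point is "generic": say $\alpha$ restricted to the $(d-2)$-dimensional affine slice is badly approximable, or at least has simultaneous approximation errors $|q_{n,d}\alpha-\hat q_n|\ge q_{n,d}^{-1/(d-1)-\eta}$. Then $|\log|q_{n,d}\alpha-\hat q_n||\lesssim \log q_{n,d}$, and since the $q_{n,d}$ grow exponentially (Lagarias), $\sum \log q_{n,d}/q_{n,d}^{1/(d-1)}<\infty$. Concretely, I will build nested sets: at stage $j$ pick many hyperplanes $H_{k_j}$ with $|\hat k_j|\approx N_j$ meeting the current piece, and keep thin neighbourhoods of width $e^{-N_j\psi_j}/N_j$ around them. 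These neighbourhoods are slabs that are long in $d-2$ directions and thin in one direction. A mass-distribution / Cantor-set argument of Jarn\'ik--Besicovitch type, with $N_j$ growing super-fast, should give dimension $d-2$. The thickness is subexponential in the $\log$-scale only when divided by $N_j$, so the codimension-one slabs contribute dimension $d-2$ plus a vanishing amount.

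The main obstacle is controlling $S_\omega$. The thin slab forces one small linear form value, and by transference this produces some rather good simultaneous approximations of size about $|k_j\cdot\omega|^{1/(d-1)}$ in a weak sense. I must check that the resulting $\log$ of the simultaneous errors, weighted by $q^{-1/(d-1)}$, stays summable. The plan is to quantify, via Mahler's transference with the lattice $\Lambda_\omega E^t$, that the best simultaneous approximations with denominator $q$ have errors at least about $q^{-1/(d-1)}\cdot(\text{polynomial in }N_j)^{-1}$ when $q$ ranges between consecutive scales. This relies on genericity inside the slab, which I will enforce by deleting from each slab the subsets where a too-good simultaneous approximation occurs. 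That deletion costs measure, but not dimension. Choosing $\psi_j$ growing slowly enough, e.g.\ $\psi_j=1/j$-summable-failing but with $N_j$ doubly exponential, should leave $\sum_n \log(1/\text{err}_n)/q_{n,d}^{1/(d-1)}$ dominated by $\sum_j \psi_j N_j/N_j^{c}$ with $c>1$, which converges.
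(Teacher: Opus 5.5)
Your upper bound is the paper's (Remark~\ref{remark 311}), and the $L_\omega=\infty$ half of your construction is sound. The $S_\omega<\infty$ half, however, rests on an estimate that is false.

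You plan to show by transference that, between scales, best simultaneous approximations have errors at least about $q^{-1/(d-1)}N_j^{-A}$. This fails at \emph{every} point of the slab, so no deletion can enforce it. Let $\epsilon_j=e^{-N_j\psi_j}/N_j$, and let $\beta\in H_{k_j}$ satisfy $|\alpha-\beta|\le\epsilon_j$. The integer vectors orthogonal to $k_j$ form a lattice of rank $d-1$ and covolume $\ll N_j$. Minkowski's theorem in this lattice gives, for every $Q$, some $1\le q_d\le Q$ with $|q_d\beta-\hat q|\ll (N_j/Q)^{1/(d-2)}$. Hence
\[
|q_d\alpha-\hat q|\ll (N_j/Q)^{1/(d-2)}+Q\epsilon_j .
\]
For $N_j^{B}\le Q\le\epsilon_j^{-1/2}$ with $B$ large, this is far below $Q^{-1/(d-1)}N_j^{-A}\le q_d^{-1/(d-1)}N_j^{-A}$, because $\frac1{d-2}>\frac1{d-1}$.

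This is just Khintchine's transference: $L_\omega=\infty$ forces an infinite linear-form exponent, hence a simultaneous exponent $\ge\frac1{d-2}$. For the same reason, your fallback condition $|q_{n,d}\alpha-\hat q_n|\ge q_{n,d}^{-1/(d-1)-\eta}$ can hold only if $\eta\ge\frac1{d-2}-\frac1{d-1}$. Your final bookkeeping $\sum_j\psi_jN_j/N_j^{c}$ is therefore unjustified.

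You actually need much less. If $|q\alpha-p|\ge q^{-C}$ for all large $q$, then $|\log|q_{n,d}\alpha-\hat q_n||\le C\log q_{n,d}+O(1)$, and the exponential growth of $q_{n,d}$ gives $S_\omega<\infty$. By Jarn\'ik's theorem on simultaneous approximation, the $\alpha\in\Rr^{d-1}$ with infinite simultaneous exponent form a set of Hausdorff dimension $0<d-2$. That set can simply be discarded from your Cantor set at the end, with no deletion inside the construction.

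The second gap is that you never ensure the coordinates of $\omega$ are rationally independent, and no dimension count can do this for you. The rationally dependent $\omega$ fill a countable union of rational hyperplanes. That is a set of dimension exactly $d-2$ on which $L_\omega=\infty$ holds trivially. So a Cantor set of dimension $d-2$ proves nothing until you show it is not carried by these hyperplanes. This exclusion must be built into the construction, and you must check that the mass-distribution estimate, which you only assert, survives it.

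The paper avoids both problems by using a \emph{coordinate} slice:
\begin{itemize}
\item It writes $\alpha=(\alpha',X)$ with $\alpha'\in\Rr^{d-2}$ badly approximable. Then $|q_d\alpha-\hat q|\ge|q_d\alpha'-\hat{\hat q}|\ge c\,q_d^{-1/(d-2)}$ for every $q$, so $S_\omega<\infty$ follows with no transference.
\item It tunes only the last coordinate $X=\sum_k x_k$, so that $v_k=(\hat a_k,-1,a_{k,d-1})$ satisfies $|v_k\cdot\omega|\le 3e^{-|\hat a_k|^2}$.
\item Rational independence comes from $\alpha'$ being badly approximable as a linear form.
\item $\dim E\ge d-2$ follows by projecting onto $\alpha'$ (a Lipschitz map) together with Schmidt's theorem, so no Cantor set or mass distribution is needed.
\end{itemize}
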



	\begin{proof}
		In the proof, given a positive integer $D$, for a vector 
		$u=(b_1,\dots,b_{D})\in\Rr^{D}$, 
		we shall use the  notations 
		$\hat u=(b_1,\dots,b_{D-1})$, $u_D=b_D$,
		$|u|=\sum_{i=1}^D|b_i|$ and $|\hat u|=\sum_{i=1}^{D-1}|b_i|$.
		Specifically we shall take $D=d-1$ or $d$.
		
		Let $\omega'=(\alpha',1)\in\Rr^{d-1}$ with rationally independent coordinates and $\alpha'$ a badly approximable vector in $\Rr^{d-2}$, i.e., $|a\alpha'-b|\geq ca^{-1/(d-2)}$ for all $a\in \Zz$ with $a>0$ and all $b\in\Zz^{d-2}$ where $c$ is some positive constant. Observe that $\alpha'$ is also badly approximable as a linear form (see \cite{Cassels57}, chapter V)
		   
		Let  us construct by induction  a sequence of vectors $(a_k)_{k\geq 0}$ in $\Zz^{d-1}$ and a sequence of real numbers $(x_k)_{k\geq 0}$  such that for all $k\geq 1$,
		\begin{enumerate}
			\item $|\hat a_k|\geq 2|\hat a_{k-1}|$,
			\item $x_k\in[0,2e^{-|\hat a_{k-1}|^2}]$,
			\item $a_k\cdot\omega'=X_k+e^{-|\hat a_{k}|^2}$ where $X_k=\sum_{i=0}^k x_k$.
		\end{enumerate} 
		
		Take $x_0=0$, $\hat a_0$ any nonzero vector in $\Zz^{d-2}$ and  $a_{0,d-1}=-\lfloor \hat a_0.\alpha' \rfloor$ (any choice for $a_{0,d-1}$ works).

Assume that the sequences have been constructed up to index $k-1$. We now define $a_k$ and $x_k$.

		Since  $\alpha'$ has at least one irrational coordinate, we can find a vector $ a_k\in\Zz^{d-1}$ such that $|\hat a_k|\geq 2|\hat a_{k-1}|$ and 
		\[
		a_k\cdot\omega'\in [X_{k-1}+e^{-|\hat a_{k-1}|^2},X_{k-1}+2e^{-|\hat a_{k-1}|^2}].
		\]
			Taking 
		\[
		x_k=a_k\cdot\omega'-X_{k-1}-e^{-|\hat a_{k}|^2},
		\]
		conditions (2) and (3) hold.
		
		Let $X=\sum_{k\geq 0}x_k$. Clearly $X<\infty$. If we can prove that  $\omega=(\alpha,1)$,  with $\alpha=(\alpha',X)$, belongs to $E$,  then we are done because the Hausdorff dimension of the set of badly approximable $\alpha'\in\Rr^{d-2}$ is $d-2$ (see \cite{Sch}) and the Hausdorff dimension of $E$ is $\leq d-2$ by Remark~\ref{remark 311}.
		
		First observe that since $\alpha'$ is badly approximable in $\Rr^{d-2}$, there is constant $c>0$ such that  
		\[
		|q_{d}\alpha-\hat q|\ge|q_d\alpha'-\hat{\hat q}|\geq c q_{d}^{-1/(d-2)}
		\] 
		for all $q\in\Zz^d$ with $q_d\neq 0$. Thanks to the exponential growth rate of the denominators of the best approximations vectors $(q_n)_n$ to $\omega$,  it implies that 
		$S_{\omega}<\infty$.
		
		 Now define $v_k=(\hat a_k,-1,a_{k,d-1})$. From condition (3), we have
		 \begin{align*}
		 v_k\cdot\omega&=a_k\cdot\omega'-X\\
		 &=a_k\cdot\omega'-X_{k}-\sum_{i>k}x_i\\
		 &=e^{-|\hat a_{k}|^2}-\sum_{i>k}x_i.
		 \end{align*}
		 By condition (1), 
		 \[
		 \sum_{i>k}e^{-|\hat a_{i-1}|^2}\le\sum_{n\ge 0}e^{-4^n|\hat a_k|^2}\leq \frac32 e^{-|\hat a_{k}|^2},
		 \]
		  and since $0\leq x_i\leq 2e^{-|\hat a_{i-1}|^2}$ by condition (2), we obtain
		 \[
		 | v_k\cdot\omega|\leq  3e^{-|\hat a_{k}|^2}.
		 \]
Let $(p_n)_{n\ge 0}$ be the sequence of best approximation vectors to the linear form $\omega$ and let
$n_k=\max\{n\in\Nn\colon |\hat p_n|\leq |\hat v_k|\}$. By definition of  best approximation vectors,
		 we have
		 \[
		 |\hat p_{n_k}|\leq |\hat v_k| \text{ and } |p_{n_k}\cdot\omega|\leq |v_k\cdot\omega|\leq  3e^{-|\hat a_{k}|^2}.
		 \]	 Therefore 
		 \[
		 \frac{|\log |p_{n_k}\cdot\omega||}{|\hat p_{n_k}|}\geq\frac{|\log 3e^{-|\hat a_{k}|^2}|}{|\hat a_k|+1}\rightarrow\infty 
		 \]
		 when $k$ tends to infinity which implies $L_{\omega}=\infty$.
		 
		 It remains to verify  that the coordinates of $\omega$ are rationally independent. Suppose that $z=(u,\ell,m)$ with $u\in\Zz^{d-2}$ and $\ell,m\in\Zz$, is such that 
		 $
		 z\cdot\omega=0.
		 $ 
		 By (3), $X=a_k\cdot\omega'+y_k$ where $y_k=-e^{-|\hat a_k|^2}+\sum_{i>k}x_i$ has an absolute value $\leq 3 e^{-|\hat a_k|^2}$. It follows that for all $k$,
		 \begin{align*}
		 	 z\cdot\omega=u\cdot\alpha'+\ell(a_k\cdot\omega'+y_k)+m=0
		 \end{align*}
		 which implies that
		 \[
		 |(u+\ell\hat a_k)\cdot\alpha'+\ell a_{k,d-1}+m|\leq 3\ell e^{-|\hat a_k|^2}.
		 \]
		 Letting $k \to \infty$ and using the fact that $\alpha'$ is also badly approximable as a linear form, we deduce that $\ell=0$ and then $u=0$ and $m=0$ because the coordinates of $\omega'$ are rationally independent. 
	\end{proof}

\section{Linearization}

In this section, we start by establishing several preparatory results and later prove Theorem~\ref{thm: main}.

\subsection{Banach spaces}

Given $\rho>0$, let $\A_{\rho}(\Tt^d,\Rr)$ denote the Banach algebra of analytic functions $f\in C^\omega(\Tt^d,\Rr)$ having finite norm
$$
|f|_{\rho}:=\sum_{k\in\Zz^d}|f_k|e^{\rho|k|}.
$$ 
Notice that $|fg|_\rho\leq |f|_\rho|g|_\rho$ for all $f,g\in \A_{\rho}(\Tt^d,\Rr)$.
Because the Fourier coefficients of any $f\in C^\omega(\Tt^d,\Rr)$ decay exponentially, we know that 
$$
C^{\omega}(\Tt^d,\Rr)=\cup_{\rho>0}\A_{\rho}(\Tt^d,\Rr).
$$
To control the derivatives of analytic functions we introduce the following norm,
$$
|f|_{\rho}':=\sum_{k\in\Zz^d}|f_k|(1+|k|)e^{\rho|k|},
$$ 
and define $\A_{\rho}'(\Tt^d,\Rr)\subset\A_{\rho}(\Tt^d,\Rr)$ to be the subset of functions that have the above norm finite. 
We also denote by $\A_{\rho}(\Tt^d,\Rr^d)$ the space of vector-valued functions $f=(f_1,\ldots,f_d)$ where each component belongs to $\A_{\rho}(\Tt^d,\Rr)$.  Notice that $\|f\|_{C^0}\leq |f|_\rho$.  We endow this space with the norm
$$
|f|_{\rho}:=|f_1|_{\rho}+\cdots+|f_d|_{\rho}.
$$
Similarly, we define $\A_{\rho}'(\Tt^d,\Rr^d)$ with norm $|\cdot|'_{\rho}$. It is clear that these spaces are Banach spaces. 
In order to simplify the notation, in what follows we will omit the explicit mention of the domain $\Tt^d$ and $\Rr^d$ in the notation of these spaces. That is, we will write $\A_{\rho}$ and $\A_{\rho}'$ instead of $\A_{\rho}(\Tt^d,\Rr^d)$ and $\A_{\rho}'(\Tt^d,\Rr^d)$, respectively. 
Several properties of these Banach spaces are well-known and easy to establish (see Appendix~\ref{banach spaces}).

\subsection{Elimination of far from resonant modes}

Given $\omega\in\Rr^d\setminus\{0\}$ and $\sigma>0$, we define the set of \textit{far from resonant modes} as
$$
K_\sigma^-(\omega) := \{k\in\Zz^d\colon |k\cdot \omega|>\sigma|k|\}.
$$
Since $\omega$ is fixed throughout this subsection, we shall write $K_\sigma^-$ instead of $K_\sigma^-(\omega)$. The complementary set is denoted by $K_\sigma^+$, called the \textit{resonant modes}. Associated to these sets we define the projections $\Kk^-_\sigma$ and $\Kk^+_\sigma$, acting on the Banach spaces previously defined, by restricting the Fourier modes to $K_\sigma^-$ and $K_\sigma^+$, respectively.  Clearly,  $\Kk_\sigma^++\Kk_\sigma^-=\Ii$,  the identity operator,  and $|\Kk_\sigma^\pm|_{\rho}\leq 1$.

Given $\rho,\varepsilon,\nu>0$, denote by $\Vs_\varepsilon$ the closed ball in $\A'_{\rho+\nu}$ centred at $\omega$ with radius $\varepsilon$, i.e.,
\begin{equation}\label{eq:Vepsilon}
\Vs_\varepsilon = \Vs_\varepsilon(\omega,\rho+\nu) := \{X\in \A'_{\rho+\nu}\colon |X-\omega|'_{\rho+\nu}\leq\varepsilon\}.
\end{equation}

The following theorem is proved in Appendix~\ref{sec:proof thm non-resonant modes}.

\begin{theorem}\label{thm unif}
Given $\rho>0$, $\nu>0$ and $\sigma>0$, let
$$
\varepsilon:=\frac{\sigma\delta}{8}\quad\text{and}\quad \delta:=\min\left\{\frac{\sigma}{14\sigma + 48|\omega|},\frac{\nu}2\right\}.
$$

For every $X=\omega+f\in \Vs_\varepsilon$, there exists $u\in \Kk_\sigma^-\A_{\rho}'$ such that:
\begin{enumerate}
\item $Y:=(\id+u)^*X \in \Kk_\sigma^+\A_{\rho}$,
\item $|u|'_{\rho}\leq\frac{8}{\sigma}|f|'_{\rho+\nu}$,
\item $|Y-\omega|_{\rho}\leq 2|f|'_{\rho+\nu}$.
\end{enumerate}
\end{theorem}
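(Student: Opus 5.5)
The plan is to recast the conjugacy equation as a fixed point problem for $u$ on the subspace $\Kk_\sigma^-\A'_\rho$ and to solve it by the contraction mapping principle. The identity $(\id+u)^*X=Y$ is equivalent to $X\circ(\id+u)=(I+Du)\,Y$. With $X=\omega+f$ and $Y=\omega+g$ it becomes
\[
\omega+f\circ(\id+u)=\omega+g+Du\,\omega+Du\,g .
\]
Write $L_\omega u:=Du\,\omega$. This is the Fourier multiplier $u_k\mapsto i(k\cdot\omega)u_k$, and it commutes with $\Kk_\sigma^\pm$. Applying $\Kk_\sigma^+$ and $\Kk_\sigma^-$, and requiring $u\in\Kk_\sigma^-\A'_\rho$ (so that $\Kk_\sigma^+L_\omega u=0$) and $g\in\Kk_\sigma^+\A_\rho$, we obtain the coupled system
\[
g=\Kk_\sigma^+\bigl[f\circ(\id+u)-Du\,g\bigr],\qquad
u=L_\omega^{-1}\Kk_\sigma^-\bigl[f\circ(\id+u)-Du\,g\bigr].
\]
On $K_\sigma^-$ we have $k\neq0$ and $|k\cdot\omega|>\sigma|k|$. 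Since $(1+|k|)/|k|\le 2$, this gives
\[
|L_\omega^{-1}\Kk_\sigma^- h|'_\rho\le \tfrac{2}{\sigma}|h|_\rho .
\]
This is the only place where the cone enters, and the gain of one derivative is exactly what lets the problem close in $\A'_\rho$.

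The key steps, in order, are as follows.

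First, for fixed $u$ with $|u|'_\rho\le\delta$, solve the $g$-equation. We have $|Du|_\rho\le|u|'_\rho\le\delta<1/14$, so $h\mapsto\Kk^+_\sigma[f\circ(\id+u)-Du\,h]$ is a contraction on $\A_\rho$. Its fixed point satisfies $|g|_\rho\le |f\circ(\id+u)|_\rho/(1-\delta)$.

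Second, use the composition estimate from Appendix~\ref{banach spaces}. For $|u|_\rho\le\delta\le\nu/2$ it should give $|f\circ(\id+u)|_\rho\le|f|_{\rho+\nu}$ and $|f\circ(\id+u)|'_\rho\le C|f|'_{\rho+\nu}$, using $|u|_\rho\le\nu$ and $(1+|k|)e^{\rho|k|}\le(1+|k|)e^{(\rho+\nu)|k|}e^{-\nu|k|}$. Together with the first step this yields item (3), $|g|_\rho\le 2|f|'_{\rho+\nu}$, once $\delta$ is small.

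Third, define
\[
T(u)=L_\omega^{-1}\Kk^-_\sigma\bigl[f\circ(\id+u)-Du\,g(u)\bigr]
\]
on the ball $\{u\in\Kk^-_\sigma\A'_\rho : |u|'_\rho\le 8|f|'_{\rho+\nu}/\sigma\}$. Note that $8\varepsilon/\sigma=\delta$, so this ball lies inside $|u|'_\rho\le\delta$. The self-map property follows from $\tfrac{2}{\sigma}\bigl(1+2\delta\bigr)|f|'_{\rho+\nu}\le 8|f|'_{\rho+\nu}/\sigma$, which gives item (2). A fixed point of $T$ then gives item (1).

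The main obstacle is the Lipschitz estimate for $T$, and in particular the term $f\circ(\id+u_1)-f\circ(\id+u_2)$. We must bound this by $\sup\|Df\|\cdot|u_1-u_2|_\rho$, where $Df$ is measured in $\A_{\rho+\nu/2}$ using the $'$-norm of $f$ on $\rho+\nu$. We must also control $g(u_1)-g(u_2)$, which enters through the $g$-equation and the term $Du\,g$. We will write $f\circ(\id+u_1)-f\circ(\id+u_2)=\int_0^1 Df\circ(\id+u_2+s(u_1-u_2))\,ds\,(u_1-u_2)$ and bound it by $|f|'_{\rho+\nu}|u_1-u_2|_\rho$. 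The terms coming from $Du\,g$ carry factors such as $|\omega|\,|u|'_\rho$ and $\sigma\delta$, because $g$ differs from $Y-\omega$ and $Du\,\omega=L_\omega u$ is large only through $|\omega|$. Multiplied by $2/\sigma$ from $L_\omega^{-1}$, the total Lipschitz constant should be of the form $\tfrac{2}{\sigma}(\varepsilon+c_1|\omega|\delta+c_2\sigma\delta)$. The choice $\delta\le\sigma/(14\sigma+48|\omega|)$ is designed to make this $\le 1/2$. I expect the bookkeeping of these constants (the 14 and the 48) to be the delicate part; the structure of the argument is standard.
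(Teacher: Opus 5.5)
Your argument is correct, and it takes a genuinely different route from the paper. The paper keeps the single unknown $u$ and the nonlinear map $\GG(u)=\Kk^-_\sigma(I+Du)^{-1}(\omega+f\circ(\id+u))$, and solves $\GG(u)=0$ by continuation, integrating $\dot u_t=-D\GG(u_t)^{-1}\GG(0)$. This requires:
\begin{itemize}
\item differentiability of $\GG$;
\item the bound $|D\GG(0)^{-1}|\le 4/\sigma$, whose core is exactly your $L_\omega^{-1}$ estimate;
\item the perturbation bound $|D\GG(u)-D\GG(0)|\le\sigma/8$ and a Neumann series;
\item local Lipschitz continuity of $u\mapsto D\GG(u)^{-1}$ and an ODE existence theorem in Banach spaces.
\end{itemize}
Item (3) is then read off by expanding $(I+Du)^{-1}$. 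You instead add $g=Y-\omega$ as a second unknown and invert $L_\omega$ exactly on $\Kk^-_\sigma$. After that, two applications of the contraction principle suffice.

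Your splitting also removes the $|\omega|$-dependence you anticipate. Since $\Kk^+_\sigma(Du\,\omega)=0$, $\omega$ never enters your perturbation. Use $|g(u)|_\rho\le|f|_{\rho+\nu}/(1-\delta)$, Proposition~\ref{prop:norms}(6), and $|Dv\,h|_\rho\le|v|'_\rho|h|_\rho$. You then get
\[
|g(u_1)-g(u_2)|_\rho\le\frac{(2-\delta)\varepsilon}{(1-\delta)^2}\,|u_1-u_2|'_\rho,\qquad |T(u_1)-T(u_2)|'_\rho\le\frac{2\varepsilon}{\sigma}\,\frac{2-\delta}{(1-\delta)^2}\,|u_1-u_2|'_\rho\le\delta\,|u_1-u_2|'_\rho .
\]
This uses only $\delta\le 1/14$, $\delta\le\nu/2$ and $8\varepsilon/\sigma=\delta$. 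The fixed point even satisfies $|u|'_\rho\le\frac{2}{\sigma(1-\delta)}|f|_{\rho+\nu}$ and $|Y-\omega|_\rho\le\frac{1}{1-\delta}|f|_{\rho+\nu}$, which are sharper than (2) and (3).

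The factor $48|\omega|$ in $\delta$ is needed only in the paper's formulation. There $\omega$ sits inside $(I+Du)^{-1}(\omega+\cdots)$ and produces terms such as $Du\,Dh\,\omega$ in Lemma~\ref{lem:DFu0} and $\sum_{n\ge2}(-Du)^n\omega$ in the final bound for $Y-\omega$. So the "delicate bookkeeping" you expect does not arise. Your route is more elementary and gives $|\omega|$-independent constants, while the paper's continuation scheme is the one inherited from its earlier renormalization work.

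For item (1), add one sentence: $\|Du\|_{C^0}\le|u|'_\rho\le\delta<1$. Hence $\id+u$ is a diffeomorphism by Lemma~\ref{lemma:diffeo} and $I+Du$ is pointwise invertible, so $(\id+u)^*X=\omega+g\in\Kk^+_\sigma\A_\rho$.
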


\subsection{Norm improvement}

Given $\omega\in\Rr^d\setminus\{0\}$ and $\sigma>0$, 
let
\begin{equation}\label{eq:ell}
\ell_{\sigma}=\ell_{\sigma}(\omega):=\frac{1}{\displaystyle\inf_{k\in K^+_\sigma(\omega)\setminus\{0\}}|k|}.
\end{equation}
Notice that $0<\ell_{\sigma}\leq1$.  The following estimates are immediate.

\begin{lemma}\label{lem:estimate ell} If $\sigma(t)=e^{-dt}$,  then 
$\ell_{\sigma(t)} \leq 2e^{-\Delta(t)}$ for every $t\geq0$.
\end{lemma}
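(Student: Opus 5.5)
The plan is to compare an arbitrary resonant mode $k$ directly with the lattice $\Phi^t(\Lambda_\omega)$: the vector of that lattice attached to $k$ has $\|\cdot\|_*$-norm at most $e^{-t}|k|$ when $k$ is resonant, while every nonzero vector of the lattice has norm at least $\delta(\Phi^t(\Lambda_\omega))=e^{W(t)}$. Combining the two gives $|k|\ge e^{\Delta(t)}$, and taking the infimum over $k$ gives the bound on $\ell_{\sigma(t)}$.

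\emph{Step 1: the lattice vector.} Fix $t\ge0$, put $\sigma=\sigma(t)=e^{-dt}$, and let $k\in K^+_\sigma(\omega)\setminus\{0\}$, so that $|k\cdot\omega|\le e^{-dt}|k|$. Since $\omega=(\alpha,1)$,
\[
k^\top M_\omega=(\hat k,\ \hat k\cdot\alpha+k_d)=(\hat k,\ k\cdot\omega),
\]
and therefore
\[
k^\top M_\omega E^t=\bigl(e^{-t}\hat k,\ e^{(d-1)t}\,k\cdot\omega\bigr).
\]
This is a nonzero vector of $\Phi^t(\Lambda_\omega)$, because $M_\omega E^t$ is invertible and $k\neq0$.

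\emph{Step 2: the two bounds.} By the definition of $\delta$ and of $W$, the lower bound is
\[
\max\bigl(e^{-t}|\hat k|,\ e^{(d-1)t}|k\cdot\omega|\bigr)\ \ge\ \delta(\Phi^t(\Lambda_\omega))=e^{W(t)}.
\]
For the upper bound we estimate each entry of the maximum by $e^{-t}|k|$:
\begin{itemize}
\item the first entry satisfies $e^{-t}|\hat k|\le e^{-t}|k|$;
\item the second entry, by resonance, satisfies $e^{(d-1)t}|k\cdot\omega|\le e^{(d-1)t}e^{-dt}|k|=e^{-t}|k|$.
\end{itemize}
Hence $e^{-t}|k|\ge e^{W(t)}$, that is, $|k|\ge e^{t+W(t)}=e^{\Delta(t)}$.

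\emph{Step 3: conclusion.} Taking the infimum over $k\in K^+_{\sigma(t)}\setminus\{0\}$ in the definition of $\ell_\sigma$ gives $\ell_{\sigma(t)}\le e^{-\Delta(t)}$. This is stronger than the claimed bound $\ell_{\sigma(t)}\le 2e^{-\Delta(t)}$; the factor $2$ leaves room for a norm convention differing by a constant.

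The only point needing care is the identity $k^\top M_\omega=(\hat k,k\cdot\omega)$, which makes the last coordinate of the lattice vector exactly the small divisor $k\cdot\omega$. No real obstacle is expected.
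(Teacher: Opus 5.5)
Your proof is correct and follows the paper's argument: like you, the paper compares the lattice vector $k^\top M_\omega E^t$ of a nonzero resonant mode $k$ with $\delta(\Phi^t(\Lambda_\omega))$, but it bounds $\|\cdot\|_*$ by the $\ell_1$-sum $e^{-t}|\hat k|+e^{(d-1)t}|k\cdot\omega|\le 2e^{-t}|k|$, which is where its factor $2$ comes from. Because you use the maximum defining $\|\cdot\|_*$ directly, you obtain the sharper bound $\ell_{\sigma(t)}\le e^{-\Delta(t)}$.
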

\begin{proof}
For every $t\geq0$ and $k\in K^+_{\sigma(t)}$,  
$$
\|k^\top M_\omega E^t\|_* \leq |k^\top M_\omega E^t| = e^{-t}|\hat{k}|+ e^{(d-1)t}|k\cdot \omega| \leq 2e^{-t} |k|,
$$
since $|k\cdot \omega|\leq e^{-dt}|k|$ and $|\hat{k}|\leq |k|$.  Thus,
$$
\delta(t)=\inf_{k\neq0} \|k^\top M_\omega E^t\|_* \leq  2 e^{-t}\inf_{k\in K^+_{\sigma(t)}}|k| = \frac{2e^{-t}}{\ell_{\sigma(t)}},
$$
from which the estimate follows.
\end{proof}

\begin{lemma}\label{lem:norm improvement}
If $X\in \Kk^+_\sigma\A_{\rho}$ and $\phi\geq0$, then
$$
|X-\Ee X|_{\rho}\leq e^{-\phi}|X-\Ee X|_{\rho+\ell_{\sigma}\phi}.
$$
\end{lemma}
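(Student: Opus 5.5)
The plan is to work coefficient by coefficient in Fourier space. Since $X\in\Kk^+_\sigma\A_\rho$, every nonzero Fourier mode $X_k$ of $X$ is supported on $K^+_\sigma$. Subtracting $\Ee X = X_0$ removes only the zero mode, so
$$
X-\Ee X=\sum_{k\in K^+_\sigma\setminus\{0\}} X_k e^{ik\cdot x}.
$$
Both norms in the statement are therefore sums over $k\in K^+_\sigma\setminus\{0\}$ only, and it suffices to compare the weights $e^{\rho|k|}$ and $e^{(\rho+\ell_\sigma\phi)|k|}$ for such $k$. For vector-valued $X$ the norm is the sum of the component norms, so we may argue componentwise.

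The key step is to use the definition \eqref{eq:ell} of $\ell_\sigma$. For every $k\in K^+_\sigma\setminus\{0\}$ we have $|k|\geq \inf_{k'\in K^+_\sigma\setminus\{0\}}|k'| = 1/\ell_\sigma$, that is, $\ell_\sigma|k|\geq 1$. Since $\phi\geq0$, this gives $\ell_\sigma\phi|k|\geq\phi$, and hence
$$
e^{\rho|k|}=e^{-\ell_\sigma\phi|k|}\,e^{(\rho+\ell_\sigma\phi)|k|}\leq e^{-\phi}\,e^{(\rho+\ell_\sigma\phi)|k|}.
$$
Multiplying by $|X_k|$ and summing over $k\in K^+_\sigma\setminus\{0\}$ yields
$$
|X-\Ee X|_\rho\leq e^{-\phi}|X-\Ee X|_{\rho+\ell_\sigma\phi}.
$$

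There is no real obstacle; the argument is a one-line weight comparison. Two points need checking. First, the resonant cone must not contain nonzero vectors of arbitrarily small norm, which is automatic for integer vectors since $|k|\geq1$; this is the same fact behind $\ell_\sigma\leq1$. Second, if the right-hand side is infinite, the inequality holds trivially.
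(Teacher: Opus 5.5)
Your proposal is correct and follows the paper's own argument: restrict the sum to $k\in K^+_\sigma\setminus\{0\}$, then use $\ell_\sigma|k|\geq 1$ there so that each weight loses a factor $e^{-\ell_\sigma\phi|k|}\leq e^{-\phi}$. The paper does this in one line, bounding all these factors at once by $\exp\bigl(-\ell_\sigma\phi\min_{k\in K^+_\sigma\setminus\{0\}}|k|\bigr)=e^{-\phi}$.
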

\begin{proof}
We have,
\begin{align*}
|X-\Ee X|_{\rho}&=\sum_{k\in  K^+_\sigma(\omega)\setminus\{0\}} |X_k|e^{\rho|k|}\\
&\leq \exp\left(-\ell_{\sigma}\phi\min_{k\in K^+_\sigma\setminus\{0\}}|k|\right)|X-\Ee X|_{\rho+\ell_{\sigma}\phi}\\
&=e^{-\phi}|X-\Ee X|_{\rho+\ell_{\sigma}\phi}.
\end{align*}
\end{proof}

%
%

\subsection{Rotation set and the constant term}\label{sec:rotation vector}
Given $X\in C^1(\Tt^d,\Rr^d)$, let $\phi_X^t$ denote the flow of $X$ on $\Tt^d$. Denote by $\Phi_X^t$ the lift of $\phi_X^t$ to the universal cover $\Rr^d$ of $\Tt^d$.

The \textit{rotation set} $\rot(X)$ of $X$ is the set of vectors $\omega\in\Rr^d$ for which there are sequences $(x_n)_{n\geq0}$ in $\Rr^d$ and $(t_n)_{n\geq0}$ in $\Rr^+$ with
$t_n\nearrow+\infty$ such that
\begin{equation}\label{eq: rot set}
\omega=\lim_{n\to+\infty}\frac{\Phi^{t_n}_X(x_n)-x_n}{t_n}.
\end{equation}
The rotation set is a compact and connected set~\cite{zbMATH04084609}.

\begin{lemma}\label{invariant rot}
If $h$ is a diffeomorphism of $\Tt^d$ of the form $h=\id+u$ with $u\in C^1(\Tt^d, \Rr^d)$, then
$$
\rot(h^*X)=\rot(X).
$$
\end{lemma}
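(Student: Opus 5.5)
The plan is to compare the orbits of $h^*X$ and $X$ directly through the conjugacy $h$, and to show that the two displacement quantities in \eqref{eq: rot set} differ by $O(1/t_n)$. Write $Y=h^*X$ and let $\tilde h=\id+\tilde u$ denote the lift of $h$ to $\Rr^d$, where $\tilde u$ is $2\pi\Zz^d$-periodic. Since $u\in C^1(\Tt^d,\Rr^d)$ is continuous on the compact torus, it is bounded: $\|u\|_{C^0}=:C<\infty$.

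The first step is the conjugacy relation between the flows. By definition of the pullback, $Dh\cdot Y = X\circ h$, so $h$ maps integral curves of $Y$ to integral curves of $X$. On the lifts this reads $\tilde h\circ\Phi_Y^t=\Phi_X^t\circ\tilde h$. This identity holds on the lift with no integer translation ambiguity, because $\tilde h=\id+\tilde u$ is homotopic to the identity; that is the only point where the special form $h=\id+u$ is used.

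For the inclusion $\rot(Y)\subseteq\rot(X)$, take $\omega\in\rot(Y)$ with sequences $x_n$ and $t_n\nearrow\infty$. Set $y_n=\tilde h(x_n)$. Then
\[
\Phi_X^{t_n}(y_n)-y_n=\tilde h(\Phi_Y^{t_n}(x_n))-\tilde h(x_n)=\Phi_Y^{t_n}(x_n)-x_n+\tilde u(\Phi_Y^{t_n}(x_n))-\tilde u(x_n).
\]
The last two terms are bounded by $2C$ in norm. Dividing by $t_n$ and letting $n\to\infty$ therefore gives the same limit $\omega$, so $\omega\in\rot(X)$.

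For the reverse inclusion, apply the same argument to $h^{-1}$. Its lift is $\tilde h^{-1}=\id+\tilde v$, where $\tilde v(y)=-\tilde u(\tilde h^{-1}(y))$. This $\tilde v$ is again periodic and bounded by $C$, and $(h^{-1})^*Y=X$. I do not expect any genuine obstacle here. The only care needed is in justifying the lifted conjugacy identity and in noting that $h^{-1}$ also has the form $\id+v$ with $v$ bounded, and both follow from the compactness of the torus and the fact that $h$ is isotopic to the identity.
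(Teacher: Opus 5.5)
Your argument is correct and is essentially the paper's proof: conjugate the lifted flows by $\id+\tilde u$, observe that the two displacements differ by a bounded periodic term, divide by $t_n$, and obtain the other inclusion by applying the same reasoning to $h^{-1}=\id+v$. The paper does the mirror image, proving $\rot(X)\subset\rot(h^*X)$ directly with $y_n=H^{-1}(x_n)$ and calling the converse ``similar''. One small correction: the lifted identity $\tilde h\circ\Phi_Y^t=\Phi_X^t\circ\tilde h$ holds for any lift of any diffeomorphism, so the form $h=\id+u$ is not used there; it is used where you bound $\tilde u$ by $C$.
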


\begin{proof}
Write lifts of $h$ and $h^{-1}$ as $H=\id+U$ and $H^{-1}=\id+V$, respectively, with $U,V\in C^1(\Rr^d,\Rr^d)$ that are $\Zz^d$-periodic.
Notice that $\Phi^t_{h^*X}=H^{-1}\circ \Phi^t_X\circ H$.
Thus, 
$$
\Phi^t_{h^*X} -\id
= (\Phi^t_X-\id)\circ H
+V\circ \Phi^t_X \circ H
+U.
$$

Suppose that $\omega\in\rot(X)$.
Given $(x_n)_{n\geq0}$ and $(t_n)_{n\geq0}$ so that ~\eqref{eq: rot set} holds, choose a sequence with general term $y_n=H^{-1}(x_n)$.
Then,
\begin{eqnarray*}
\lim_{n\to+\infty}
\frac{\Phi^{t_n}_{h^*X}(y_n)-y_n}{t_n}
&=&
\lim_{n\to+\infty}
\frac{\Phi^{t_n}_{X}(x_n)-x_n}{t_n}
+\frac{V\circ \Phi^t_X \circ H(y_n)
+U(y_n)}{t_n} \\
&=& \omega +0.
\end{eqnarray*}
That is, $\omega\in \rot(h^* X)$ and $\rot(X)\subset \rot(h^* X)$.
Since $h$ is a diffeomorphism, we can similarly show that $\rot(h^* X)\subset  \rot(X)$.
\end{proof}

For $Y\in C^0(\Tt^d,\Rr^d)$, denote $\|Y\|_{\infty,1}=\sum_{i=1}^d\sup_{x\in\Tt^d}|Y_i(x)|$.

	\begin{lemma}
	\label{lem:omega minus constant}
		If $\omega\in\rot(X)$, 
		then 
		$$
		|\Ee X-\omega|\leq  \|X-\Ee X\|_{\infty,1}.
		$$
	\end{lemma}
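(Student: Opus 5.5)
We argue componentwise, using the definition of the rotation set. Write $X=c+g$ with $c=\Ee X$ and $g=X-\Ee X$, so that $\|g\|_{\infty,1}=\|X-\Ee X\|_{\infty,1}$. Along any trajectory of the lifted flow $\Phi^t_X$,
$$
\Phi^t_X(x)-x=\int_0^t X(\Phi^s_X(x))\,ds = tc+\int_0^t g(\Phi^s_X(x))\,ds .
$$
The only input about $g$ is the pointwise bound $|g_i(y)|\le \sup_{\Tt^d}|g_i|$, which holds because $g$ is periodic.

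Taking the $i$-th coordinate and dividing by $t>0$ gives
$$
\left|\frac{(\Phi^t_X(x)-x)_i}{t}-c_i\right|\le \frac1t\int_0^t |g_i(\Phi^s_X(x))|\,ds\le \sup_{x\in\Tt^d}|g_i(x)|,
$$
uniformly in $x\in\Rr^d$ and $t>0$. Since $\omega\in\rot(X)$, choose $(x_n)$ and $t_n\nearrow\infty$ as in \eqref{eq: rot set}. Applying the bound at $(x_n,t_n)$ and letting $n\to\infty$ yields
$$
|\omega_i-c_i|\le \sup_{\Tt^d}|g_i| \quad\text{for each } i.
$$
Summing over $i=1,\dots,d$ in the $\ell_1$-norm gives $|\Ee X-\omega|\le \|X-\Ee X\|_{\infty,1}$.

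The argument has no real obstacle. The one point to confirm is that the flow exists for all times, which holds because $X$ is $C^1$ (indeed analytic) and bounded on the compact torus. The bound is uniform in the initial point, so the choice of the sequence $(x_n)$ plays no role.
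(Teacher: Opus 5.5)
Your proof is correct and is essentially the paper's argument: you express $\omega$ as the limit of time-averages $\frac{1}{t_n}\int_0^{t_n} X(\Phi^s_X(x_n))\,ds$, control each coordinate by $\sup_{\Tt^d}|X_i-\Ee X_i|$, let $n\to\infty$, and sum over $i$. The one cosmetic difference is that the paper uses continuity to pick a time $s_{i,n}$ at which $X_i(\Phi^{s_{i,n}}_X(x_n))$ equals the average, whereas you bound the average of $g_i=X_i-\Ee X_i$ directly by its supremum, which is slightly more direct.
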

	\begin{proof}
Given $(x_n)_{n\geq0}$ and $(t_n)_{n\geq0}$ so that ~\eqref{eq: rot set} holds.
So,
\[
\omega+\varepsilon_n=\frac{\Phi^{t_n}_X(x_n)-x_n}{t_n}=\frac{1}{t_n}\int_0^{t_n}X(\Phi^s_X(x_n))ds
\]
where $\varepsilon_n\rightarrow 0$ as $n\to+\infty$. 
Since the function $s\mapsto X(\Phi^s_X(x_n))$ is continuous, for each $i\in\{1,\dots,d\}$ and each $n$, there exists a time $s_{i,n}\in[0,t_n]$ such that $X_i(\Phi^{s_{i,n}}_X(x_n))=\omega_i+(\varepsilon_n)_i$. Therefore,
		\[
		|\Ee X_i-\omega_i|\leq |\Ee X_i-X_i(\Phi^{s_{i,n}}_X(x_n))|+|(\varepsilon_n)_i|\leq \sup_{y\in\Tt^d}|\Ee X_i-X_i(y)|+|(\varepsilon_n)_i|.
		\]
Summing over $i$, we obtain $|\Ee X-\omega|\leq \|X-\Ee X\|_{\infty,1}+|\varepsilon_n|$, and letting $n$ go to infinity, we obtain $|\Ee X-\omega|\leq \|X-\Ee X\|_{\infty,1}$. 
	\end{proof}

\begin{remark}\label{rem:omega minus constant}
Given any $Y\in\mathcal A_{\rho}(\Tt^d,\Rr^d)$ and $\rho> 0$, we have
		\begin{align*}
			\|Y\|_{\infty,1}&=\sum_{i=1}^d\sup_{x\in\Tt^d}|Y_i(x)|=\sum_{i=1}^d\sup_{x\in\Tt^d}|\sum_{k\in\Zz^d}Y_{i,k}e^{ik\cdot x}|\\
			&\leq\sum_{i=1}^d\sum_{k\in\Zz^d}|Y_{i,k}|\leq \sum_{i=1}^d\sum_{k\in\Zz^d}|Y_{i,k}|e^{\rho|k|}
			=|Y|_{\rho}.
		\end{align*}
Therefore,  under the hypothesis of the previous lemma,  if  $X\in\mathcal A_{\rho}(\Tt^d,\Rr^d)$,  then 
$$
|\Ee X-\omega|\leq\|X-\Ee X\|_{\infty,1}\leq |X-\Ee X|_{\rho}.
$$
A simple use of the triangle inequality gives
$$
2^{-1}|X-\omega|_\rho \leq |X-\Ee X|_{\rho}\leq 2|X-\omega|_\rho.
$$
\end{remark}

\subsection{The $n$-th step operator}
Let $\omega\in\Rr^d$ be rationally independent, i.e. $\omega\cdot k\not=0$ for every $k\in\Zz^d\setminus\{0\}$.  We consider three sequences of positive real numbers:

\begin{itemize}
\item $(\rho_n)_{n\geq0}$  represents the \textit{radius of analyticity of the vectors fields},
\item $(\nu_n)_{n\geq0}$  represents the \textit{loss of analyticity due to the elimination of far from resonant modes},
\item $(\sigma_n)_{n\geq0}$ represents the \textit{width of the cone of the resonant modes}.
\end{itemize}

We call $\omega$ together with these three sequences the \textit{initial data}.  

Given an initial data, we define the following sets,
$$
\Us_{n-1}:=\left\{X\in \Vs_{\varepsilon_{n-1}}\colon \omega\in\rot(X)\right\}, \quad n\geq1,
$$
where $\Vs_{\varepsilon_{n-1}} := \Vs_{\varepsilon_{n-1}}(\omega,\rho_{n-1}+\nu_{n-1})$ is defined in \eqref{eq:Vepsilon} with 
$\varepsilon_{n-1}$ as in Theorem~\ref{thm unif}, 
\begin{equation}\label{eq:varepsilon n minus 1}
\varepsilon_{n-1}:=\frac{\sigma_{n-1}}{8}\min\left\{\frac{\sigma_{n-1}}{14\sigma_{n-1} + 48|\omega|},\frac{\nu_{n-1}}2\right\},
\end{equation}
using $\sigma=\sigma_{n-1}$, $\nu=\nu_{n-1}$ and $\rho=\rho_{n-1}$.
Observe that the sets $\Us_{n-1}$ are determined solely by the initial data. 

The $n$-th step operator $\UU_n\colon \Us_{n-1}\to \A$ is the operator $X\in \Vs_{\varepsilon_{n-1}} \mapsto Y$ given by Theorem~\ref{thm unif}.  Notice that $\omega=\UU_n(\omega)$ (see (1) of subsection~\ref{sec:rotation vector}).

Let
\begin{equation}\label{eq:phin}
\ell_{n-1}:=\ell_{\sigma_{n-1}}\quad \text{and}\quad \phi_n:=\log^+\left(8\frac{\nu_n+1}{\nu_n}\frac{\varepsilon_{n-1}}{\varepsilon_n}\right),
\end{equation}
where $\ell_{\sigma_{n-1}}$ is defined in \eqref{eq:ell}. Notice that both $\ell_{n-1}$ and $\phi_n$ are non-negative. The following result is central to the iterative scheme.
\begin{lemma}[$n$-th step estimate]\label{lem:ren}
If
$\rho_{n-1}\geq \rho_n +2\nu_{n}+\ell_{n-1}\phi_n$ and $X_{n-1}\in \Us_{n-1}$, then
$$
|\UU_n(X_{n-1})-\omega|'_{\rho_n+\nu_n}\leq \frac{\varepsilon_n}{\varepsilon_{n-1}}|X_{n-1}-\omega|'_{\rho_{n-1}+\nu_{n-1}}.
$$
In particular, $\UU_n(\Us_{n-1})\subset \Us_{n}$.
\end{lemma}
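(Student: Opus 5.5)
The approach is to combine Theorem~\ref{thm unif}, the rotation-set control of the mean (Lemmas~\ref{invariant rot} and \ref{lem:omega minus constant}, with Remark~\ref{rem:omega minus constant}), and the norm improvement of Lemma~\ref{lem:norm improvement}, in that order.

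Write $X_{n-1}=\omega+f$ and $Y:=\UU_n(X_{n-1})=(\id+u)^*X_{n-1}$. Since $X_{n-1}\in\Vs_{\varepsilon_{n-1}}$, Theorem~\ref{thm unif} applies with $(\rho,\nu,\sigma)=(\rho_{n-1},\nu_{n-1},\sigma_{n-1})$. It gives $Y\in\Kk^+_{\sigma_{n-1}}\A_{\rho_{n-1}}$ and $|Y-\omega|_{\rho_{n-1}}\le 2|f|'_{\rho_{n-1}+\nu_{n-1}}$. Since $\id+u$ is a diffeomorphism of the form required in Lemma~\ref{invariant rot}, we have $\omega\in\rot(Y)$.

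First I split off the mean:
\[
|Y-\omega|'_{\rho_n+\nu_n}\le |Y-\Ee Y|'_{\rho_n+\nu_n}+|\Ee Y-\omega|.
\]
By Lemma~\ref{lem:omega minus constant} and Remark~\ref{rem:omega minus constant}, $|\Ee Y-\omega|\le |Y-\Ee Y|_{\rho_n+\nu_n}\le |Y-\Ee Y|'_{\rho_n+\nu_n}$. So the whole quantity is at most $2|Y-\Ee Y|'_{\rho_n+\nu_n}$.

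Next I trade the weight $(1+|k|)$ for an extra strip of width $\nu_n$. The elementary bound
\[
\sup_{x\ge0}(1+x)e^{-\nu x}\le \frac{\nu+1}{\nu}
\]
holds: for $\nu\ge1$ the supremum is $1$, and for $\nu<1$ it is $e^{\nu-1}/\nu\le 1/\nu$. It gives $|Y-\Ee Y|'_{\rho_n+\nu_n}\le \frac{\nu_n+1}{\nu_n}|Y-\Ee Y|_{\rho_n+2\nu_n}$.

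Then I apply Lemma~\ref{lem:norm improvement} with $\sigma=\sigma_{n-1}$ and $\phi=\phi_n$. This is legitimate because the nonzero modes of $Y-\Ee Y$ lie in $K^+_{\sigma_{n-1}}$. Using the hypothesis $\rho_n+2\nu_n+\ell_{n-1}\phi_n\le\rho_{n-1}$ and monotonicity of the norms in $\rho$, we get
\[
|Y-\Ee Y|_{\rho_n+2\nu_n}\le e^{-\phi_n}|Y-\Ee Y|_{\rho_{n-1}}.
\]
Remark~\ref{rem:omega minus constant} and Theorem~\ref{thm unif}(3) then give
\[
|Y-\Ee Y|_{\rho_{n-1}}\le 2|Y-\omega|_{\rho_{n-1}}\le 4|f|'_{\rho_{n-1}+\nu_{n-1}}.
\]

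Collecting the factors yields
\[
|Y-\omega|'_{\rho_n+\nu_n}\le 8\,\frac{\nu_n+1}{\nu_n}\,e^{-\phi_n}\,|f|'_{\rho_{n-1}+\nu_{n-1}}.
\]
By the definition \eqref{eq:phin}, either $e^{\phi_n}=8\frac{\nu_n+1}{\nu_n}\frac{\varepsilon_{n-1}}{\varepsilon_n}$, or $\phi_n=0$ and that quantity is already $\le1$. In both cases the prefactor is at most $\varepsilon_n/\varepsilon_{n-1}$, which is the claimed estimate.

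For the inclusion, the estimate gives $|Y-\omega|'_{\rho_n+\nu_n}\le\varepsilon_n$, so in particular $Y\in\A'_{\rho_n+\nu_n}$ and $Y\in\Vs_{\varepsilon_n}$. Together with $\omega\in\rot(Y)$ this shows $Y\in\Us_n$.

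The only delicate points are bookkeeping. The first is making sure the constant $8$ in $\phi_n$ exactly absorbs the two factors of $2$ from passing between $\omega$ and the mean, which appear once before and once after the norm improvement. The second is handling the $\log^+$ case.
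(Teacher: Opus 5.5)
Your proof is correct and is essentially the paper's argument: the same chain of Proposition~\ref{prop:norms}-type weight trading, the rotation-set bound on the mean, Lemma~\ref{lem:norm improvement} with $\phi_n$, Remark~\ref{rem:omega minus constant} and Theorem~\ref{thm unif}(3), ending with the same prefactor $8\frac{\nu_n+1}{\nu_n}e^{-\phi_n}$. The differences are cosmetic: you split off the mean in the primed norm before converting to the unprimed one, whereas the paper converts first. You also spell out the $\log^+$ case, which the paper leaves implicit.
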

\begin{proof}
Given $X_{n-1}\in \Us_{n-1}$, by Proposition~\ref{prop:norms}, Theorem~\ref{thm unif}, Lemma~\ref{lem:norm improvement} and Remark~\ref{rem:omega minus constant}, we have

\begin{align*}
|\UU_n(X_{n-1})-\omega|'_{\rho_n+\nu_n}&\leq\frac{\nu_n+1}{\nu_n} |\UU_n(X_{n-1})-\omega|_{\rho_n+2\nu_n}\tag*{(Proposition~\ref{prop:norms})}\\
&\leq 2\frac{\nu_n+1}{\nu_n}|\UU_{n}(X_{n-1})-\Ee(\UU_{n}(X_{n-1}))|_{\rho_n+2\nu_n}\tag*{(Remark~\ref{rem:omega minus constant})}\\
&\leq2\frac{\nu_n+1}{\nu_n}e^{-\phi_n}|\UU_{n}(X_{n-1})-\Ee(\UU_{n}(X_{n-1}))|_{\rho_{n-1}}\tag*{(Lemma~\ref{lem:norm improvement})}\\
&\leq 4\frac{\nu_n+1}{\nu_n}e^{-\phi_n}|\UU_{n}(X_{n-1})-\omega|_{\rho_{n-1}}\tag*{(Remark~\ref{rem:omega minus constant})}\\
&\leq 8\frac{\nu_n+1}{\nu_n}e^{-\phi_n}|X_{n-1}-\omega|'_{\rho_{n-1}+\nu_{n-1}}\tag*{(Theorem~\ref{thm unif})}.
\end{align*}

By our choice of $\phi_n$ (see equation \eqref{eq:phin}), we have
$$
8\frac{\nu_n+1}{\nu_n}e^{-\phi_n}\leq \frac{\varepsilon_{n}}{\varepsilon_{n-1}}. 
$$
This shows the inequality in the statement of the lemma and $X_n:=\UU_n(X_{n-1})\in \Vs_{\varepsilon_{n}}$. Since $\omega\in \rot(X_{n-1})$,  by Lemma~\ref{invariant rot} we also have $\omega\in \rot (X_n)$.  Thus,  $X_n\in \Us_n$.

\end{proof}

\begin{proposition}\label{prop:ren}
Given $N\geq1$,  if
\begin{enumerate}
\item $\rho_{0}> \sum_{n=1}^{N}2\nu_{n}+\ell_{n-1}\phi_n$,  
\item $\rho_n \leq \rho_{n-1}-2\nu_n-\ell_{n-1}\phi_k$,  $n=1,\ldots,N$,
\end{enumerate}
then $\UU_n(\Us_{n-1})\in \Us_n$ for all $n=1,\ldots,N$.
\end{proposition}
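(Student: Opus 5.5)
The plan is to iterate Lemma~\ref{lem:ren} step by step, proving the claim by induction on $n\in\{1,\dots,N\}$. Hypothesis (2) is exactly what Lemma~\ref{lem:ren} needs at step $n$. Indeed, reading $\phi_k$ as the obvious typo for $\phi_n$, condition (2) rearranges to
\[
\rho_{n-1}\geq \rho_n+2\nu_n+\ell_{n-1}\phi_n .
\]
So for every $n\le N$ the lemma applies to any $X_{n-1}\in\Us_{n-1}$ and gives $\UU_n(X_{n-1})\in\Us_n$. In other words, $\UU_n(\Us_{n-1})\subset\Us_n$, which is the statement.

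Hypothesis (1) is used to ensure that all the radii stay positive, so that every space $\A'_{\rho_n+\nu_n}$ and every ball $\Vs_{\varepsilon_n}$ appearing in the chain is meaningful. Summing (2) telescopically from $1$ to $n$ gives
\[
\rho_n\le\rho_0-\sum_{j=1}^{n}\bigl(2\nu_j+\ell_{j-1}\phi_j\bigr).
\]
This is only an upper bound, so (1) does not by itself force $\rho_n>0$. I would therefore interpret the admissible radii as those with $\rho_n>0$ satisfying (2). Condition (1) is precisely the guarantee that such a choice exists, for example equality in (2), which leaves
\[
\rho_N=\rho_0-\sum_{n=1}^{N}\bigl(2\nu_n+\ell_{n-1}\phi_n\bigr)>0 .
\]
With positive radii, Theorem~\ref{thm unif} is applicable at each step with $\rho=\rho_{n-1}$, $\nu=\nu_{n-1}$ and $\sigma=\sigma_{n-1}$. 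This is what makes $\UU_n$ well defined on $\Us_{n-1}$.

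The composition $\UU_N\circ\cdots\circ\UU_1$ is then defined on $\Us_0$. The induction also yields the quantitative bound obtained by multiplying the estimates of Lemma~\ref{lem:ren}; the ratios $\varepsilon_n/\varepsilon_{n-1}$ telescope, giving
\[
|X_N-\omega|'_{\rho_N+\nu_N}\le \frac{\varepsilon_N}{\varepsilon_0}\,|X_0-\omega|'_{\rho_0+\nu_0}.
\]

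There is no real obstacle here, since all the analytic work sits in Lemma~\ref{lem:ren} and Theorem~\ref{thm unif}. The only delicate point is the bookkeeping just described: being explicit that (1) is what guarantees positive radii, and hence well-defined spaces, at every step.
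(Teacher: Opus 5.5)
Your proof is correct and matches the paper's, which simply applies Lemma~\ref{lem:ren} at each step $n=1,\dots,N$ using hypothesis (2), with $\phi_k$ read as $\phi_n$ and $\in$ read as $\subset$. Your discussion of (1) is harmless but unnecessary: the $\rho_n$ are positive by the definition of the initial data, so (1) actually follows from (2) by telescoping.
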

\begin{proof}
Immediate from Lemma~\ref{lem:ren} applying it iteratively through $n=1,\ldots,N$.
\end{proof}

\subsection{Choice of initial data}
Let $\vecomega=(\vecalf,1)\in\Rr^d$ be rationally independent.   Associated to $\omega$, we have the infinite sequence $(\tau_n)_{n\geq0}$ of 
local maximizers of $W$ (see Lemma~\ref{lem:W}).  Define,
$$
\sigma_n:= e^{-d\tau_n}\quad \text{and}\quad \nu_n:=\sigma_n.
$$  
Notice that $\sigma_n\leq 1$ and $\sigma_n \searrow 0$.
These sequences constitute part of the initial data. It remains to specify the sequence $(\rho_n)_{n\geq0}$.  

\subsection{Iterative scheme}
Given $\rho_0>0$, suppose that $X_0\in\Vs_{\varepsilon_0}$ is such that $\omega\in \rot(X_0)$, i.e., $X_0\in\Us_0$,  where $\varepsilon_0$ is given in formula \eqref{eq:varepsilon n minus 1} with $\sigma_0=\nu_0=e^{-d\tau_0}=1$.  Recall that $g_{d-1}=2\times 5^{d-1}+1$.

\begin{theorem}\label{thm:infrenormalization}
If
$
B_\omega=\sum_{n=0}^{\infty} e^{-\Delta_n}\tau_{n+1}<\infty
$
and $(\rho_n)_{n\geq0}$ is defined by
$$
\rho_n=\rho_{0}-4\log(2^{10}|\omega|)\sum_{k=1}^ne^{-\Delta_{k-1}}-6d\sum_{k=1}^n e^{-\Delta_{k-1}}\tau_k,\quad n\geq1
$$
with initial $\rho_0$ satisfying $\rho_0 > 8\log(2^{10}|\omega|)g_{d-1}+ 6d B_\omega$,  then  $\inf_n\rho_n>0$ and
   $$X_n:=\UU_n(X_{n-1})\in \Us_n,\quad\forall\,n\geq1.$$
\end{theorem}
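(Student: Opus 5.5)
We reduce the theorem to Proposition~\ref{prop:ren}, i.e. to iterating Lemma~\ref{lem:ren}. For every $n\geq1$ we will show
\[
2\nu_n+\ell_{n-1}\phi_n\le 4\log(2^{10}|\omega|)e^{-\Delta_{n-1}}+6d\,e^{-\Delta_{n-1}}\tau_n .
\]
Granting this, the defining formula for $\rho_n$ gives $\rho_{n-1}-\rho_n\ge 2\nu_n+\ell_{n-1}\phi_n$, which is the hypothesis of Lemma~\ref{lem:ren}. Positivity of $\inf_n\rho_n$ follows from \eqref{sum 1/p_n}: since $\sum_k e^{-\Delta_{k-1}}\le 2g_{d-1}$ and $\sum_k e^{-\Delta_{k-1}}\tau_k=B_\omega$, we get
\[
\rho_n\ \ge\ \rho_0-8\log(2^{10}|\omega|)g_{d-1}-6dB_\omega>0 .
\]
Induction on $n$ with Lemma~\ref{lem:ren} then gives $X_n=\UU_n(X_{n-1})\in\Us_n$ for all $n$, starting from $X_0\in\Us_0$.

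It remains to prove the per-step inequality, and we bound the two terms separately.

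\emph{The term $\ell_{n-1}$.} By Lemma~\ref{lem:estimate ell} with $t=\tau_{n-1}$ (so that $\sigma_{n-1}=e^{-d\tau_{n-1}}$), we have $\ell_{n-1}\le 2e^{-\Delta_{n-1}}$.

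\emph{The term $\nu_n$.} Since $\nu_n=e^{-d\tau_n}$ and $\Delta$ has slope $\le d$ with $\Delta(0)=0$, we get $\Delta(\tau_n)\le d\tau_n$. Hence $\nu_n\le e^{-\Delta_n}\le e^{-\Delta_{n-1}}$, because $\Delta$ is nondecreasing. This can be absorbed into the constant $4\log(2^{10}|\omega|)e^{-\Delta_{n-1}}$, possibly using $\log(2^{10}|\omega|)\ge 1$, which holds since $|\omega|\ge1$.

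\emph{The term $\phi_n$.} Here we compute explicitly. Since $\sigma_n\le1$ and $\nu_n=\sigma_n\le 1$, we have $\sigma_n/(14\sigma_n+48|\omega|)\le 1/(48|\omega|)$. The minimum in \eqref{eq:varepsilon n minus 1} is therefore comparable to $\sigma_n/|\omega|$, up to constants, and $\varepsilon_n$ lies between constant multiples of $\sigma_n^2/|\omega|$ and $\sigma_n^2$. More precisely, one of the two options gives $\sigma_n^2/16$ and the other $\sigma_n^2/(8(14+48|\omega|))$. Since $\sigma_{n-1}\le1$, we obtain
\[
\frac{\varepsilon_{n-1}}{\varepsilon_n}\le C|\omega|\,\sigma_n^{-2}\le C|\omega|e^{2d\tau_n},
\qquad
\frac{\nu_n+1}{\nu_n}\le 2e^{d\tau_n}.
\]
Therefore
\[
\phi_n\le \log\bigl(C'|\omega|\bigr)+3d\tau_n ,
\]
with an absolute constant $C'$ that we expect to be at most $2^{10}$ (roughly $16\cdot 62\cdot 8\cdot 2$, up to bookkeeping). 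Multiplying by $\ell_{n-1}\le 2e^{-\Delta_{n-1}}$ gives
\[
\ell_{n-1}\phi_n\le 2\log(2^{10}|\omega|)e^{-\Delta_{n-1}}+6d\,e^{-\Delta_{n-1}}\tau_n .
\]
Combined with the bound on $2\nu_n$, this yields the claimed per-step inequality.

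The main obstacle is this constant bookkeeping. We must check carefully which branch of the minimum defining $\varepsilon_{n-1}$ and $\varepsilon_n$ is active; the ratio estimate needs an upper bound on $\varepsilon_{n-1}$ and a lower bound on $\varepsilon_n$, and these may use different branches. The aim is for everything to fit inside $\log(2^{10}|\omega|)$ with the factor $4$.
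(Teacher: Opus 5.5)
Your proposal is correct and is essentially the paper's proof: the same reduction to Lemma~\ref{lem:ren} (via Proposition~\ref{prop:ren}), the same per-step bounds $\ell_{n-1}\le 2e^{-\Delta_{n-1}}$, $2\nu_n\le 2e^{-\Delta_{n-1}}$ (from $\Delta(t)\le dt$) and $\phi_n\le\log(2^{10}|\omega|)+3d\tau_n$, and the same use of \eqref{sum 1/p_n} together with $B_\omega<\infty$ to get $\inf_n\rho_n>0$. The constant check you left open does close: $\varepsilon_{n-1}\le\sigma_{n-1}\nu_{n-1}/16\le 1/16$ and $\varepsilon_n=\sigma_n^2/(112\sigma_n+384|\omega|)\ge\sigma_n^2/(496|\omega|)$ give $8\frac{\nu_n+1}{\nu_n}\frac{\varepsilon_{n-1}}{\varepsilon_n}\le 496|\omega|\sigma_n^{-3}$, so $C'=496\le 2^{10}$; your rough product $16\cdot 62\cdot 8\cdot 2$ is miscounted and exceeds $2^{10}$, but the correct value is well within the bound.
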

\begin{proof}
We start by showing that $\inf_n\rho_n>0$.   By the hypothesis $B_\omega<+\infty$ and the fact that $\sum_{n\geq0}e^{-\Delta_n}\leq 2 g_{d-1}$ (see \eqref{sum 1/p_n}), we get
\begin{align*}
\inf_n\rho_n &\geq \rho_{0}-4\log(2^{10}|\omega|)\sum_{k=1}^\infty e^{-\Delta_{k-1}}-6d\sum_{k=1}^\infty e^{-\Delta_{k-1}}\tau_k\\
& \geq \rho_0-8\log(2^{10}|\omega|)g_{d-1}-6d B_\omega>0.
\end{align*}

Now we prove that $X_n \in  \Us_n,\quad\forall\,n\geq1$. We need an estimating for $\phi_n$: it follows from \eqref{eq:phin} and $\nu_n=\sigma_n\leq 1$ that
$$
\phi_n = \log^+\left(8\frac{\nu_n+1}{\nu_n}\frac{\varepsilon_{n-1}}{\varepsilon_n}\right)\leq \log^+\left(2^4\frac{\varepsilon_{n-1}}{\sigma_n\varepsilon_n}\right).
$$ 
Moreover,
$$
\varepsilon_n = \frac{\sigma_{n}}{8}\min\left\{\frac{\sigma_{n}}{14\sigma_{n} + 48|\omega|},\frac{\nu_{n}}2\right\} =\frac{\sigma_{n}^2}{2^47\sigma_{n} + 2^73|\omega|}\geq \frac{\sigma_n^2}{2^{10}|\omega|}.
$$
On the other hand, $$\varepsilon_{n-1}\leq \frac{\sigma_{n-1}\nu_{n-1}}{16}\leq \frac{1}{2^4}.$$ 
Putting together these estimates we get,
$$
\phi_n\leq \log^+\left(\frac{2^{10}|\omega|}{\sigma_n^3}\right) = \log (2^{10}|\omega|) + 3d\tau_n,
$$
because $\sigma_n=e^{-d\tau_n}$.   This gives the desired estimate for $\phi_n$.  We also need an estimate for $\ell_n$:  by Lemma~\ref{lem:estimate ell},  we have 
$$
\ell_{n}\leq 2e^{-\Delta_n}.
$$
Using these estimates for $\phi_n$ and $\ell_n$ we get
\begin{align*}
2\nu_n+\ell_{n-1}\phi_n &\leq 2e^{-d\tau_n} + 2e^{-\Delta_{n-1}}(\log (2^{10}|\omega|) + 3d\tau_n)\\
&\leq 4\log (2^{10}|\omega|)e^{-\Delta_{n-1}} + 6d e^{-\Delta_{n-1}}\tau_n,
\end{align*}
since $e^{-d\tau_n}\leq e^{-d\tau_{n-1}}\leq e^{-\Delta_{n-1}}$, where we have used the fact that $\Delta(t)\leq dt$ for every $t\geq0$ (Remark~\ref{rem:Minkowski}).

At this point, we are prepared to verify the assumptions of Proposition~\ref{prop:ren}.  Indeed, consider the sequence $(\rho_n)_{n\geq0}$ defined in the statement of this theorem. Then
\begin{align*}
\rho_{n-1} -\rho_n &= 4\log (2^{10}|\omega|)e^{-\Delta_{n-1}} + 6d e^{-\Delta_{n-1}}\tau_n\\
&\geq 2\nu_n+\ell_{n-1}\phi_n.
\end{align*}
This shows that condition (2) of Proposition~\ref{prop:ren} holds:  $\rho_{n-1}\geq \rho_n +2\nu_{n}+\ell_{n-1}\phi_n$ for all $n\geq1$.  Regarding condition (1),  for any $n\geq1$ we have
\begin{align*}
\rho_0& > 8\log(2^{10}|\omega|)g_{d-1}+ 6d B_\omega\\
&\geq 4\log(2^{10}|\omega|)\sum_{k=1}^\infty e^{-\Delta_{k-1}}+6d\sum_{k=1}^\infty e^{-\Delta_{k-1}}\tau_k\\
&\geq \sum_{k=1}^n \left(4\log(2^{10}|\omega|)e^{-\Delta_{k-1}}+6de^{-\Delta_{k-1}}\tau_k\right)\\
&\geq \sum_{k=1}^n2\nu_k+\ell_{k-1}\phi_k.
\end{align*}
Thus,  by Proposition~\ref{prop:ren}, we conclude that $X_n=\UU_n(X_{n-1})\in \Us_n$ for every $n\geq1$.
\end{proof}

\subsection{Proof of Theorem~\ref{thm: main}}
Let $\vecomega=(\vecalf,1)\in\Rr^d$ be rationally independent such that $B_\omega<+\infty$. Given $\rho_0>0$, suppose that $X_0\in\Vs_0$ with $\omega\in \rot(X_0)$, i.e., $X_0\in\Us_0$. Choosing the sequences $(\sigma_n)_n$, $(\nu_n)_n$ and $(\rho_n)_n$ as in the previous section, if $\rho_0 > 8\log(2^{10}|\omega|)g_{d-1}+ 6d B_\omega$, then by Theorem~\ref{thm:infrenormalization}, 
$$
|X_n-\omega|'_{\rho_n+\nu_n}\leq\varepsilon_n,
\qquad
\forall\,n\geq0,
$$  
where
\begin{equation}\label{eq:epsilon n}
\varepsilon_n=\frac{\sigma_{n}^2}{2^47\sigma_{n} + 2^73|\omega|}\leq \frac{\sigma_{n}^2}{384|\omega|}.
\end{equation}

Notice that
\begin{equation}
X_n=H_n^*(X_0),
\end{equation}
where $H_n := G_1\circ\dots\circ G_n$ and $G_n := \id + u_{n-1}$ with $u_{n-1}\in \A'_{\rho_{n-1}}$ coming from Theorem~\ref{thm unif}.  

\begin{lemma}\label{lem: hypothesis for conjugacy}
$$
\sup_{n\geq1} \frac{|G_n-\id|_{\rho_n}}{\rho_{n-1}-\rho_{n}}\leq \frac12 \quad\text{and}\quad \sum_{n=1}^\infty \frac{|G_n-\id|_{\rho_n}}{\rho_{n-1}-\rho_{n}}\leq (d-1)^2g_{d-1}.
$$
\end{lemma}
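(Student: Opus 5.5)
We bound the numerator and the denominator separately. For the numerator, $G_n-\id=u_{n-1}$, and Theorem~\ref{thm unif} (applied at step $n$ with $\sigma=\sigma_{n-1}$, $\nu=\nu_{n-1}$, $\rho=\rho_{n-1}$) gives $|u_{n-1}|'_{\rho_{n-1}}\le \frac{8}{\sigma_{n-1}}|X_{n-1}-\omega|'_{\rho_{n-1}+\nu_{n-1}}$. Since $X_{n-1}\in\Us_{n-1}$ by Theorem~\ref{thm:infrenormalization}, the right-hand side is at most $\frac{8\varepsilon_{n-1}}{\sigma_{n-1}}$. Because $\rho_n\le\rho_{n-1}$ and $|\cdot|_\rho\le|\cdot|'_\rho$, we get
\[
|G_n-\id|_{\rho_n}\le \frac{8\varepsilon_{n-1}}{\sigma_{n-1}}\le \frac{8\sigma_{n-1}}{384|\omega|}=\frac{\sigma_{n-1}}{48|\omega|},
\]
using \eqref{eq:epsilon n}. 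This holds also for $n=1$, where $\sigma_0=1$.

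For the denominator, the definition of $\rho_n$ in Theorem~\ref{thm:infrenormalization} gives
\[
\rho_{n-1}-\rho_n=4\log(2^{10}|\omega|)e^{-\Delta_{n-1}}+6de^{-\Delta_{n-1}}\tau_n\ge 4\log(2^{10}|\omega|)\,e^{-\Delta_{n-1}}.
\]
Since $\omega=(\alpha,1)$, we have $|\omega|\ge1$, so $\log(2^{10}|\omega|)\ge 10\log 2>6$. Hence the denominator is at least $24e^{-\Delta_{n-1}}$.

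The key comparison is between $\sigma_{n-1}=e^{-d\tau_{n-1}}$ and $e^{-\Delta_{n-1}}$. By Remark~\ref{rem:Minkowski}, $\Delta(t)\le dt$, so $e^{-d\tau_{n-1}}\le e^{-\Delta_{n-1}}$. Therefore
\[
\frac{|G_n-\id|_{\rho_n}}{\rho_{n-1}-\rho_n}\le \frac{e^{-\Delta_{n-1}}/(48|\omega|)}{4\log(2^{10}|\omega|)e^{-\Delta_{n-1}}}\le \frac{1}{192\cdot 6}<\frac12,
\]
which gives the first claim.

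For the summed bound, the same comparison alone gives only a uniform bound on each term, so summability has to come from the numerator. We bound it by $\sigma_{n-1}/(48|\omega|)=e^{-d\tau_{n-1}}/(48|\omega|)$. Using $d\tau_{n-1}\ge\Delta_{n-1}=\log|\hat p_{n-1}|$, this is at most $1/(48|\hat p_{n-1}|)$. Keeping the denominator bound $\rho_{n-1}-\rho_n\ge 24e^{-\Delta_{n-1}}=24/|\hat p_{n-1}|$, each term is bounded by $\frac{|\hat p_{n-1}|}{48\cdot 24\,|\hat p_{n-1}|}\,e^{\Delta_{n-1}-d\tau_{n-1}}$, that is, by a constant times $e^{\Delta_{n-1}-d\tau_{n-1}}$.

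The main obstacle is showing that $\sum_n e^{\Delta_{n-1}-d\tau_{n-1}}$ is bounded by a constant depending only on $d$. By Lemma~\ref{lem:W}, $\Delta_{n-1}-d\tau_{n-1}=(1-d)\tau_{n-1}+W(\tau_{n-1})$, and $W\le\log(d-1)$ by Remark~\ref{rem:Minkowski}. So each term is at most $(d-1)e^{-(d-1)\tau_{n-1}}$, and it remains to bound $\sum_n e^{-(d-1)\tau_{n}}$. From the formula for $\tau_n$, $e^{d\tau_n}=|\hat p_n|/|p_{n-1}\cdot\omega|\ge|\hat p_n|$, since $|p_{n-1}\cdot\omega|\le1/2$ for $n\ge2$ (the $n=0,1$ terms are bounded directly). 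Hence $e^{-(d-1)\tau_n}\le|\hat p_n|^{-(d-1)/d}$.

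The exponential growth $|\hat p_{n+g_{d-1}}|\ge2|\hat p_n|$ from Lemma~\ref{lem:egr} then makes $\sum_n|\hat p_n|^{-(d-1)/d}$ a geometric-type sum, bounded by $g_{d-1}/(1-2^{-(d-1)/d})$. Finally, we combine this with the prefactor $(d-1)$ and the slack in the explicit constants ($48\cdot 24$ in the denominator) to reach the stated bound $(d-1)^2g_{d-1}$, without optimizing constants.
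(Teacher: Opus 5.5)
Your proof is correct and follows the paper's argument. You bound $|G_n-\id|_{\rho_n}$ by a multiple of $\sigma_{n-1}=e^{-d\tau_{n-1}}$ via Theorem~\ref{thm unif}, bound $\rho_{n-1}-\rho_n$ from below by a multiple of $e^{-\Delta_{n-1}}$, and use $\Delta(t)\le dt$ for the supremum. For the sum, the paper is slightly more economical: it writes $e^{\Delta_{n-1}-d\tau_{n-1}}\le e^{\Delta_{n-1}-2\tau_{n-1}}\le (d-1)^2e^{-\Delta_{n-1}}$ (from $\Delta(t)\le t+\log(d-1)$) and then invokes the already established $\sum_n e^{-\Delta_n}\le 2g_{d-1}$. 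Your detour through $e^{-(d-1)\tau_n}\le|\hat p_n|^{-(d-1)/d}$ re-derives a geometric sum, and when $d=2$ it needs the extra slack in your sharper constants ($\sigma_{n-1}/(48|\omega|)$ and $24e^{-\Delta_{n-1}}$) to land below $(d-1)^2g_{d-1}$.
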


\begin{proof}
Given any $n\geq1$ we have that
$$
\rho_{n-1} -\rho_n = 4\log (2^{10}|\omega|)e^{-\Delta_{n-1}} + 6d e^{-\Delta_{n-1}}\tau_n\geq e^{-\Delta_{n-1}}.
$$
By Theorem~\ref{thm unif}, 
$$
|u_n|'_{\rho_n}  \leq \min\left\{\frac{\sigma_{n}}{14\sigma_{n} + 48|\omega|},\frac{\nu_{n}}2\right\} \leq \frac{\nu_{n}}{2}= \frac{\sigma_n}{2}
$$
and
$$
|G_n-\id|_{\rho_{n-1}}=|u_{n-1}|_{\rho_{n-1}} \leq |u_{n-1}|'_{\rho_{n-1}}.
$$
Thus, $|G_n-\id|_{\rho_{n}}\leq |G_n-\id|_{\rho_{n-1}}\leq \frac{\sigma_{n-1}}{2}= \frac12 e^{-d\tau_{n-1}}$. Putting these estimates together we get
$$
\frac{|G_n-\id|_{\rho_n}}{\rho_{n-1}-\rho_{n}}\leq \frac12 e^{\Delta_{n-1}-d\tau_{n-1}}\leq \frac12,
$$
since $\Delta_{n-1}=\Delta(\tau_{n-1})\leq d \tau_{n-1}$ according to Remark~\ref{rem:Minkowski}. This shows the first inequality. Now,  we proceed to prove the second inequality. Using again Remark~\ref{rem:Minkowski} and  \eqref{sum 1/p_n} we obtain
\begin{align*}
\sum_{n=1}^\infty \frac{|G_n-\id|_{\rho_n}}{\rho_{n-1}-\rho_{n}}
&\leq \frac12 \sum_{n=1}^\infty e^{\Delta_{n-1}-d\tau_{n-1}}\\
&\leq\frac12 \sum_{n=1}^\infty e^{\Delta_{n-1}-2\tau_{n-1}}\\
&\leq\frac12 \sum_{n=1}^\infty e^{\Delta_{n-1}-2(\Delta_{n-1}-\log(d-1))}\\
&= \frac12(d-1)^2\sum_{n=1}^\infty e^{-\Delta_{n-1}}\\
&\leq (d-1)^2g_{d-1}.
\end{align*}
\end{proof}

\begin{lemma}
$(H_{n}-\id)_{n\geq1}$ is a Cauchy sequence  in $\A_{\rho_{\infty}}$ with $H:=\lim_nH_n$ being  a diffeomorphism such that 
$$
|H-\id|_{\rho_\infty}\leq 22 g_{d-1}|X_{0}-\omega|'_{\rho_{0}+\nu_{0}}.
$$
where $\rho_\infty := \rho_0 - (8\log(2^{10}|\omega|)g_{d-1}+ 6d B_{\omega})$. 
\end{lemma}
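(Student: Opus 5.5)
We plan to control each increment $H_n-H_{n-1}$ in $\A_{\rho_\infty}$ by a composition estimate, and then sum. Write $H_n=H_{n-1}\circ G_n$, so that
\[
H_n-H_{n-1}=(H_{n-1}-\id)\circ G_n-(H_{n-1}-\id)+u_{n-1}.
\]
We will invoke the standard composition estimate for the spaces $\A_\rho$ from Appendix~\ref{banach spaces}. It says that if $|g-\id|_{\rho'}\le \rho-\rho'$, then $|f\circ g|_{\rho'}\le |f|_\rho$. A Cauchy-type companion bounds $|f\circ g-f|_{\rho'}$ by a constant times $\frac{|g-\id|_{\rho'}}{\rho-\rho'}|f|_\rho$.

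The cleanest route is to prove by induction a uniform bound for $|H_n-\id|_{\rho_n}$ in the shrinking strips $\rho_n$. The hypotheses of Lemma~\ref{lem: hypothesis for conjugacy} are tailored to this. We have $|G_n-\id|_{\rho_n}\le \frac12(\rho_{n-1}-\rho_n)$, so composition with $G_n$ maps the $\rho_n$-strip into the $\rho_{n-1}$-strip with room to spare. Hence
\[
|H_n-\id|_{\rho_n}\le |H_{n-1}-\id|_{\rho_{n-1}}+|u_{n-1}|_{\rho_n},
\]
and summing gives $|H_n-\id|_{\rho_n}\le\sum_{k\ge0}|u_k|'_{\rho_k}$.

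By Theorem~\ref{thm unif}(2) and Lemma~\ref{lem:ren}, we have $|u_k|'_{\rho_k}\le \frac{8}{\sigma_k}|X_k-\omega|'_{\rho_k+\nu_k}$ and $|X_k-\omega|'_{\rho_k+\nu_k}\le \frac{\varepsilon_k}{\varepsilon_0}|X_0-\omega|'_{\rho_0+\nu_0}$. Combined with \eqref{eq:epsilon n} and $\varepsilon_0\ge (2^4 7+2^7 3|\omega|)^{-1}$, this bounds $|u_k|'_{\rho_k}$ by a constant times $\sigma_k|X_0-\omega|'_{\rho_0+\nu_0}$. Finally $\sigma_k=e^{-d\tau_k}\le e^{-\Delta_k}=1/|\hat p_k|$, and \eqref{sum 1/p_n} gives $\sum_k\sigma_k\le 2g_{d-1}$, which yields a bound of the form $C g_{d-1}|X_0-\omega|'_{\rho_0+\nu_0}$. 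The claimed constant $22$ should come from tracking the numbers: $8\cdot\varepsilon_k/(\sigma_k\varepsilon_0)$ with $|\omega|\ge1$ is roughly $8\cdot\frac{\sigma_k}{384|\omega|}\cdot(112+384|\omega|)\le 8\cdot\frac{496}{384}\sigma_k$, which gives about $10.3\cdot 2g_{d-1}\approx 21g_{d-1}$. We expect this bookkeeping to work out, possibly up to an extra factor from the composition step.

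For the Cauchy property in $\A_{\rho_\infty}$, we bound $|H_n-H_{n-1}|_{\rho_\infty}$ using the Cauchy-type composition estimate with strips $\rho_\infty<\rho_n<\rho_{n-1}$. The term $(H_{n-1}-\id)\circ G_n-(H_{n-1}-\id)$ is at most $\frac{|G_n-\id|_{\rho_n}}{\rho_{n-1}-\rho_n}$ times the uniform bound on $|H_{n-1}-\id|_{\rho_{n-1}}$. By the second inequality of Lemma~\ref{lem: hypothesis for conjugacy}, these ratios are summable, and the $u_{n-1}$ terms are summable as shown above. Therefore $\sum_n|H_n-H_{n-1}|_{\rho_\infty}<\infty$, and $H-\id$ exists in $\A_{\rho_\infty}$ with the norm bound inherited from the uniform estimate, since $\rho_\infty\le\rho_n$.

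It remains to show that $H$ is a diffeomorphism. Each $G_n$ is a near-identity diffeomorphism, and the inverses $H_n^{-1}$ can be handled symmetrically, as $G_n^{-1}=\id+v_n$ with $|v_n|$ comparable to $|u_{n-1}|$. Their limit then inverts $H$. Alternatively, the derivative bound $|DH-I|<1$ obtained from the $|\cdot|'$ estimates gives local invertibility, and degree one gives global invertibility.

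We expect the main obstacle to be the composition lemma with the precise loss-of-strip hypothesis. In particular, we must make sure the appendix estimate applies in the $\ell_1$ Fourier norm with the factor $\frac12$ of Lemma~\ref{lem: hypothesis for conjugacy}, and that the constants do not exceed $22$.
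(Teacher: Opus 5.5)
Your treatment of the Cauchy property and of the norm bound follows the paper. Your one-step inequality is Lemma~\ref{lemma Hn-id vf v1}, and your increment bound via items (6) and (1) of Proposition~\ref{prop:norms} is Lemma~\ref{lemma Hn-Hn-1 v1}; the two are assembled as in Proposition~\ref{prop: conv of Hn}. The bookkeeping closes with no extra factor from composition: $|H-\id|_{\rho_\infty}\le\sum_{k\ge0}|u_k|'_{\rho_k}\le\bigl(8+\tfrac{7}{3|\omega|}\bigr)\,2g_{d-1}|X_0-\omega|'_{\rho_0+\nu_0}\le\tfrac{62}{3}g_{d-1}|X_0-\omega|'_{\rho_0+\nu_0}$.

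The gap is the claim that $H$ is a diffeomorphism, and this is where the paper uses one more idea.

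\emph{Your second route} asserts $|DH-I|<1$, but your estimates do not give it. The bound you prove on $H-\id$ is on the strip of width $\rho_\infty$, which is only known to be positive. So Cauchy's inequality gives $\|D(H-\id)\|_{C^0}\le|H-\id|_{\rho_\infty}/(\rho_\infty-\rho)$, which need not be $<1$. Multiplying the factors $I+Du_k$ only gives $\|DH-I\|_{C^0}\le\exp\bigl(\sum_k\|Du_k\|_{C^0}\bigr)-1$, with $\sum_k\|Du_k\|_{C^0}\le 22g_{d-1}\varepsilon_0$. That is not below $\log 2$ when $d\ge3$ and $|\omega|$ is close to $1$.

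\emph{Your first route} is only a sketch, and the situation is not symmetric. In $H_n^{-1}=G_n^{-1}\circ H_{n-1}^{-1}$ the new map sits on the outside. An analytic-norm estimate via Proposition~\ref{prop:norms}(3) would therefore need the accumulated displacement of $H_{n-1}^{-1}$ to fit in the remaining width. Lemma~\ref{lem: hypothesis for conjugacy}, however, only compares each $|G_n-\id|_{\rho_n}$ with its own loss $\rho_{n-1}-\rho_n$. The route can be completed in $C^1$: get uniform convergence of $H_n^{-1}$ and $D(H_n^{-1})$ from $\sum_k\|Du_k\|_{C^0}<\infty$ and $\|Du_k\|_{C^0}\le\frac1{14}$, then pass to the limit in $H_n\circ H_n^{-1}=H_n^{-1}\circ H_n=\id$. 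That argument is missing from your proposal.

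\emph{The paper's fix} uses smallness of the tail instead. Choose $N$ with $\sum_{n>N}|G_n-\id|_{\rho_n}<\rho_\infty/2$, and write $H=G_1\circ\cdots\circ G_N\circ\tilde H$, where $\tilde H$ is the limit of the tail compositions. The same telescoping bound gives $|\tilde H-\id|_{\rho_\infty}<\rho_\infty/2$. Hence $\|D(\tilde H-\id)\|_{C^0}\le\frac{2}{\rho_\infty}|\tilde H-\id|_{\rho_\infty}<1$ by Proposition~\ref{prop:norms}(1). Lemma~\ref{lemma:diffeo} then makes $\tilde H$ an analytic diffeomorphism, and likewise each $G_n$, since $\|Du_{n-1}\|_{C^0}\le|u_{n-1}|'_{\rho_{n-1}}\le\frac1{14}$.
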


\begin{proof}
That $(H_{n}-\id)_{n\geq1}$ is a Cauchy sequence in $\A_{\rho_{\infty}}$ and the limit $H$ is a diffeomorphism follows from Lemma~\ref{lem: hypothesis for conjugacy} and Proposition~\ref{prop: conv of Hn}. To show the estimate in the norm $|H-\id|_{\rho_\infty}$, again by Proposition~\ref{prop: conv of Hn}, we have
$$
|H-\id|_{\rho_\infty}\leq \sum_{n=1}^\infty |G_n-\id|_{\rho_n}.
$$
As in the proof of Lemma~\ref{lem: hypothesis for conjugacy}, we have $|G_n-\id|_{\rho_n}\leq |u_{n-1}|'_{\rho_{n-1}}$. By Theorem~\ref{thm unif}, $|u_{n-1}|'_{\rho_{n-1}}\leq \frac{8}{\sigma_{n-1}}|X_{n-1}-\omega|'_{\rho_{n-1}+\nu_{n-1}}$. Applying $n-1$ times Lemma~\ref{prop:ren}, we get
\begin{align*}
|X_{n-1}-\omega|'_{\rho_{n-1}+\nu_{n-1}}&\leq \frac{\varepsilon_{n-1}}{\varepsilon_{n-2}}\frac{\varepsilon_{n-2}}{\varepsilon_{n-3}}\cdots\frac{\varepsilon_1}{\varepsilon_0}|X_{0}-\omega|'_{\rho_{0}+\nu_{0}}\\
&=\frac{\varepsilon_{n-1}}{\varepsilon_{0}}|X_{0}-\omega|'_{\rho_{0}+\nu_{0}}.
\end{align*}
This gives
$$
|H-\id|_{\rho_\infty}\leq \frac{8|X_{0}-\omega|'_{\rho_{0}+\nu_{0}}}{\varepsilon_0}\sum_{n=1}^\infty  \frac{\varepsilon_{n-1}}{\sigma_{n-1}}.
$$
Using
successively \eqref{eq:epsilon n}, the fact that $\sigma_{n-1}=e^{-d\tau_{n-1}}$ and that $\Delta(t)\leq dt$ by
Lemma~\ref{lem:W}, we get
$$
\sum_{n=1}^\infty \frac{\varepsilon_{n-1}}{\sigma_{n-1}} \leq \sum_{n=1}^\infty \frac{\sigma_{n-1}^2}{384|\omega|\sigma_{n-1}} = \sum_{n=1}^\infty \frac{e^{-d\tau_{n-1}}}{384|\omega|}\leq\frac{1}{384|\omega|} \sum_{n=1}^\infty e^{-\Delta_{n-1}}.
$$
Thus,
\begin{align*}
|H-\id|_{\rho_\infty}&\leq \frac{8|X_{0}-\omega|'_{\rho_{0}+\nu_{0}}}{384\varepsilon_0|\omega|} \sum_{n=1}^\infty e^{-\Delta_{n-1}}\\
&= \frac{7+24|\omega|}{3|\omega|} \sum_{n=1}^\infty e^{-\Delta_{n-1}}|X_{0}-\omega|'_{\rho_{0}+\nu_{0}}\\
&\leq 22 g_{d-1}|X_{0}-\omega|'_{\rho_{0}+\nu_{0}},
\end{align*}
because $\varepsilon_0= \frac{1}{112+384|\omega|}$, $|\omega|\geq1$ and $\sum_{n=1}^\infty e^{-\Delta_{n-1}}\leq 2g_{d-1}$.
\end{proof}

\begin{lemma}
$H^*(X_0)=\omega$.
\end{lemma}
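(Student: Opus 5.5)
The plan is to pass to the limit in the identity $X_n=H_n^*(X_0)$, using that $X_n\to\omega$ and $H_n\to H$ in suitable norms.

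\textbf{Step 1: $X_n\to\omega$.} By Theorem~\ref{thm:infrenormalization} we have $|X_n-\omega|'_{\rho_n+\nu_n}\le\varepsilon_n$. By \eqref{eq:epsilon n}, $\varepsilon_n\le\sigma_n^2/(384|\omega|)\to0$. Since $\rho_n\ge\rho_\infty$, this gives $|X_n-\omega|_{\rho_\infty}\to0$, and in particular $X_n\to\omega$ uniformly on $\Tt^d$.

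\textbf{Step 2: convergence of $H_n$ and $DH_n$.} From the previous lemma and Proposition~\ref{prop: conv of Hn}, $H_n-\id\to H-\id$ in $\A_{\rho_\infty}$. To control derivatives, I would use that $\rho_\infty>0$ strictly. Fix $r<\rho_\infty$. A Cauchy estimate (Appendix~\ref{banach spaces}) bounds the $C^1$ norm of $g$ by a constant (depending on $\rho_\infty-r$) times $|g|_{\rho_\infty}$; concretely, $|k|e^{r|k|}\le C e^{\rho_\infty|k|}$. Hence $H_n\to H$ in $C^1$. Since $H$ is a diffeomorphism (previous lemma), $DH(x)$ is invertible for every $x$. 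By uniform convergence and compactness, $DH_n(x)^{-1}\to DH(x)^{-1}$ uniformly for $n$ large.

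\textbf{Step 3: pass to the limit in the conjugacy identity.} Write the relation as $DH_n\cdot X_n = X_0\circ H_n$. This is equivalent to $X_n=(DH_n)^{-1}X_0\circ H_n$, which is the definition of the pullback. Letting $n\to\infty$, the left side converges uniformly to $DH\cdot\omega$ by Steps 1 and 2. The right side converges uniformly to $X_0\circ H$, because $X_0$ is uniformly continuous on $\Tt^d$ and $H_n\to H$ uniformly. Therefore $DH\,\omega=X_0\circ H$, i.e.\ $H^*X_0=\omega$.

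\textbf{Main obstacle.} The delicate point is Step 2. The estimates only give convergence of $H_n$ in the norm $|\cdot|_{\rho_\infty}$, not in a primed norm, and the radii $\rho_n$ shrink. I would first try to invoke directly whatever Proposition~\ref{prop: conv of Hn} states about $C^1$ convergence. If it does not state this, I would fall back on the elementary Cauchy estimate above, which is available because $\rho_\infty>0$ is a fixed positive radius independent of $n$.
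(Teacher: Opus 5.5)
Your proof is correct and follows essentially the same route as the paper, which also passes to the limit in $X_n=H_n^*(X_0)$ using $\varepsilon_n\to0$ and the $C^1$ convergence $H_n\to H$ (via Lemma~\ref{lem:continuity of push-forward}). The differences are minor. You clear the inverse by writing $DH_n\,X_n=X_0\circ H_n$ and use invertibility of $DH$ only at the end, so your remark about $DH_n^{-1}\to DH^{-1}$ is not even needed. You also make explicit the Cauchy-estimate argument that turns convergence in $\A_{\rho_\infty}$, with $\rho_\infty>0$, into $C^1$ convergence, which the paper uses only implicitly.
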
 
\begin{proof}
Using triangle inequality,
\begin{align*}
\|H^*(X_0)-\omega\|_{C^0}&=\|H^*(X_0)-H_n^*(X_0)+H_n^*(X_0)-\omega\|_{C^0}\\
&\leq \|H^*(X_0)-H_n^*(X_0)\|_{C^0}+\|H_n^*(X_0)-\omega\|_{C^0}\\
&\leq \|H^*(X_0)-H_n^*(X_0)\|_{C^0}+|H_n^*(X_0)-\omega|_{\rho_n}\\
&\leq \|H^*(X_0)-H_n^*(X_0)\|_{C^0}+\varepsilon_n.
\end{align*}
By Lemma~\ref{lem:continuity of push-forward}, the first term goes to zero as $n\to\infty$.  The second term also goes to zero as $n\to\infty$ (see  \eqref{eq:epsilon n}). This shows that $H^*(X_0)=\omega.$
\end{proof}


\appendix

\section{Properties of Banach spaces}\label{banach spaces}

%
%

To simplify the notation we write $\A_{\rho}$ and $\A_{\rho}'$ in place of $\A_{\rho}(\Tt^d,\Rr^d)$ and $\A_{\rho}'(\Tt^d,\Rr^d)$, respectively.	
In what follows a function $f\in\mathcal A_{\rho}$ is seen both as a function defined on the torus and as a $2\pi$-periodic function in each variable defined on $\Rr^d$. 	
Given $f\in \A_{\rho}'$ and $h\in \A_{\rho}$, the derivative $Df$ exists and  $Df\,h$ denotes the function $x\in\Tt^d\mapsto Df(x)\,h(x)$. Moreover, we have
$$
|Dfh|_{\rho}\leq |f|'_{\rho}|h|_{\rho},
$$ 
meaning that $Df$ is a bounded operator on $\A_{\rho}$ whenever $f\in\A_{\rho}'$. Denote by $|Df|_{\rho}$ its induced norm. The following properties are well-known; however, for the reader's convenience, we include their proofs here.

\begin{proposition}\label{prop:norms}
Given $\rho>0$ and $\nu>0$, let $f\in \A_{\rho+\nu}$ and $u\in \A_{\rho}$. The following hold:
\begin{enumerate}
\item \label{prop A1 a}
$|Df|_\rho=\max_{|\alpha|=1}|\partial^\alpha f|_\rho\leq \frac1\nu|f|_{\rho+\nu}$.
\item $|f|'_{\rho}=|f|_\rho+\sum_{|\alpha|=1}|\partial^\alpha f|_\rho\leq \left(1+\frac{1}{\nu}\right)|f|_{\rho+\nu}$.
\item If $|u|_\rho\leq \nu$, then 
$$
|f\circ(\id+u)|_\rho\leq |f|_{\rho+\nu}.
$$
\item
If $f_t\in\A_\rho$ for all $t\in[0,1]$, where $t\mapsto f_t(x)$ is integrable for all $x\in\Tt^d$, then
$$
\left| \int_0^1 f_t\,dt \right|_\rho \leq 
\int_0^1 \left| f_t \right|_\rho \,dt.
$$
\item If $|u|_\rho\leq \nu$ and $f\in  \A_{\rho+\nu}'$, then 
$$
|Df\circ(\id+u)|_\rho\leq |Df|_{\rho+\nu}.
$$
\item If $|u|_\rho\leq \nu$, $|v|_\rho\leq \nu$ and $f\in  \A_{\rho+\nu}'$,
$$
|f\circ(\id+u)-f\circ(\id+v)|_\rho\leq |Df|_{\rho+\nu}|u-v|_\rho.
$$
\item If $|u|_\rho\leq \frac\nu2$, $|v|_\rho\leq \frac\nu2$ and $f\in  \A_{\rho+\nu}'$, then 
$$
|Df\circ(\id+u)-Df\circ(\id+v)|_\rho\leq\frac{2}{\nu} |f|'_{\rho+\nu}|u-v|_\rho.
$$
\end{enumerate}
\end{proposition}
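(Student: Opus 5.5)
The plan is to work directly with Fourier coefficients, using throughout that the weight $e^{\rho|k|}$ is submultiplicative. Writing $f=\sum_k f_k e^{ik\cdot x}$, we have $\partial^\alpha f=\sum_k (ik)^\alpha f_k e^{ik\cdot x}$ for $|\alpha|=1$. Hence $|\partial_{x_j} f|_\rho=\sum_k |k_j||f_k|e^{\rho|k|}$. Summing over $j$ gives $\sum_{|\alpha|=1}|\partial^\alpha f|_\rho=\sum_k|k||f_k|e^{\rho|k|}$, which yields the identity in (2). The bound $|k|e^{-\nu|k|}\le 1/\nu$, which follows from $xe^{-\nu x}\le 1/(e\nu)$, then gives both (1) and (2). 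For the operator norm in (1), I would use $|Df\,h|_\rho=|\sum_j \partial_j f\, h_j|_\rho\le \max_j|\partial_j f|_\rho\sum_j|h_j|_\rho$ by the algebra property, with the $\ell^1$-sum norm on vectors. Equality is attained by taking $h$ a constant vector in the coordinate direction that realizes the maximum. Item (4) is immediate: Fourier coefficients commute with the integral, and $|\int c_k(t)dt|\le\int|c_k(t)|dt$, followed by Tonelli.

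For (3), I would expand $e^{ik\cdot(x+u(x))}=e^{ik\cdot x}\sum_{m\ge0}\frac{(ik\cdot u)^m}{m!}$. The algebra property then gives $|e^{ik\cdot x}e^{ik\cdot u}|_\rho\le e^{\rho|k|}e^{|k\cdot u|_\rho}$. Since $|k\cdot u|_\rho\le\max_j|k_j|\cdot$ hmm — more carefully, $|k\cdot u|_\rho\le \sum_j|k_j||u_j|_\rho\le |k|\max_j|u_j|_\rho\le|k||u|_\rho\le|k|\nu$. Summing $|f_k|e^{(\rho+\nu)|k|}$ then gives (3). Convergence of the series in $\A_\rho$ is justified by absolute convergence, and this also shows that $f\circ(\id+u)$ is well defined there.

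Item (5) is (3) applied to each $\partial_j f$, combined with the operator-norm identity from (1). For (6), I would write $f\circ(\id+u)-f\circ(\id+v)=\int_0^1 Df\circ(\id+v+t(u-v))\,(u-v)\,dt$. The point $w_t=v+t(u-v)$ satisfies $|w_t|_\rho\le\nu$ by convexity. Then (4), the bound $|Df\circ(\id+w_t)(u-v)|_\rho\le|Df\circ(\id+w_t)|_\rho|u-v|_\rho$, and (5) finish the estimate.

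For (7), I would apply the same integral formula to each $\partial_j f$ in place of $f$, but only on the smaller shift $\nu/2$. Indeed, $|w_t|_\rho\le\nu/2$, so (6) at width $\nu/2$ gives $|\partial_jf\circ(\id+u)-\partial_jf\circ(\id+v)|_\rho\le|D\partial_jf|_{\rho+\nu/2}|u-v|_\rho$. By (1), $|D\partial_j f|_{\rho+\nu/2}\le\frac{2}{\nu}|\partial_jf|_{\rho+\nu}\le\frac2\nu|f|'_{\rho+\nu}$.

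The main obstacle is bookkeeping in (7). The left side is an operator norm, namely a max over columns $j$ or a sum, depending on convention. We must check that the constant stays $\frac2\nu$ and does not pick up a factor $d$. The $\max_j$ form of $|Df|_\rho$ is what prevents this, so I would fix that convention from (1) and verify the column-wise bound carefully.
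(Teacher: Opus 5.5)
Your proposal is correct and follows the paper's proof essentially step for step. You use the Fourier-coefficient computation with $|k|e^{-\nu|k|}\le 1/\nu$ for (1)--(2), the expansion $|e^{ik\cdot u}|_\rho\le e^{|k||u|_\rho}$ for (3), and Fourier coefficients commuting with the integral for (4); (5)--(7) then come from (3), the integral mean-value formula with convexity, and (1) applied at width $\nu/2$ to each $\partial_j f$, exactly as in the paper.

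Your concern about a factor $d$ in (7) is resolved the way you indicate. The operator norm of multiplication by a matrix-valued function is bounded by the maximum of the $|\cdot|_\rho$-norms of its columns, which is the convention fixed in (1), so the constant stays $\frac{2}{\nu}$.
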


\begin{proof}
\hfill
\begin{enumerate}
\item Given $h\in\A_\rho$ with $|h|_\rho=1$, we have 
$$
|Df\,h|_\rho = |\sum_{j=1}^d\partial_{x_j}f  h_j|_\rho\leq \max_{|\alpha|=1}|\partial^\alpha f|_\rho \sum_j|h_j|_\rho = \max_{|\alpha|=1}|\partial^\alpha f|_\rho.
$$ 
Thus $|Df|_\rho\leq \max_{|\alpha|=1}|\partial^\alpha f|_\rho$. On the other hand, let $j\in\{1,\ldots, d\}$ such that $|\partial_{x_j}f|_\rho = \max_{|\alpha|=1}|\partial^\alpha f|_\rho$. Then, setting $h=e_j$ (constant vector equal to zero except in the $j$-th component where it is one), we have $|h|_\rho = 1$ and $|Df\,h|=\max_{|\alpha|=1}|\partial^\alpha f|_\rho$.  This shows the claimed equality. Now, regarding the inequality, from the identity between Fourier coefficients $(\partial^\alpha f)_k =i k^\alpha f_k$ where $k^\alpha=k_1^{\alpha_1}\cdots k_d^{\alpha_d}$, we get $|\partial^\alpha f|_\rho = \sum_k|k^\alpha||f_k|e^{\rho|k|}\leq \sum_k|k||f_k|e^{\rho|k|}\leq\frac{1}{\nu}\sum_k|f_k|e^{(\rho+\nu)|k|}$ since $|k|e^{-\nu|k|}\leq \frac{1}{\nu}$ for all $k\in\Zz^d$.
\item Similar to the previous item, taking into account that $\sum_{|\alpha|=1}|\partial^\alpha f|_\rho = \sum_k|k||f_k|e^{\rho|k|}$. 
\item Given $|u|_\rho\leq \nu$, using the inequality 
$$
|e^{ik\cdot u}|_{\rho}\leq \sum_{n\in\mathbb N}|\tfrac{(ik\cdot u)^n}{n!}|_{\rho}\leq\sum_{n\in\mathbb N}\tfrac{|k|^n|u|_{\rho}^n}{n!}=e^{|k||u|_{\rho}},
$$ 
we obtain
\begin{eqnarray*}
|f\circ(\id+u)|_\rho 
&=& 
\left|\sum_kf_ke^{ik\cdot(x+u(x))}\right|_\rho \\
&\leq & 
\sum_k|f_k||e^{ik\cdot x}|_\rho e^{|k||u|_\rho} \\
&=& 
\sum_k|f_k|e^{(\rho + |u|_\rho)|k|}.
\end{eqnarray*}
\item Since the Fourier coefficients satisfy $(\int f_tdt)_k = \int (f_t)_k dt $, we get
$$
\left|\int f_t dt\right|_\rho =\sum_k \left|\int (f_t)_k dt\,\right| e^{\rho|k|}\leq
\sum_k \int \left|(f_t)_k \right|dt\, e^{\rho|k|}
=\int |f_t|_\rho dt.
$$
\item  By (3), $|(\partial^\alpha f)\circ(\id+u)|_\rho \leq |\partial^\alpha f|_{\rho+\nu}$.  By (1), the claimed inequality follows. 
\item From the equality,
$$
 f\circ (\id+u)-f\circ(\id+v)=\int_0^1 Df\circ (\id + su+(1-s)v)(u-v)\,ds
$$
we get, using (4) and (5), that
\begin{align*}
| f\circ (\id+u)-f\circ(\id+v) |_\rho &\leq \int_0^1 |Df\circ (\id + su+(1-s)v)(u-v)|_\rho\,ds\\
&\leq \int_0^1 |Df\circ (\id + su+(1-s)v)|_\rho|u-v|_\rho\,ds\\
&\leq \int_0^1 |Df|_{\rho+\nu}|u-v|_\rho\,ds\\
&=|Df|_{\rho+\nu}|u-v|_\rho
\end{align*}
since $|su+(1-s)v|_\rho\leq\nu$ for every $s\in[0,1]$.
\item Using (6), for any $|\alpha|=1$, we have
$$
| \partial^\alpha f\circ (\id+u)-\partial^\alpha f\circ(\id+v) |_\rho \leq |D\partial^\alpha f|_{\rho+\nu/2}|u-v|_\rho.
$$
By (1), $|D\partial^\alpha f|_{\rho+\nu/2}\leq \frac{2}{\nu}|\partial^\alpha f|_{\rho+\nu}\leq |f|'_{\rho+\nu}$. From this and the definition of the operator norm we immediately deduce the desired inequality.
\end{enumerate}
\end{proof}

In the following, for any matrix-valued function $A\colon \Tt^d\to \GL_d(\Rr)$ we write $\|A\|_{C^0}=\sup_{x\in\Tt^d}|A(x)|$ where $|\cdot|$ denotes the operator norm in $\Rr^d$ with respect to the $\ell^1$-norm.   For $f\in C^1(\Tt^d,\Rr^d)$ we define the usual $C^1$-norm $\|f\|_{C^1}:=\|f\|_{C^0}+\|Df\|_{C^0}$.  

\begin{lemma}\label{lemma:diffeo}
Let $u\in C^{\omega}(\Tt^d,\Rr^d)$ with $\|Du\|_{C^0}<1$. 
Then $\phi=\id+u$ is an analytic diffeomorphism of $\Tt^d$.
\end{lemma}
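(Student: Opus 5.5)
We must show that $\phi=\id+u$ is an analytic diffeomorphism of $\Tt^d$ when $u$ is analytic and $\|Du\|_{C^0}<1$. The plan is to work with the lift $\Phi=\id+U$ on $\Rr^d$, where $U$ is $2\pi\Zz^d$-periodic and analytic. Since $\Phi(x+2\pi m)=\Phi(x)+2\pi m$ for all $m\in\Zz^d$, $\Phi$ descends to a map $\phi$ of $\Tt^d$. The proof then has three steps: show $\Phi$ is an analytic diffeomorphism of $\Rr^d$, deduce that $\phi$ is bijective, and conclude analyticity of the inverse.

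\emph{Step 1: $\Phi$ is injective.} Set $\theta:=\|Du\|_{C^0}<1$. By the mean value inequality along segments,
\[
|U(x)-U(y)|\le\theta|x-y|.
\]
Hence $|\Phi(x)-\Phi(y)|\ge(1-\theta)|x-y|$, and $\Phi$ is injective.

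\emph{Step 2: $\Phi$ is surjective.} Given $y\in\Rr^d$, the map $T(x)=y-U(x)$ is a contraction of ratio $\theta$ on the complete space $\Rr^d$. Banach's fixed point theorem gives $x$ with $x+U(x)=y$.

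\emph{Step 3: analyticity of the inverse.} For every $x$, $D\Phi(x)=I+DU(x)$ is invertible, since $|DU(x)|<1$ in operator norm and the Neumann series converges. The analytic inverse function theorem then shows that $\Phi^{-1}$ is real analytic. So $\Phi$ is an analytic diffeomorphism of $\Rr^d$ commuting with the translations by $2\pi\Zz^d$.

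It remains to pass to the torus. From $\Phi(x+2\pi m)=\Phi(x)+2\pi m$ we get $\Phi^{-1}(y+2\pi m)=\Phi^{-1}(y)+2\pi m$, so $\Phi^{-1}$ also descends to a map $\psi$ of $\Tt^d$. By construction $\psi\circ\phi=\id$ and $\phi\circ\psi=\id$. Analyticity is local and is preserved under the covering projection, so $\phi$ is an analytic diffeomorphism of $\Tt^d$.

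The only point needing care is that injectivity must be checked on the torus, not merely on $\Rr^d$. Suppose $\phi(\bar x)=\phi(\bar y)$. Then $\Phi(x)=\Phi(y)+2\pi m=\Phi(y+2\pi m)$ for some $m\in\Zz^d$, so $x=y+2\pi m$ by injectivity of $\Phi$, i.e. $\bar x=\bar y$. The commutation with deck translations is what makes this work. Everything else is routine.
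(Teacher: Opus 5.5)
Your proof is correct. It shares two steps with the paper: injectivity of the lift via the mean value estimate $|U(x)-U(y)|\le \|Du\|_{C^0}|x-y|$, and analyticity of the inverse via invertibility of $I+DU$ and the inverse function theorem.

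The genuine difference is surjectivity. The paper argues on the torus: it observes that $\phi$ is an injective local analytic diffeomorphism of the compact manifold $\Tt^d$, hence surjective, since its image is open and compact and therefore all of the connected $\Tt^d$. You instead prove surjectivity already on $\Rr^d$, by applying Banach's fixed point theorem to $x\mapsto y-U(x)$. You then descend $\Phi^{-1}$ to the torus using the equivariance $\Phi(x+2\pi m)=\Phi(x)+2\pi m$.

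Your route is more elementary and self-contained. It uses neither compactness nor connectedness. It gives, with $\theta=\|Du\|_{C^0}$, the quantitative bound $|\Phi^{-1}(y)-\Phi^{-1}(y')|\le(1-\theta)^{-1}|y-y'|$ at no extra cost. It also makes explicit the passage from injectivity of the lift $\Phi$ to injectivity of $\phi$ on $\Tt^d$, a step the paper uses without comment. Your final paragraph on this point is in fact already implied by $\psi\circ\phi=\id$.

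The paper's route is shorter, provided one accepts the standard topological fact that a local homeomorphism from a compact manifold onto a connected one is surjective.
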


\begin{proof}
Let $\Phi\colon \Rr^d\to\Rr^d$ defined by $\Phi(x)=x+U(x)$ where $U$ is the lift of $u$.  Then, $\Phi$ is a lift of $\id+u$.  Clearly,  $\Phi$ is analytic.  Moreover,  it is a local diffeomorphism, since $D\Phi=I+DU$ and $|DU(x)|<1$ for every $x\in\Rr^d$.   
It remains to see that $\Phi$ is injective.  Let $\Phi(x)=\Phi(x')$.  Then $x+U(x)=x'+U(x')$,  which gives, 
\begin{align*}
|x-x'|=|U(x')-U(x)|&\leq \int_0^1|DU(x+s(x'-x))\, (x-x')|\,ds \\
&\leq
\int_0^1 \sum_{j=1}^d|\partial_{x_j}U(x+s(x'-x))| \,|x_j-x_j'|\,ds\\
&\leq
\sum_{j=1}^d \| \partial_{x_j}U \|_{C^0} \,|x_j-x_j'| \\
&\leq
\max_{|\alpha|=1}\|\partial^\alpha U\|_{C^0}
|x-x'|\\
&=
\|Du\|_{C^0}|x-x'|.
\end{align*}
So, we must have $x'=x$.   Thus $\Phi$ is injective.  Therefore $\phi$ is an injective local analytic diffeomorphism of the compact manifold $\Tt^d$.  Hence $\phi$ is surjective, and therefore a bijective local analytic diffeomorphism.  Consequently $\phi^{-1}$
 is analytic,  so $\phi$ is an analytic diffeomorphism of $\Tt^d$.
\end{proof}

\begin{lemma}\label{lem:continuity of push-forward}
Let $X \in C^0(\Tt^d,\Rr^d)$,  $f_n\in C^1(\Tt^d,\Rr^d)$ with $n\geq1$, such that $f_n\to f$ in $C^1(\Tt^d,\Rr^d)$ and both $\phi_n := \id+f_n$ and $\phi := \id+f$ are $C^1$ diffeomorphisms of $\mathbb{T}^d$.
Then
\[
\lim_n \|\phi_n^*X- \phi^*X\|_{C^0}=0.
\]
\end{lemma}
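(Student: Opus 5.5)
We need to show $\phi_n^*X-\phi^*X\to 0$ uniformly, where $\phi^*X=(D\phi)^{-1}\,X\circ\phi$ (pull-back of the vector field). The plan is to split the difference into two terms and control each by $C^1$ convergence of $f_n$:
\[
\phi_n^*X-\phi^*X=(D\phi_n)^{-1}\bigl(X\circ\phi_n-X\circ\phi\bigr)+\bigl((D\phi_n)^{-1}-(D\phi)^{-1}\bigr)X\circ\phi .
\]

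First, $f_n\to f$ in $C^0$ means $\phi_n\to\phi$ uniformly on $\Tt^d$ (working with lifts, the differences $f_n-f$ are periodic). Since $X$ is continuous on the compact torus, it is uniformly continuous. Hence $\|X\circ\phi_n-X\circ\phi\|_{C^0}\to0$.

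Second, $D\phi_n=I+Df_n\to I+Df=D\phi$ uniformly. Because $\phi$ is a $C^1$ diffeomorphism, $D\phi(x)$ is invertible for every $x$, and $x\mapsto (D\phi(x))^{-1}$ is continuous on a compact set. So $M:=\sup_x|(D\phi(x))^{-1}|<\infty$. We then use the standard perturbation estimate for inverses: if $|A-B|\le \frac{1}{2|B^{-1}|}$, then $|A^{-1}|\le 2|B^{-1}|$ and
\[
|A^{-1}-B^{-1}|\le |A^{-1}|\,|A-B|\,|B^{-1}|\le 2|B^{-1}|^2|A-B|.
\]
This follows from the Neumann series for $A=B(I+B^{-1}(A-B))$ together with $A^{-1}-B^{-1}=A^{-1}(B-A)B^{-1}$. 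Applied with $A=D\phi_n(x)$ and $B=D\phi(x)$, for $n$ large (uniformly in $x$) it gives $\|(D\phi_n)^{-1}\|_{C^0}\le 2M$ and $\|(D\phi_n)^{-1}-(D\phi)^{-1}\|_{C^0}\le 2M^2\|Df_n-Df\|_{C^0}\to0$.

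Combining these, the first term is at most $2M\|X\circ\phi_n-X\circ\phi\|_{C^0}\to0$, and the second is at most $2M^2\|Df_n-Df\|_{C^0}\|X\|_{C^0}\to0$. The step needing the most care is the uniform invertibility bound. It relies only on compactness of $\Tt^d$ and on $\phi$ being a diffeomorphism; the hypothesis that each $\phi_n$ is a diffeomorphism merely ensures that $\phi_n^*X$ is defined. With the pointwise operator norm taken with respect to the $\ell^1$ norm, as fixed before Lemma~\ref{lemma:diffeo}, everything is routine.
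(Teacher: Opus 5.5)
Your proposal is correct and follows the paper's proof essentially step for step. It uses the same splitting of $\phi_n^*X-\phi^*X$ into $(D\phi_n)^{-1}(X\circ\phi_n-X\circ\phi)$ plus $\bigl((D\phi_n)^{-1}-(D\phi)^{-1}\bigr)X\circ\phi$, uniform continuity of $X$ for the first term, and the Neumann-series bound $\|(D\phi_n)^{-1}\|_{C^0}\le 2\|(D\phi)^{-1}\|_{C^0}$ together with the identity $A^{-1}-B^{-1}=A^{-1}(B-A)B^{-1}$ for the second.
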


\begin{proof}
Write $\phi_f := \id + f$ and $A_f := (I + Df)^{-1}$ so that $\phi_f^*X = A_f  X\circ\phi_f$.
Then,
\begin{align*}
\phi_{f_n}^*X - \phi_f^*X
&= A_{f_n}X\circ \phi_{f_n} - A_fX\circ \phi_f\\
&= A_{f_n}\bigl(X\circ \phi_{f_n} - X\circ \phi_f\bigr)
+ (A_{f_n} - A_f)X\circ \phi_f.
\end{align*}

Since $f_n \to f$ uniformly and $X$ is uniformly continuous (continuous on compact domain), we have
\[
\|X\circ \phi_{f_n} - X\circ \phi_f\|_{C^0} \to 0.
\]
Since $\phi = \mathrm{Id} + f$ is a $C^1$ diffeomorphism on the compact torus $\Tt^d$, the matrix $I + Df$ is uniformly invertible and
\[
\|A_f\|_{C^0}  < +\infty.
\]
Moreover, since $Df_n \to Df$ uniformly, for $n$ sufficiently large we have
\[
\| A_f(Df_n - Df) \|_{C^0} < 1/2.
\]
From
\[
I + Df_n = (I + Df)\big(I + A_f(Df_n - Df)\big),
\]
and a Neumann series argument, we obtain
\[
\|A_{f_n}\|_{C^0} \le 2 \|A_f\|_{C^0}.
\]

Using the identity
\[
A_{f_n} - A_f
= A_{f_n}(Df - Df_n)A_f
\]
we get
\[
\|A_{f_n} - A_f\|_{C^0}
\le 2 \|A_{f}\|_{C^0}^2 \|Df_n - Df\|_{C^0} \to 0.
\]

Finally, 
\[
\|\phi_{f_n}^*X - \phi_f^*X\|_{C^0}
\le 
\|A_{f_n}\|_{C^0} \|X\circ \phi_{f_n} - X\circ \phi_f\|_{C^0}
+ \|A_{f_n} - A_f\|_{C^0} \|X\circ \phi_f\|_{C^0}.
\]
Each factor is bounded, and both terms tend to zero. 
\end{proof}

In the following the assumption $\phi-\id\in\mathcal A_{\rho}$  means that $\phi\colon\Tt^d\rightarrow\Tt^d$ is a map such that there exists $u\in\mathcal A_{\rho}$ with $\phi=\id+u$.

%
%
%
%
%

\begin{lemma}\label{lemma Hn-id vf v1}
If for each $n\geq1$ we have $0<\rho_{n}<\rho_{n-1}$ and $\phi_n-\id\in\A_{\rho_n}$  such that
$$
|\phi_n-\id|_{\rho_n}\leq \rho_{n-1}-\rho_n,
$$ 
then
$$
|\phi_1\circ\cdots\circ \phi_n -\id|_{\rho_n}\leq \sum_{i=1}^n|\phi_i-\id|_{\rho_i}.
$$
\end{lemma}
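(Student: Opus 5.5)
We argue by induction on $n$, using Proposition~\ref{prop:norms}(3) as the only analytic tool. Write $\Phi_n:=\phi_1\circ\cdots\circ\phi_n$ and $S_n:=\sum_{i=1}^n|\phi_i-\id|_{\rho_i}$. For $n=1$ there is nothing to prove. The claim to be propagated is
$$
|\Phi_n-\id|_{\rho_n}\leq S_n .
$$

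For the inductive step we use the decomposition $\Phi_n=\Phi_{n-1}\circ\phi_n$ and set $u_n:=\phi_n-\id\in\A_{\rho_n}$. Then
$$
\Phi_n-\id=(\Phi_{n-1}-\id)\circ(\id+u_n)+u_n .
$$
By the triangle inequality it is enough to prove
$$
|(\Phi_{n-1}-\id)\circ(\id+u_n)|_{\rho_n}\le|\Phi_{n-1}-\id|_{\rho_{n-1}} .
$$
Granting this, the induction hypothesis gives $|\Phi_n-\id|_{\rho_n}\le S_{n-1}+|u_n|_{\rho_n}=S_n$.

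The displayed inequality is exactly Proposition~\ref{prop:norms}(3) with $\rho=\rho_n$, $\nu=\rho_{n-1}-\rho_n>0$, $f=\Phi_{n-1}-\id$ and $u=u_n$. Its two hypotheses hold as follows.
\begin{itemize}
\item The condition $|u_n|_{\rho_n}\le\nu$ is precisely the standing assumption $|\phi_n-\id|_{\rho_n}\le\rho_{n-1}-\rho_n$.
\item The membership $f\in\A_{\rho_n+\nu}=\A_{\rho_{n-1}}$ is part of the induction hypothesis: besides the bound, we carry along that $\Phi_{n-1}-\id\in\A_{\rho_{n-1}}$.
\end{itemize}

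The main point to check is that the composition $f\circ(\id+u_n)$ makes sense on $\Tt^d$. We treat $f$ as $2\pi$-periodic on $\Rr^d$ and compose with the lift $x\mapsto x+u_n(x)$. The result is again $2\pi$-periodic because $u_n$ is, so Proposition~\ref{prop:norms}(3) applies verbatim. Its proof, which expands $e^{ik\cdot(x+u(x))}$ and uses $|e^{ik\cdot u}|_\rho\le e^{|k||u|_\rho}$, uses nothing beyond this periodicity. The same proposition also shows that the composed function has finite $|\cdot|_{\rho_n}$ norm, so $\Phi_n-\id\in\A_{\rho_n}$. This maintains the induction hypothesis and closes the argument.
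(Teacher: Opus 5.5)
Your proof is correct and is essentially the paper's argument: you use the same decomposition $\Phi_n-\id=(\Phi_{n-1}-\id)\circ(\id+u_n)+u_n$, apply Proposition~\ref{prop:norms}(3) with $\rho=\rho_n$ and $\nu=\rho_{n-1}-\rho_n$, and induct on $n$. Your explicit tracking that $\Phi_{n-1}-\id\in\A_{\rho_{n-1}}$ holds at each step of the induction is a detail the paper leaves implicit.
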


\begin{proof}
By writing $\varphi_n=\phi_n-\id \in\A_{\rho_n}$, it is simple to check that
$$
\phi_1\circ\cdots\circ \phi_n-\id=\varphi_n+(\phi_1\circ\cdots\circ \phi_{n-1}-\id)\circ(\id+\varphi_n).
$$
Thus, by Proposition~\ref{prop:norms} inequality (3),
$$
|\phi_1\circ\cdots\circ \phi_n-\id|_{\rho_n}\leq |\varphi_n|_{\rho_n}+|\phi_1\circ\cdots\circ \phi_{n-1}-\id|_{\rho_{n-1}}.
$$
The claim follows immediately.
\end{proof}

\begin{lemma}\label{lemma Hn-Hn-1 v1}
If for each $n\geq1$ we have $0<\rho_{n}<\rho_{n-1}$ and $\phi_n-\id\in\A_{\rho_n}$ such that
$$
|\phi_n-\id|_{\rho_n}
\leq \frac{\rho_{n-1}-\rho_n}{2} ,
$$
then 
\begin{equation}\label{estimate Hn-Hn-1 v1}
|\phi_1\circ\cdots\circ \phi_{n}-\phi_1\circ\dots\circ \phi_{n-1}|_{\rho_{n}}
\leq
\left(1+ 
\frac{2}{\rho_{n-1}-\rho_n}
\sum_{i=1}^{n-1}
|\phi_i-\id|_{\rho_i}
\right)
 |\phi_{n}-\id|_{\rho_n}.
\end{equation}
\end{lemma}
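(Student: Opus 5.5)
Write $\Psi_{n-1}:=\phi_1\circ\cdots\circ\phi_{n-1}$ and $\varphi_n:=\phi_n-\id\in\A_{\rho_n}$. The quantity to estimate is then
$$
\Psi_{n-1}\circ\phi_n-\Psi_{n-1}=\varphi_n+\bigl[(\Psi_{n-1}-\id)\circ(\id+\varphi_n)-(\Psi_{n-1}-\id)\bigr].
$$
The first term contributes exactly $|\phi_n-\id|_{\rho_n}$, which is the ``$1\cdot$'' part of \eqref{estimate Hn-Hn-1 v1}. So the whole task is to bound the bracket by $\frac{2}{\rho_{n-1}-\rho_n}\sum_{i<n}|\phi_i-\id|_{\rho_i}\,|\varphi_n|_{\rho_n}$.

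To bound the bracket, set $g:=\Psi_{n-1}-\id$. First I apply Lemma~\ref{lemma Hn-id vf v1}, whose hypothesis holds because the present hypothesis is stronger. This gives $g\in\A_{\rho_{n-1}}$ with $|g|_{\rho_{n-1}}\le\sum_{i=1}^{n-1}|\phi_i-\id|_{\rho_i}$. Next I write the difference by the fundamental theorem of calculus along the segment:
$$
g\circ(\id+\varphi_n)-g=\int_0^1 Dg\circ(\id+s\varphi_n)\,\varphi_n\,ds .
$$
Then Proposition~\ref{prop:norms}(4) lets me bring the norm inside the integral, and the Banach algebra/operator bound turns the $\rho_n$-norm of each integrand into at most $|Dg\circ(\id+s\varphi_n)|_{\rho_n}\,|\varphi_n|_{\rho_n}$.

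The remaining step is to control $|Dg\circ(\id+s\varphi_n)|_{\rho_n}$ with $\nu:=\rho_{n-1}-\rho_n$.

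1. Split $\nu$ into two halves. The hypothesis gives $|s\varphi_n|_{\rho_n}\le\nu/2$.
2. Composition via Proposition~\ref{prop:norms}(3) (applied to each $\partial^\alpha g$, as in (5)) therefore costs only half the width: $|Dg\circ(\id+s\varphi_n)|_{\rho_n}\le|Dg|_{\rho_n+\nu/2}$.
3. By Proposition~\ref{prop:norms}(1) with loss $\nu/2$, $|Dg|_{\rho_n+\nu/2}\le\frac{2}{\nu}|g|_{\rho_{n-1}}$.

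Combining these with the bound on $|g|_{\rho_{n-1}}$ yields exactly the factor $\frac{2}{\rho_{n-1}-\rho_n}\sum_{i<n}|\phi_i-\id|_{\rho_i}$.

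The main point of care is this width bookkeeping. The halving in the hypothesis is precisely what allows one half-width to be spent on the composition shift and the other half on the Cauchy derivative estimate. One also needs $g\in\A'$ on the intermediate strip so that $Dg$ and the identity above make sense. This is guaranteed because $g\in\A_{\rho_{n-1}}$ and $\rho_{n-1}>\rho_n+\nu/2$, which, via (2), puts $g$ in $\A'_{\rho_n+\nu/2}$. Edge case $n=1$: $g=0$ and the claim is trivial.
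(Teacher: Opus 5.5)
Your proof is correct and follows essentially the paper's argument: the same splitting into $\varphi_n$ plus a difference $G\circ(\id+\varphi_n)-G$, with half of the width $\rho_{n-1}-\rho_n$ spent on the composition shift and the other half on the Cauchy estimate (Proposition~\ref{prop:norms} (6) and (1), which you re-derive via the integral formula, (4) and (5)). The only difference is cosmetic: you treat $\phi_1\circ\cdots\circ\phi_{n-1}-\id$ as a single function and bound it by Lemma~\ref{lemma Hn-id vf v1}, whereas the paper expands it as $\sum_{i<n}\varphi_i\circ F_{i,n-1}$ and bounds each term by $|\varphi_i|_{\rho_i}$ through repeated use of (3).
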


\begin{proof}
Write $h_n=\phi_1\circ\cdots\circ \phi_{n}$ and $\varphi_n=\phi_n-\id \in\A_{\rho_n}$ for any $n\geq1$.
It is simple to check that
\begin{equation*}
\begin{split}
h_{n}-h_{n-1}
&=
\varphi_{n} + 
\sum_{i=1}^{n-1}\left(
\varphi_i\circ F_{i,n}
-\varphi_i\circ F_{i,n-1}
\right),
\end{split}
\end{equation*}
where 
$$
F_{m_1,m_2}:=\phi_{m_1+1}\circ\cdots\circ \phi_{m_2},\quad m_1< m_2,
$$
and $F_{m,m}=\id$.
Clearly, $F_{i,n}=F_{i,n-1}\circ \phi_n$.
So,
$$
h_{n}-h_{n-1}=
\varphi_{n} + 
\sum_{i=1}^{n-1}\left(
\varphi_i\circ F_{i,n-1}\circ(\id+\varphi_n)
-\varphi_i\circ F_{i,n-1}
\right).
$$

From Proposition~\ref{prop:norms} inequalities (6) and (1), 
$$
|\varphi_i\circ F_{i,n}-\varphi_i\circ F_{i,{n-1}}|_{\rho_n}
\leq 
\frac{2}{\rho_{n-1}-\rho_n}
|\varphi_i\circ F_{i,n-1}|_{\rho_{n-1}}
|\varphi_{n}|_{\rho_n}.
$$
Since $\varphi_i\circ F_{i,n-1}=\varphi_i\circ F_{i,n-2}\circ(\id+\varphi_{n-1})$,
again by Proposition~\ref{prop:norms} inequality (3),
\begin{equation*}
\begin{split}
|\varphi_i\circ F_{i,n-1}|_{\rho_{n-1}}
&\leq 
|\varphi_i\circ F_{i,n-2}|_{\rho_{n-2}} \\
&\leq
|\varphi_i|_{\rho_{i}}.
\end{split}
\end{equation*}

Finally,
$$
|h_{n}-h_{n-1}|_{\rho_n}
\leq
\left(1+ 
\frac{2}{\rho_{n-1}-\rho_n}
\sum_{i=1}^{n-1}
|\varphi_i|_{\rho_i}
\right)
 |\varphi_{n}|_{\rho_n}.
$$
\end{proof}

\begin{proposition}\label{prop: conv of Hn}
Suppose that for each $n\geq1$ we have $0<\rho_{n}<\rho_{n-1}$ and $\phi_n-\id\in\A_{\rho_n}$ such that
$$
|\phi_n-\id|_{\rho_n}
\leq \frac{\rho_{n-1}-\rho_n}{2}.
$$
If
$$
\sum_{n=1}^\infty \frac{|\phi_n-\id|_{\rho_n}}{\rho_{n-1}-\rho_{n}}<+\infty\quad\text{and}\quad \rho:=\inf_n \rho_n>0,
$$
then  $(\phi_1\circ\cdots\circ \phi_n-\id)_{n\geq1}$ is a Cauchy sequence in $\A_{\rho}$ and
$$
|h-\id|_\rho \leq \sum_{n=1}^{\infty}
|\phi_n-\id|_{\rho_n}<+\infty,
$$
where $h=\lim_n \phi_1\circ\cdots\circ \phi_n$.
Moreover, if every $\phi_n$ is a diffeomorphism, then  $h$ is also a diffeomorphism.
\end{proposition}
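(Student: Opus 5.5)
Write $h_n:=\phi_1\circ\cdots\circ\phi_n$ and $\varphi_n:=\phi_n-\id$. The plan is to show that $(h_n-\id)_n$ is Cauchy in $\A_\rho$ by summing the increments bounded in Lemma~\ref{lemma Hn-Hn-1 v1}.

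First, Lemma~\ref{lemma Hn-id vf v1} applies, since $|\varphi_n|_{\rho_n}\le(\rho_{n-1}-\rho_n)/2\le\rho_{n-1}-\rho_n$. It gives $|h_n-\id|_{\rho_n}\le\sum_{i\le n}|\varphi_i|_{\rho_i}$. Moreover,
\[
\sum_i|\varphi_i|_{\rho_i}\le \sup_i(\rho_{i-1}-\rho_i)\sum_i\frac{|\varphi_i|_{\rho_i}}{\rho_{i-1}-\rho_i}\le\rho_0\, S,
\]
where $S:=\sum_n |\varphi_n|_{\rho_n}/(\rho_{n-1}-\rho_n)<\infty$. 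So these partial sums are uniformly bounded by $\rho_0 S$. Next, Lemma~\ref{lemma Hn-Hn-1 v1} together with the monotonicity $|\cdot|_\rho\le|\cdot|_{\rho_n}$ (valid since $\rho\le\rho_n$) yields
\[
|h_n-h_{n-1}|_\rho\le |\varphi_n|_{\rho_n}+2\rho_0 S\,\frac{|\varphi_n|_{\rho_n}}{\rho_{n-1}-\rho_n}.
\]
The right-hand side is summable in $n$, because $\sum_n|\varphi_n|_{\rho_n}<\infty$ and the second term is summable by hypothesis. Hence $(h_n-\id)$ is Cauchy in the Banach space $\A_\rho$, and so $h-\id:=\lim_n(h_n-\id)$ exists in $\A_\rho$. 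The bound $|h-\id|_\rho\le\sum_n|\varphi_n|_{\rho_n}$ then follows by letting $n\to\infty$ in Lemma~\ref{lemma Hn-id vf v1}, using $|h_n-\id|_\rho\le|h_n-\id|_{\rho_n}$ and continuity of the norm.

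For the diffeomorphism claim, I would show that $h$ is a $C^1$ limit and that its differential stays uniformly invertible. Convergence in $\A_\rho$ with $\rho>0$ implies convergence in $\A_{\rho'}'$ for any $\rho'<\rho$ by Proposition~\ref{prop:norms}(2), hence $C^1$ convergence. For invertibility, write $Dh_n=\prod_{i}(D\phi_i)\circ F_{i,n}$, where $F_{i,n}$ is as in the proof of Lemma~\ref{lemma Hn-Hn-1 v1}. Each factor satisfies
\[
\|(D\varphi_i)\circ F_{i,n}\|_{C^0}\le |D\varphi_i|_{\rho_i}\le\frac{2}{\rho_{i-1}-\rho_i}|\varphi_i|_{\rho_{i-1}}.
\]
The subtlety here is that $\varphi_i$ is only controlled in $\A_{\rho_i}$. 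So I would apply Proposition~\ref{prop:norms}(1) with a half loss, bounding $\|D\varphi_i\|_{C^0}$ by $|\varphi_i|_{\rho_i}/\rho_i$ or by using the intermediate radius $(\rho_{i-1}+\rho_i)/2$; in the worst case I would accept a slightly smaller radius, which is harmless for a qualitative statement. Then $\prod_i(1-c_i)$ with $\sum_i c_i<\infty$ and $c_i<1$ gives a lower bound on $|\det Dh_n|$ uniform in $n$, so $Dh$ is everywhere invertible and $h$ is a local diffeomorphism.

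Injectivity is the step I expect to be the main obstacle. I would try to deduce it from $h_n^{-1}$ also converging. Setting $g_n:=h_n^{-1}=\phi_n^{-1}\circ\cdots\circ\phi_1^{-1}$, one has $|\phi_i^{-1}-\id|_{C^0}\le|\varphi_i|_{C^0}$, so $(g_n-\id)$ converges uniformly because $\sum_i\|\varphi_i\|_{C^0}<\infty$ and the $g_n$ are equi-Lipschitz by the derivative product bound. The limit $g$ then satisfies $g\circ h=\id$, so $h$ is injective. As a local diffeomorphism of the compact torus that is injective, $h$ is a diffeomorphism, exactly as in Lemma~\ref{lemma:diffeo}; alternatively, $h$ is homotopic to the identity and hence has degree one, which also gives surjectivity.
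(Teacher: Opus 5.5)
Your proof of the Cauchy property and of the bound $|h-\id|_\rho\le\sum_n|\varphi_n|_{\rho_n}$ is essentially the paper's. Both sum the increments from Lemma~\ref{lemma Hn-Hn-1 v1} and pass to the limit in Lemma~\ref{lemma Hn-id vf v1}.

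For the diffeomorphism claim you take a different route. The paper chooses $N_1$ with $\sum_{n>N_1}|\varphi_n|_{\rho_n}<\rho/2$. It reapplies the first part to the tail $\phi_{N_1+1},\phi_{N_1+2},\dots$ and writes $h=\phi_1\circ\cdots\circ\phi_{N_1}\circ\tilde h$ with $|\tilde h-\id|_\rho<\rho/2$. Hence $\|D(\tilde h-\id)\|_{C^0}\le\frac{2}{\rho}|\tilde h-\id|_\rho<1$, and it concludes with the mean-value injectivity argument of Lemma~\ref{lemma:diffeo}.

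You instead separate two things. Local invertibility comes from a uniform lower bound on $\det Dh_n$ plus $C^1$ convergence. Injectivity comes from constructing $h^{-1}$ as the uniform limit of $h_n^{-1}$. This takes more steps, but it gives an explicit inverse and needs only $C^0$ and $C^1$ bounds on the $\varphi_i$ once the first part is done.

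The equi-Lipschitz property is superfluous. Since $h_n^{-1}=\phi_n^{-1}\circ h_{n-1}^{-1}$, you get $\|h_n^{-1}-h_{n-1}^{-1}\|_{C^0}\le\|\phi_n^{-1}-\id\|_{C^0}=\|\varphi_n\|_{C^0}$ directly. Then $g\circ h=\id$ follows from the uniform convergence $h_n^{-1}\to g$, the uniform continuity of $g$, and $h_n\to h$.

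One step needs repair. The chain $\|(D\varphi_i)\circ F_{i,n}\|_{C^0}\le|D\varphi_i|_{\rho_i}\le\frac{2}{\rho_{i-1}-\rho_i}|\varphi_i|_{\rho_{i-1}}$ is unjustified, as you noticed. The correct bound is $\|D\varphi_i\|_{C^0}\le\frac{1}{e\rho_i}|\varphi_i|_{\rho_i}\le c_i:=|\varphi_i|_{\rho_i}/\rho$, which is summable.

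However, $c_i<1$ (or even $\|D\varphi_i\|_{C^0}<1$) is \emph{not} implied by the hypotheses. The constraint $|\varphi_i|_{\rho_i}\le(\rho_{i-1}-\rho_i)/2$ allows large derivatives when $\rho_{i-1}\gg\rho_i$. For example, the shear $x\mapsto x+(a\sin x_2,0,\dots,0)$ is a diffeomorphism with $\|D\varphi\|_{C^0}=|a|$ arbitrary. So your product $\prod_i(1-c_i)$ can break down on the first factors.

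The fix is as follows. Choose $N_1$ with $c_i\le\frac12$ for $i>N_1$, and use $|\det(I+D\varphi_i)|\ge(1-c_i)^d$ for those factors. For $i\le N_1$, use that $\phi_i$ is a diffeomorphism, so $\inf_{\Tt^d}|\det D\phi_i|>0$ by compactness. This is where the hypothesis on the $\phi_n$ genuinely enters, and it is the same head/tail splitting the paper performs. With this fix, your argument is complete.
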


\begin{proof}

Write $h_n = \phi_1\circ\cdots\circ \phi_n$. Then, by Lemma~\ref{lemma Hn-Hn-1 v1} 
\begin{align*}
|h_n-h_{n-1}|_{\rho_{n}}
&\leq
\left(1+ 
\frac{2}{\rho_{n-1}-\rho_n}
\sum_{i=1}^{n-1}
|\phi_i-\id|_{\rho_i}
\right)
 |\phi_{n}-\id|_{\rho_n}\\
 & = \left(\rho_{n-1}-\rho_n+ 
2\sum_{i=1}^{n-1}
|\phi_i-\id|_{\rho_i}
\right)
 \frac{|\phi_{n}-\id|_{\rho_n}}{\rho_{n-1}-\rho_n}\\
 &\leq \left(\rho_{n-1}-\rho_n+ \rho_0-\rho_1+\cdots+\rho_{n-2}-\rho_{n-1}
\right)
 \frac{|\phi_{n}-\id|_{\rho_n}}{\rho_{n-1}-\rho_n}\\
 &\leq ( \rho_0 - \rho)
 \frac{|\phi_{n}-\id|_{\rho_n}}{\rho_{n-1}-\rho_n}.
\end{align*}
Now, given any $\varepsilon>0$, choose $N\geq1$ such that 
$$
\sum_{n=N}^\infty \frac{|\phi_{n}-\id|_{\rho_n}}{\rho_{n-1}-\rho_n} <\frac{\varepsilon}{\rho_{0}-\rho}.
$$
Then, for every $ n>m\geq N$, we have
\begin{align*}
|h_n- h_m|_\rho &\leq \sum_{k=m+1}^{n}|h_{k}-h_{k-1}|_\rho\\
&\leq \sum_{k=m+1}^\infty|h_{k}-h_{k-1}|_\rho \\
&\leq \sum_{k=m+1}^\infty|h_{k}-h_{k-1}|_{\rho_{k}} \\
&\leq \left(\rho_{0}-\rho
\right) \sum_{k=N}^\infty \frac{|\phi_{k}-\id|_{\rho_k}}{\rho_{k-1}-\rho_k}\\
& <\varepsilon.
\end{align*}
This shows that $(h_n-\id)_{n\geq1}$ is a Cauchy sequence. Because $\A_\rho$ is a Banach space, $(h_n)_{n\geq1}$ has a limit $h$ with $h-\id\in\A_\rho$.  Regarding the norm estimate, by Lemma~\ref{lemma Hn-id vf v1}
$$
|h-\id|_\rho = \lim_n |\phi_1\circ\cdots\circ \phi_n-\id|_\rho \leq \lim_n \sum_{i=1}^n|\phi_i-\id|_{\rho_i} <+\infty,
$$
because $ \sum_{i=1}^n|\phi_i-\id|_{\rho_i}\leq \frac{\rho_0-\rho_n}{2}\leq \frac{\rho_0-\rho}{2}$ for all $n\geq1$. 
Finally, if all $\phi_n$ are diffeomorphisms, then we choose $N_1\geq0$ such that $\sum_{n=N_1}^\infty |\phi_n-\id|_{\rho_n}<\frac{\rho}{2}$ and apply the first part of this proposition to the sequence $g_n=\phi_{N_1+n}$ to get $h=\phi_1\circ\cdots \circ \phi_{N_1}\circ \tilde{h}$ with $|\tilde{h}-\id|_\rho<\frac{\rho}{2}$. This shows that $\tilde{h}$ is a diffeomorphism  by Lemma~\ref{lemma:diffeo} since
$$
\|D(\tilde{h}-\id)\|_{C^0}
\leq
\max_{|\alpha|=1}|\partial^\alpha (\tilde h-\id)|_{\rho/2}
\leq
\frac{2}{\rho}|\tilde h-\id|_{\rho}<1,
$$
where we have used Proposition~\ref{prop:norms}~\eqref{prop A1 a}.
Hence, $h$ is also a diffeomorphism.
%
%
\end{proof}

%
%

\section{Proof of Theorem~\ref{thm unif}}
\label{sec:proof thm non-resonant modes}

A statement analogous to Theorem~\ref{thm unif} has been proved in several contexts, see for instance \cite{DiasGaivao2019} and references therein. For the sake of completeness,  we include here a proof.  As in the statement of the theorem, define the constants
$$
\varepsilon:=\frac{\sigma\delta}{8}\quad\text{and}\quad \delta:=\min\left\{\frac{\sigma}{14\sigma + 48|\omega|},\frac{\nu}2\right\},
$$
where $\rho,\sigma,\nu>0$.
From the definition of these constants, the following inequalities are easily verified:
\begin{enumerate}
\item \label{app B ineq 1}
$\varepsilon\leq \frac{\sigma}{8}$,
\item \label{app B ineq 2}
$\delta\leq \min\left(\frac{1}{14}, \frac{\nu}{2}\right)$,
\item \label{app B ineq 3}
$6\delta\varepsilon+\varepsilon+3\delta|\omega|\leq \frac{\sigma}{16}$.
\end{enumerate}
We will use these inequalities along the proof and each time refer to them by their numbers. 
Let $X=\omega + f$ where $f\in \A'_{\rho+\nu}$ and $|f|'_{\rho+\nu}\leq \varepsilon$. 
We seek a near-identity coordinate transformation $U=\id +u$ where $u$ belongs to
$$
\Bs_{\delta}=\{u\in \Kk^-_\sigma\A'_{\rho}\colon |u|'_{\rho}\leq\delta\}.
$$
Notice that, since $|Du|_\rho\leq |u|'_\rho\leq \delta<1$,  the linear map $I+ Du$ is invertible and
$$
U^*X=(I+Du)^{-1}(\omega+f\circ (\id + u)).
$$
By Proposition~\ref{prop:norms}, since $|u|_\rho\leq\nu$ (by inequality (2)), we have a well defined operator $\GG\colon\Bs_{\delta}\to \Kk^-_\sigma\A_{\rho}$ given by,
$$
\GG(u):= \Kk^-_\sigma(I+Du)^{-1}(\omega+f\circ (\id + u)).
$$
Notice that, $\GG(0)=\Kk^-_\sigma X\in\Kk^-_\sigma\A'_{\rho+\nu}$.

We want to find $u\in\Bs_{\delta}$ such that $\GG(u)=0$. We solve this problem using a homotopy, i.e. we will look for a smooth family $u_t\in\Bs_{\delta}$, $t\in[0,1]$, satisfying the equation,
$$
\GG(u_t)=(1-t)\GG(0).
$$


Differentiating with respect to $t$ we conclude that $u_t$ has to satisfy the differential equation
$$
D\GG(u_t)\frac{du_t}{dt}=-\GG(0).
$$
In order to solve this differential equation we invert $D\GG(u)$ and obtain a locally Lipschitz vector field $u\mapsto -D\GG(u)^{-1}\GG(0)$. The following lemmas provide the necessary estimates.

\begin{lemma}\label{lem:DGu}
The map $\GG$ is differentiable in $\Bs_\delta$.  Moreover,  if $u\in\Bs_{\delta}$, then the derivative of $\GG$ at $u$ is a linear operator $D\GG(u)\colon\Kk^-_\sigma\A'_{\rho}\to \Kk^-_\sigma\A_{\rho}$ defined by
\begin{equation}\label{eq:DF}
h\mapsto\Kk^-_\sigma(I+Du)^{-1}\left[(Df)\circ U h-Dh(I+Du)^{-1}(\omega+f\circ U)\right].
\end{equation}
\end{lemma}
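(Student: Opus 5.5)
The lemma is proved by a direct Fréchet-differentiability computation. I will write $\GG$ as the composition of the bounded projection $\Kk^-_\sigma$ with the nonlinear map
$$
\mathcal F(u)=(I+Du)^{-1}\,(\omega+f\circ(\id+u)).
$$
Since $\Kk^-_\sigma$ is linear and bounded with $|\Kk^-_\sigma|_\rho\le1$, it suffices to show that $\mathcal F\colon\Bs_\delta\to\A_\rho$ is differentiable, and then compose with $\Kk^-_\sigma$.

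The map $\mathcal F$ is the product of two pieces.

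The first piece is $u\mapsto A(u):=(I+Du)^{-1}$, viewed as a bounded operator on $\A_\rho$. Because $|Du|_\rho\le|u|'_\rho\le\delta\le 1/14$, the Neumann series converges. The resolvent identity
$$
A(u+h)-A(u)=-A(u+h)\,Dh\,A(u)
$$
then gives $DA(u)h=-A(u)\,Dh\,A(u)$. The remainder is $O(|h|'^2_\rho)$, using $|Dh|_\rho\le|h|'_\rho$ and the uniform bound $|A(u)|_\rho\le 1/(1-\delta)$.

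The second piece is $u\mapsto \omega+f\circ(\id+u)$. Its derivative is $h\mapsto (Df)\circ U\,h$. To control the remainder, I write
$$
f\circ(\id+u+h)-f\circ(\id+u)-(Df\circ U)h=\int_0^1\bigl[Df\circ(\id+u+sh)-Df\circ(\id+u)\bigr]h\,ds.
$$
I then apply Proposition~\ref{prop:norms}(7), with $\nu$ replaced by suitable halves, which is allowed because $|u|_\rho\le\delta\le\nu/2$ and $|h|$ is small. Together with item (4) for the integral, this bounds the remainder by $\frac{2}{\nu}|f|'_{\rho+\nu}|h|_\rho^2$. The difficulty is staying within the hypotheses of item (7). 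It needs $|u|_\rho,|u+sh|_\rho\le\nu/2$, and at the boundary of $\Bs_\delta$ this may force me to shrink $\nu$ to $\nu'=\nu$ minus something. So I will use (7) with radius $\rho$ and loss $\nu$ applied to $f\in\A'_{\rho+\nu}$, noting that $\delta\le\nu/2$ leaves room for small $h$ only when $|u|_\rho<\nu/2$ strictly. If that fails at the boundary, I will treat differentiability on the interior, or use a one-sided derivative, which is all the homotopy argument needs.

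Finally, the product rule in the Banach algebra $\A_\rho$ (the matrix–vector product satisfies $|Mv|_\rho\le|M|_\rho|v|_\rho$) gives
$$
D\mathcal F(u)h=A(u)(Df)\circ U\,h-A(u)\,Dh\,A(u)(\omega+f\circ U).
$$
Applying $\Kk^-_\sigma$ yields exactly \eqref{eq:DF}. It remains to check that $D\GG(u)$ is bounded from $\Kk^-_\sigma\A'_\rho$ to $\Kk^-_\sigma\A_\rho$. For this I use $|(Df)\circ U|_\rho\le|Df|_{\rho+\nu}\le|f|'_{\rho+\nu}$ from Proposition~\ref{prop:norms}(5), $|Dh|_\rho\le|h|'_\rho$, and $|\omega+f\circ U|_\rho\le|\omega|+\varepsilon$ from item (3).
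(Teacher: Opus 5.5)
Your proposal is correct and follows the paper's route. You use the same integral-remainder estimate for $u\mapsto f\circ(\id+u)$ via items (4) and (7) of Proposition~\ref{prop:norms}, the Neumann series for $(I+Du)^{-1}$ (your resolvent identity just makes the paper's ``standard computations'' explicit), and the product rule.

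Your boundary worry is vacuous, so you can drop the interior-only or one-sided fallback. Every $u\in\Kk^-_\sigma\A'_\rho$ has no zero Fourier mode, since $0\notin K^-_\sigma$. Hence $|u|_\rho\le\frac12|u|'_\rho\le\frac{\delta}{2}\le\frac{\nu}{4}$. For $|h|_\rho\le\frac{\nu}{4}$, both $u$ and $u+sh$ then lie in the $\frac{\nu}{2}$-ball, so item (7) applies verbatim at every point of $\Bs_\delta$. This also supplies the check the paper leaves implicit when it applies (7) to $u+s\xi$ with $\xi\in\Bs_\delta$.
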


\begin{proof}

The map $u\in\Bs_{\delta} \mapsto f\circ(\id+u)$ is differentiable with derivative $Df\circ(\id +u)$.  Indeed,  given $\xi\in \Bs_{\delta} $,  let $\Sigma\colon\Tt^d\to\Rr^d$ be defined by
\begin{align*}
\Sigma(x)&:=f(x + u(x) + \xi(x))-f(x+u(x))-Df(x+u(x))\xi(x) \\
&= \int_0^1 (Df(x + u(x) + s\xi(x))-Df(x+u(x)))\xi(x)\,ds.
\end{align*}
Now,  it follows from Proposition~\ref{prop:norms} (items (4) and (7)) and taking into account that $|u|_\rho\leq\nu/2$ by inequality ~\eqref{app B ineq 2} above, that
\begin{align*}
|\Sigma|_\rho&\leq \int_0^1 |Df\circ(\id + u + s\xi)-Df\circ(\id+u)|_\rho|\xi|_\rho\,ds\\
&\leq \int_0^1  \frac{2}{\nu}|f|'_{\rho+\nu} s |\xi|^2_\rho \,ds\\
&\leq  \frac{1}{\nu}|f|'_{\rho+\nu}  |\xi|^2_\rho.
\end{align*}
On the other hand,  because $|Du|_\rho<1$,  the inverse $(I+Du)^{-1}=\sum_{n=0}^\infty (-Du)^n$ depends analytically on $u$.  Together with the previous observation,  this shows that $\GG$ is differentiable in $\Bs_\delta$.  The expression for the derivative follows from standard computations.

\end{proof}

The next lemma is the only place in the proof of Theorem~\ref{thm unif} where far from resonant modes are directly involved.   

\begin{lemma}\label{lem:DF0} $D\GG(0)^{-1}$ is a bounded linear operator from $\Kk^-_\sigma\A_{\rho}$ to $\Kk^-_\sigma\A'_{\rho}$. Moreover 
$$
|D\GG(0)^{-1}|\leq \frac4\sigma.
$$
\end{lemma}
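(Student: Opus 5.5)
The approach is to write $D\GG(0)$ as a dominant diagonal (in Fourier) operator plus a small perturbation, invert the diagonal part using the definition of $K^-_\sigma$, and conclude with a Neumann series.

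Setting $u=0$ in \eqref{eq:DF} gives
$$
D\GG(0)h=\Kk^-_\sigma\bigl[Df\,h-Dh\,(\omega+f)\bigr]=-\LL h+Ph,
$$
where $\LL h:=\Kk^-_\sigma(Dh\,\omega)$ and $Ph:=\Kk^-_\sigma\bigl[Df\,h-Dh\,f\bigr]$.

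\textbf{Step 1: inverting $\LL$.} On Fourier coefficients, $(Dh\,\omega)_k=i(k\cdot\omega)h_k$. So $\LL$ acts diagonally on $\Kk^-_\sigma\A'_\rho$ with multipliers $i(k\cdot\omega)$, $k\in K^-_\sigma$. The mode $k=0$ is not in $K^-_\sigma$, since $0>0$ fails. Hence every $k\in K^-_\sigma$ has $|k|\ge1$ and $|k\cdot\omega|>\sigma|k|>0$. The inverse is $(\LL^{-1}g)_k=g_k/(i\,k\cdot\omega)$, and
$$
|\LL^{-1}g|'_\rho=\sum_{k\in K^-_\sigma}\frac{(1+|k|)}{|k\cdot\omega|}|g_k|e^{\rho|k|}\le \sup_{|k|\ge1}\frac{1+|k|}{\sigma|k|}\,|g|_\rho\le\frac{2}{\sigma}|g|_\rho .
$$
Thus $\LL\colon\Kk^-_\sigma\A'_\rho\to\Kk^-_\sigma\A_\rho$ is a bijection with $|\LL^{-1}|\le 2/\sigma$.

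\textbf{Step 2: the perturbation is small.} Use the product estimate $|Dg\,h|_\rho\le|g|'_\rho|h|_\rho$ stated at the start of Appendix~\ref{banach spaces}, together with $|\Kk^-_\sigma|\le1$ and the bounds $|h|_\rho\le|h|'_\rho$ and $|f|_\rho\le|f|'_\rho\le|f|'_{\rho+\nu}\le\varepsilon$. This gives
$$
|Ph|_\rho\le |f|'_\rho|h|_\rho+|h|'_\rho|f|_\rho\le 2\varepsilon|h|'_\rho\le\frac{\sigma}{4}|h|'_\rho,
$$
where the last step uses inequality~\eqref{app B ineq 1}, namely $\varepsilon\le\sigma/8$.

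\textbf{Step 3: Neumann series.} Write $D\GG(0)=-\LL(I-\LL^{-1}P)$. The operator $\LL^{-1}P$ maps $\Kk^-_\sigma\A'_\rho$ into itself with norm at most $\frac2\sigma\cdot\frac\sigma4=\frac12$. Therefore $I-\LL^{-1}P$ is invertible on the Banach space $\Kk^-_\sigma\A'_\rho$, with inverse of norm at most $2$. It follows that
$$
D\GG(0)^{-1}=-(I-\LL^{-1}P)^{-1}\LL^{-1}\colon\Kk^-_\sigma\A_\rho\to\Kk^-_\sigma\A'_\rho
$$
is bounded and $|D\GG(0)^{-1}|\le 2\cdot\frac2\sigma=\frac4\sigma$.

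The only delicate point is Step 1. It needs the observation that $k=0\notin K^-_\sigma$, and the crude bound $(1+|k|)/|k|\le2$, which turns the small-divisor lower bound $|k\cdot\omega|>\sigma|k|$ into a gain of a full derivative. The rest is routine bookkeeping with the norm inequalities already established.
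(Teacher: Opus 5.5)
Your proof is correct and follows essentially the same route as the paper. The paper likewise writes $D\GG(0)=\Kk^-_\sigma(L_f-D_\omega)$, inverts $D_\omega$ on the far-from-resonant modes with bound $2/\sigma$ via $(1+|k|)/|k|\le 2$, bounds $|L_fh|_\rho\le 2|f|'_\rho|h|'_\rho\le\frac{\sigma}{4}|h|'_\rho$, and concludes with a Neumann series; your explicit remark that $0\notin K^-_\sigma$ is a point the paper leaves implicit.
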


\begin{proof}
By Lemma~\ref{lem:DGu}, $D\GG(0)\colon\Kk^-_\sigma\A'_{\rho}\to \Kk^-_\sigma\A_{\rho}$ is given by
$$
D\GG(0)h=\Kk^-_\sigma(L_f-D_{\omega})h,
$$
where $L_fh=Df\,h-Dh\,f$ and $D_{\omega} h=Dh\,\omega$. 

We wish to invert $D\GG(0)$. To that end, we start by studying the linear operator $D_{\omega}\colon\Kk^-_{\sigma}\A'_{\rho}\to \Kk^-_{\sigma}\A_{\rho}$.  The Fourier coefficients
of $h\in \A$ are given by
$$
(D_\omega h)_k = \sum_j \omega_j (\partial_j h)_k  = i \sum_j \omega_j  k_j h_k = i (k\cdot\omega) h_k.
$$
Using this formula for the Fourier coefficients, we see that $D_\omega$ is invertible with bounded inverse $D_\omega^{-1}\colon \Kk^-_\sigma\A_{\rho}\to \Kk^-_\sigma\A'_{\rho}$. Indeed, given $h\in \Kk^-_\sigma\A_{\rho}$,  we have
$$
|D_\omega^{-1} h|'_{\rho}= \sum_{k\in K^-_\sigma}(1+|k|)\frac{|h_k|}{|k\cdot\omega|}e^{\rho|k|}
< \sum_{k\in K^-_\sigma}(1+|k|)\frac{|h_k|}{\sigma|k|}e^{\rho|k|}
\leq \frac2\sigma|h|_{\rho},
$$
since $\frac{1+|k|}{|k|}\leq 2$ for every $k\in K^-_\sigma$.
Regarding the operator $L_f$, it is easy to see that $\Kk^-_\sigma L_f$ is a bounded linear operator from $\Kk^-_\sigma\A'_{\rho}$  to $\Kk^-_\sigma\A_{\rho}$. Indeed, given $h\in \Kk^-_\sigma\A'_{\rho}$, we have
$$
|L_f h|_{\rho}\leq |Df\,h|_{\rho}+|Dh\,f|_{\rho}\leq |f|'_{\rho}|h|_{\rho}+|h|'_{\rho}|f|_{\rho}\leq 2|f|_{\rho}'|h|'_{\rho}.
$$
This implies that $D_\omega^{-1} \Kk^-_\sigma L_f$ is a bounded linear operator on $\Kk^-_\sigma\A'_{\rho}$ with norm $|D_\omega^{-1} \Kk^-_\sigma L_f|\leq \frac4\sigma|f|_{\rho}'\leq \frac12$ (by inequality (1)). 
Therefore, $(D_\omega^{-1} \Kk^-_\sigma L_f - \Ii)^{-1}$ is a bounded linear operator on $\Kk^-_\sigma\A'_{\rho}$ with norm $\leq 2$. 
Because $D\GG(0)=\Kk^-_\sigma(L_f-D_{\omega}) = D_\omega(D_\omega^{-1} \Kk^-_\sigma L_f - \Ii)$, the linear operator $(D_\omega^{-1} \Kk^-_\sigma L_f - \Ii)^{-1}D_\omega^{-1}\colon \Kk^-_\sigma\A_{\rho}\to \Kk^-_\sigma\A'_{\rho}$ is the desired inverse of $D\GG(0)$. The bound of the inverse is 
$$
|D\GG(0)|\leq  |(D_\omega^{-1}\Kk^-_\sigma L_f - \Ii)^{-1}D_\omega^{-1}|\leq \frac4\sigma.
$$
\end{proof}

\begin{lemma}\label{lem:DFu0}
If $u\in\Bs_{\delta}$, then the linear operator $D\GG(u)-D\GG(0)$ mapping $\Kk_\sigma^-\A'_{\rho}$ to $\Kk_\sigma^-\A_{\rho}$ is bounded and
$$
|D\GG(u)-D\GG(0)|\leq \frac{\sigma}{8}.
$$
\end{lemma}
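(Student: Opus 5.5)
We write $D\GG(u)h-D\GG(0)h$ using the formula \eqref{eq:DF}, drop the projection $\Kk^-_\sigma$ (its norm is at most $1$), and split the difference into a sum of terms, each containing a factor that vanishes when $u=0$. Set $A_u:=(I+Du)^{-1}$ and $U=\id+u$. Then
\begin{align*}
D\GG(u)h-D\GG(0)h&=(A_u-I)\,(Df\circ U)h + \bigl(Df\circ U-Df\bigr)h\\
&\quad -\bigl(A_u\,Dh\,A_u(\omega+f\circ U)-Dh\,(\omega+f)\bigr).
\end{align*}
The last bracket splits further as
\begin{align*}
&(A_u-I)Dh\,A_u(\omega+f\circ U)+Dh\,(A_u-I)(\omega+f\circ U)\\
&\quad +Dh\,(f\circ U-f).
\end{align*}

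All of these terms can be estimated with Proposition~\ref{prop:norms} and the algebra property of $|\cdot|_\rho$. Since $|Du|_\rho\le|u|'_\rho\le\delta\le 1/14$, the Neumann series gives $|A_u|_\rho\le 1/(1-\delta)$ and $|A_u-I|_\rho\le\delta/(1-\delta)$. By (5) and (1), $|Df\circ U|_\rho\le|Df|_{\rho+\nu}\le|f|'_{\rho+\nu}\le\varepsilon$. By (7), with $|u|_\rho\le\delta\le\nu/2$, we get $|Df\circ U-Df|_\rho\le\frac2\nu\varepsilon\delta\le\varepsilon$. By (6), $|f\circ U-f|_\rho\le\varepsilon\delta$. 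By (3), $|\omega+f\circ U|_\rho\le|\omega|+\varepsilon$. Finally, $|Dh\,g|_\rho\le|h|'_\rho|g|_\rho$ and $|h|_\rho\le|h|'_\rho$.

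Collecting these bounds, with $|h|'_\rho=1$, the operator norm is bounded by
\[
\frac{\delta\varepsilon}{1-\delta}+\frac{2\delta\varepsilon}{\nu}+\frac{\delta}{(1-\delta)^2}(|\omega|+\varepsilon)+\frac{\delta}{1-\delta}(|\omega|+\varepsilon)+\delta\varepsilon .
\]
Using $1/(1-\delta)\le 14/13$ and $2\delta/\nu\le1$, this quantity is at most
\[
(\text{a few})\,\delta\varepsilon+\varepsilon+3\delta|\omega| .
\]
The claim then follows by comparing with inequality \eqref{app B ineq 3}, which gives $6\delta\varepsilon+\varepsilon+3\delta|\omega|\le\sigma/16\le\sigma/8$.

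The main obstacle is the constant bookkeeping. Two points need care. First, the factor $\frac{2}{\nu}\delta\le1$ turns the term involving $Df\circ U-Df$ into a bare $\varepsilon$, which is why inequality \eqref{app B ineq 3} has an isolated $\varepsilon$ term. Second, the $|\omega|$ coefficient must total at most $3\delta$: it is $\delta(1/(1-\delta)^2+1/(1-\delta))\approx 2.3\,\delta$, which is fine. If the $\delta\varepsilon$ coefficient overshoots $6$, there is still a factor-$2$ margin, since \eqref{app B ineq 3} gives $\sigma/16$ while only $\sigma/8$ is needed.

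Boundedness of the operator on $\Kk^-_\sigma\A'_\rho\to\Kk^-_\sigma\A_\rho$ is immediate from the same estimates.
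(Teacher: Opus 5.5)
Your proof is correct, and the constants work out. With $\delta\le 1/14$:
\begin{itemize}
\item the coefficient of $\delta\varepsilon$ in your sum is $\frac{2}{1-\delta}+\frac{1}{(1-\delta)^2}+1\approx 4.31\le 6$, and you should state this number rather than ``a few'';
\item the coefficient of $\delta|\omega|$ is $\frac{1}{1-\delta}+\frac{1}{(1-\delta)^2}\approx 2.24\le 3$;
\item $\frac{2\delta}{\nu}\varepsilon\le\varepsilon$.
\end{itemize}
So inequality~\eqref{app B ineq 3} gives you $\sigma/16$.

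Your route differs from the paper's only in the algebraic splitting. The paper first pulls $(I+Du)^{-1}$ out to the left, which forces it to multiply $D\GG(0)h$ through by $(I+Du)$. It writes the difference as $\Kk^-_\sigma(I+Du)^{-1}(A_1+A_2+A_3)$, with
\begin{itemize}
\item $A_1=(Df\circ U-Df-Du\,Df)h$,
\item $A_2=Du\,Dh(\omega+f)$,
\item $A_3=-Dh(I+Du)^{-1}\bigl(f\circ U-f-Du(\omega+f)\bigr)$.
\end{itemize}
It then bounds these with the same items (3), (5)--(7) of Proposition~\ref{prop:norms} that you use. It applies the crude bound $1/(1-\delta)\le 2$, which yields exactly $2(6\delta\varepsilon+\varepsilon+3\delta|\omega|)\le\sigma/8$.

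Your direct telescoping into five terms avoids that rearrangement. Each of your terms carries a single factor vanishing at $u=0$: $A_u-I$, $Df\circ U-Df$, or $f\circ U-f$. This makes it transparent which term produces each part of inequality~\eqref{app B ineq 3}. Combined with the sharper bound $1/(1-\delta)\le 14/13$, it gives $\sigma/16$, a factor $2$ better than the lemma requires. The paper's grouping gains nothing beyond a somewhat more compact final chain of inequalities.
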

\begin{proof}
According to \eqref{eq:DF} we can write $\left(D\GG(u)-D\GG(0)\right)h=\Kk_\sigma^-(I+Du)^{-1}g$
where 
$$
g=(Df)\circ U h-Dh(I+Du)^{-1}(\omega+f\circ U)-(I+Du)\left[Dfh-Dh(\omega+f)\right].
$$
Now, adding and subtracting $(I+Du)Df h$ and expanding the last term in the previous expression, we get 
\begin{align*}
g&=\left[(Df)\circ U -Df-DuDf\right]h + (I+Du)Df h\\
&-Dh(I+Du)^{-1}(\omega + f\circ U) - (I+Du)Df h\\
&+Dh(\omega+f)+DuDh(\omega+f)
\end{align*}
which further simplifies to give
\begin{align*}
g&=A_1+A_2 -Dh(I+Du)^{-1}(\omega + f\circ U) +Dh(\omega+f)\\
&= A_1+A_2 -Dh(I+Du)^{-1}(\omega+f\circ U - (I+Du)(\omega+f))\\
&=A_1+A_2+A_3
\end{align*}
where 
\begin{align*}
A_1&=\left(Df\circ(\id+u)-Df-Du\,Df\right)h,\\
A_2&=Du\,Dh(\omega+f),\\
A_3&=-Dh(I+Du)^{-1}\left(f\circ(\id+u)-f-Du(\omega+f)\right).
\end{align*}
It follows from $|u|'_\rho\leq\frac\nu2$ (inequality (2)) and Proposition~\ref{prop:norms} that,
\begin{align*}
|A_1|_{\rho}&\leq \left(1+\frac{2}{\nu}\right)|f|'_{\rho+\nu}|u|'_{\rho}|h|_{\rho}\leq \left(1+\frac{2}{\nu}\right) \varepsilon\delta |h|'_{\rho},\\
|A_2|_{\rho}&\leq(|\omega| + |f|_{\rho})|u|'_{\rho}|h|'_{\rho}\leq (|\omega| + \varepsilon)\delta|h|'_{\rho},\\
|A_3|_{\rho}&\leq  \frac{|h|'_{\rho}}{1-|u|'_{\rho}}\left[|f|'_{\rho+\nu}|u|_{\rho}+|u|'_{\rho}(|\omega|+ |f|_{\rho})\right]\leq \frac{\delta}{1-\delta}\left(|\omega|+ 2\varepsilon\right)|h|'_{\rho}.
\end{align*}
Thus,
\begin{align*}
|D\GG(u)-D\GG(0)|
&\leq \frac{\delta}{1-\delta}\left(\left(1+\frac{2}{\nu}\right) \varepsilon + |\omega| + \varepsilon + \frac{1}{1-\delta}\left(|\omega|+ 2\varepsilon\right)\right)\\
&\leq 2\delta\left(\left(6+\frac{2}{\nu}\right) \varepsilon + 3|\omega|\right)\\
&\leq 12\delta\varepsilon + 2\varepsilon + 6\delta|\omega|_\rho\\
&\leq \frac{\sigma}{8},
\end{align*}
where we have used $\delta\leq \frac{\sigma}{14\sigma + 48|\omega|}\leq \frac12$ (immediate from the definition of $\delta$), $\frac{2\delta}{\nu}\leq 1$ (inequality~\eqref{app B ineq 2}) and inequality~\eqref{app B ineq 3}).
\end{proof}

\begin{lemma}\label{lem:DFu}
If $u\in\Bs_{\delta}$, then $D\GG(u)^{-1}$ is a bounded linear operator from $\Kk_\sigma^-\A_{\rho}$ to $\Kk_\sigma^-\A'_{\rho}$ with
\begin{align*}
|D\GG(u)^{-1}|\leq \frac{8}{\sigma}.
\end{align*}
Moreover,  $\Bs_{\delta} \ni u\mapsto D\GG(u)^{-1}$ is locally Lipschitz.
\end{lemma}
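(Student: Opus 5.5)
We treat $D\GG(u)$ as a perturbation of $D\GG(0)$ and invert it by a Neumann series, combining Lemma~\ref{lem:DF0} with Lemma~\ref{lem:DFu0}. Write
\[
D\GG(u)=D\GG(0)\bigl(\Ii+D\GG(0)^{-1}(D\GG(u)-D\GG(0))\bigr).
\]
Here $D\GG(u)-D\GG(0)$ maps $\Kk_\sigma^-\A'_\rho$ to $\Kk_\sigma^-\A_\rho$, and $D\GG(0)^{-1}$ maps back to $\Kk_\sigma^-\A'_\rho$. So $T_u:=D\GG(0)^{-1}(D\GG(u)-D\GG(0))$ is a bounded operator on the Banach space $\Kk_\sigma^-\A'_\rho$. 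By Lemma~\ref{lem:DF0} and Lemma~\ref{lem:DFu0},
\[
|T_u|\le \frac4\sigma\cdot\frac\sigma8=\frac12 .
\]
Hence $\Ii+T_u$ is invertible on $\Kk_\sigma^-\A'_\rho$, with $|(\Ii+T_u)^{-1}|\le 2$. It follows that
\[
D\GG(u)^{-1}=(\Ii+T_u)^{-1}D\GG(0)^{-1}\colon \Kk_\sigma^-\A_\rho\to\Kk_\sigma^-\A'_\rho
\]
exists and satisfies $|D\GG(u)^{-1}|\le 2\cdot\frac4\sigma=\frac8\sigma$. The only bookkeeping needed is to keep track of which of the two spaces, $\A_\rho$ or $\A'_\rho$, each operator acts on, so that the Neumann series is formed in a single Banach space.

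For the local Lipschitz property, we use the resolvent identity
\[
D\GG(u)^{-1}-D\GG(v)^{-1}=D\GG(u)^{-1}\bigl(D\GG(v)-D\GG(u)\bigr)D\GG(v)^{-1}.
\]
This gives
\[
|D\GG(u)^{-1}-D\GG(v)^{-1}|\le \frac{64}{\sigma^2}\,|D\GG(u)-D\GG(v)|,
\]
so it suffices to show that $u\mapsto D\GG(u)$ is locally Lipschitz from $\Bs_\delta$, with the $|\cdot|'_\rho$ norm, into the bounded operators $\Kk_\sigma^-\A'_\rho\to\Kk_\sigma^-\A_\rho$.

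The Lipschitz estimate on $D\GG$ is the main step. We use the formula \eqref{eq:DF} and examine each factor separately.
\begin{enumerate}
\item The map $u\mapsto(I+Du)^{-1}$ is Lipschitz. It is given by a Neumann series with $|Du|_\rho\le\delta<1$, and
\[
(I+Du)^{-1}-(I+Dv)^{-1}=(I+Du)^{-1}(Dv-Du)(I+Dv)^{-1},
\]
with norm at most $(1-\delta)^{-2}|u-v|'_\rho$.
\item The map $u\mapsto f\circ(\id+u)$ is Lipschitz in $\A_\rho$ by Proposition~\ref{prop:norms}(6), with constant $|Df|_{\rho+\nu}\le|f|'_{\rho+\nu}$.
\item The map $u\mapsto Df\circ(\id+u)$ is Lipschitz, as an operator on $\A_\rho$, by Proposition~\ref{prop:norms}(7), using $|u|_\rho,|v|_\rho\le\nu/2$ from inequality~\eqref{app B ineq 2}; the constant is $\frac2\nu|f|'_{\rho+\nu}$.
\end{enumerate}
Next we expand $D\GG(u)h-D\GG(v)h$ by adding and subtracting mixed terms, and bound each product using the Banach algebra property and $|Dh\,g|_\rho\le|h|'_\rho|g|_\rho$. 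This yields
\[
|D\GG(u)-D\GG(v)|\le C\,|u-v|'_\rho,
\]
where $C$ depends only on $\sigma,\nu,\delta,|\omega|$ and $\varepsilon$. The Lipschitz property therefore holds even globally on $\Bs_\delta$, which is more than the statement requires.
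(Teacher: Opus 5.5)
Your proof is correct and follows the paper's argument: a Neumann-series perturbation of $D\GG(0)$ via Lemmas~\ref{lem:DF0} and~\ref{lem:DFu0}, followed by the same resolvent identity for the Lipschitz part. The factorization differs only cosmetically: you write $D\GG(0)(\Ii+T_u)$ and invert in $\Kk^-_\sigma\A'_\rho$, while the paper writes $(\Ii+(D\GG(u)-D\GG(0))D\GG(0)^{-1})D\GG(0)$ and inverts in $\Kk^-_\sigma\A_\rho$.

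You go further than the paper on the Lipschitz bound for $u\mapsto D\GG(u)$. The paper only asserts this from differentiability of $u\mapsto D\GG(u)$, which is not proved there. Your factor-by-factor estimate, using Proposition~\ref{prop:norms}(6),(7) and the Neumann series for $(I+Du)^{-1}$, supplies that step and even gives a global Lipschitz constant on $\Bs_\delta$.
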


\begin{proof}
By Lemmas~\ref{lem:DF0} and \ref{lem:DFu0},
\begin{align*}
| (D\GG(u)-D\GG(0))D\GG(0)^{-1} |
\leq \frac{\sigma}{8}\cdot\frac{4}{\sigma}= \frac12.
\end{align*}

Hence the operator
\[
I+(D\GG(u)-D\GG(0))D\GG(0)^{-1}
\]
is invertible and its inverse is given by the convergent Neumann series
\[
\left[I+(D\GG(u)-D\GG(0))D\GG(0)^{-1}\right]^{-1}
=
\sum_{n\ge 0} \left[-(D\GG(u)-D\GG(0))D\GG(0)^{-1}\right]^n.
\]

Therefore
\begin{align*}
D\GG(u)^{-1}
&= D\GG(0)^{-1}
\left[I+(D\GG(u)-D\GG(0))D\GG(0)^{-1}\right]^{-1}.
\end{align*}


From the Neumann series and the estimate above,
\[
\left|
\left[I+(D\GG(u)-D\GG(0))D\GG(0)^{-1}\right]^{-1}
\right|
\leq \sum_{n\ge 0} (1/2)^n = 2.
\]
Hence
\[
|D\GG(u)^{-1}|
\leq 2|D\GG(0)^{-1}|
\leq \frac{8}{\sigma}.
\]

%
Let \(u,v \in \Bs_\delta\). 
Notice that
\begin{align*}
D\GG(u)^{-1}-D\GG(v)^{-1}
=
D\GG(u)^{-1}\bigl(D\GG(v)-D\GG(u)\bigr)D\GG(v)^{-1}.
\end{align*}

Taking norms and using the uniform bound above,
\[
|D\GG(u)^{-1}-D\GG(v)^{-1}|
\le
\frac{64}{\sigma^2}
|D\GG(u)-D\GG(v)|.
\]

Since \(u \mapsto D\GG(u)\) is continuous and differentiable on \(\Bs_\delta\),
it is locally Lipschitz; hence there exists \(L>0\) such that
\[
|D\GG(u)-D\GG(v)|\le L|u-v|'_\rho.
\]

Combining the estimates yields
\[
|D\GG(u)^{-1}-D\GG(v)^{-1}|
\le \frac{64L}{\sigma^2}|u-v|'_\rho.
\]

Thus \(u \mapsto D\GG(u)^{-1}\) is locally Lipschitz.

\end{proof}


We now conclude the proof of Theorem~\ref{thm unif}.  Because $u\mapsto D\GG(u)^{-1}$ is locally Lipschitz,  by Cauchy's existence theorem (e.g. \cite[Theorem 10.4.5 and Remark 10.4.6]{Dieudonne1960}) $u_t$ uniquely exists for every $t\in[0,1]$ and satisfies,
$$
u_t=-\int_0^tD\GG(u_s)^{-1}\GG(0)\,ds.
$$
Since $\GG(0)=\Kk_\sigma^- X\in\Kk_\sigma^-\A_{\rho+\nu}$, it follows from Lemma~\ref{lem:DFu} that, 
\begin{equation}\label{norm of u}
|u_t|'_{\rho}\leq t \sup_{u\in\Bs_\delta}|D\GG(u)^{-1}||\GG(0)|_{\rho+\nu}\leq\frac{8}{\sigma}|f|'_{\rho+\nu}\leq \frac{8}{\sigma}\varepsilon=\delta.
\end{equation}
This guarantees that $u_t\in\Bs_\delta$ for every $t\in[0,1]$. So $X\mapsto u:=u_1$ defines an operator from $\Vs_\varepsilon$ to $\Bs_\delta$ such that $\Kk^-_\sigma(\id+u)^*X = 0$.
In addition, 
$$
(\id+u)^*X-\omega=\Kk^+_\sigma f+\Kk^+_\sigma(A_1 +A_2+A_3)
$$
where 
\begin{align*}
A_1&=Df\,u-Duf-DuDf\,u,\\
A_2&=\left(I-Du\right)\left(f\circ(\id+u)-f-Df\,u\right),\\
A_3&=\sum_{n=2}^\infty(-Du)^n\left(\omega+f\circ(\id+u)\right).
\end{align*}
Using $|u|'_\rho \leq \frac{1}{14}$ (immediate from the definition of $\delta$) and Proposition~\ref{prop:norms},  we get,
\begin{align*}
|A_1|_{\rho}&\leq \frac14|f|'_{\rho+\nu},\\
|A_2|_{\rho}&\leq\frac14|f|'_{\rho+\nu},\\
|A_3|_{\rho}&\leq 2(|u|'_\rho)^2(|\omega|+|f|'_{\rho+\nu}).
\end{align*}
From \eqref{norm of u} we get $|u|'_\rho  \leq \frac{8}{\sigma}|f|'_{\rho+\nu}$, which gives
\begin{align*}
2(|u|'_\rho)^2(|\omega|+|f|'_{\rho+\nu})&\leq  \frac{16}{\sigma}|f|'_{\rho+\nu} \delta (|\omega|+\varepsilon)\\
&=\frac{16}{\sigma} \delta (|\omega|+\frac{\sigma\delta}{8})|f|'_{\rho+\nu}\\
&\leq\delta \frac{16|\omega|+2\sigma}{\sigma}|f|'_{\rho+\nu} \\
&\leq \frac 12|f|'_{\rho+\nu},
\end{align*}
since $\delta\leq \frac{\sigma}{14\sigma+48|\omega|}$,  which follows immediately from the definition of $\delta$.
Thus
$$
|(\id+u)^*X-\omega|_{\rho}\leq |f|'_{\rho+\nu}+|A_1 |_{\rho}+|A_2|_{\rho}+|A_3|_{\rho}\leq 2|f|'_{\rho+\nu}.
$$
This concludes the proof of Theorem~\ref{thm unif}.\qed

\section{Dirichlet's theorem and growth rate of best approximation denominators} \label{appendix C}
	The following lemmas are well-known. The first is equivalent to Dirichlet's theorem for linear forms. We give the easy proof of this lemma.  In \cite{lagarias1983}, Lagarias prove the exponential growth rate of the sequence $(|p_n|)_{n}$ of best approximation vectors. The exponential growth rate of the sequence of short vectors $(|\hat p_n|)_n$ can be deduced from  Lagarias'result. A direct proof is easy and gives a better constant, see below. 
	
	In the definition of best approximations we have used the $\ell^1$-norm on $\Rr^{d-1}$. More generally, best approximations  to linear forms can be defined with any norm on $\Rr^{d-1}$. However, the sequence of best approximations depends on the norm.  The third lemma states that  given two norms, between two consecutive best approximations associated with  the first norm, there cannot be too many best approximations associated with the second norm.
	
	\begin{lemma}\label{lem:Dirichlet} Let $\omega=(\alpha,1)$ be in $\Rr^d$ and let $(p_n)_n$ be the sequence of best approximation vectors to the linear form associated with $\omega$ and the norm $\ell^1$. Then for all $n\geq 0$,
		$|\hat p_{n+1}|^{d-1}|p_n\cdot\omega|\leq (d-1)!$.
	\end{lemma}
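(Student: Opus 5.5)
We will prove this with Minkowski's convex body theorem applied to a symmetric convex body in $\Rr^d$ built from the two quantities that define best approximations. Fix $n\geq 0$, write $Q:=|\hat p_{n+1}|$ and $\eta:=|p_n\cdot\omega|$, and consider
\[
C:=\{x\in\Rr^d\colon |\hat x|< Q,\ |x\cdot\omega|\leq \eta\}.
\]
This set is convex and symmetric about the origin. We compute its volume using the change of variables $x\mapsto (\hat x, x\cdot\omega)$. Its Jacobian is $1$, because $\omega_d=1$ and the matrix $M_\omega$ is unimodular. The volume is therefore the volume of the open $\ell^1$-ball of radius $Q$ in $\Rr^{d-1}$ times $2\eta$:
\[
\operatorname{vol}(C)=\frac{2^{d-1}Q^{d-1}}{(d-1)!}\cdot 2\eta=\frac{2^d Q^{d-1}\eta}{(d-1)!}.
\]

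Argue by contradiction and suppose $Q^{d-1}\eta>(d-1)!$. Then $\operatorname{vol}(C)>2^d$. Minkowski's theorem then gives a nonzero $b\in\Zz^d$ lying in the interior of the closure of $C$. Concretely, $|\hat b|<Q$ and $|b\cdot\omega|<\eta$, where strictness in the second inequality comes from applying Minkowski to the slightly shrunk body with $\eta$ replaced by $\eta'<\eta$ while keeping volume $>2^d$.

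We now rule out such a $b$ in two cases.
\begin{enumerate}
\item If $\hat b=0$, then $b\cdot\omega=b_d$ is a nonzero integer, so $|b\cdot\omega|\geq 1$. But $\eta=|p_n\cdot\omega|\leq|p_0\cdot\omega|\leq 1/2$, since one can choose $p_{0,d}$ so that $|p_0\cdot\omega|\leq 1/2$ and the errors are nonincreasing. This contradicts $|b\cdot\omega|<\eta$.
\item If $\hat b\neq 0$, then $0<|\hat b|<|\hat p_{n+1}|$ and $|b\cdot\omega|<|p_n\cdot\omega|$. Take $b$ with these properties minimizing $|b\cdot\omega|$ among integer vectors with $0<|\hat b|<Q$; this is possible by discreteness, or one may pass to a best approximation dominating $b$, as in the proof of Lemma~\ref{lem:W}. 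That minimizer is a best approximation vector whose norm $|\hat b|$ lies strictly below $|\hat p_{n+1}|$ and whose error is strictly below that of $p_n$. Since the errors of best approximations strictly decrease as the norm increases, its norm must exceed $|\hat p_n|$. This contradicts the fact that $p_n,p_{n+1}$ are consecutive best approximations, with no best approximation norm strictly between $|\hat p_n|$ and $|\hat p_{n+1}|$.
\end{enumerate}
Hence $Q^{d-1}\eta\leq(d-1)!$, which is the claim.

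The main obstacle is the bookkeeping in case 2, where the weak and strict inequalities of the definition of best approximation must be handled carefully, together with the boundary case in Minkowski's theorem. The shrinking argument takes care of the latter. In the degenerate situation where $\omega$ is rationally dependent, the sequence $(p_n)$ is finite and the statement only concerns the existing indices; there the same argument applies verbatim.
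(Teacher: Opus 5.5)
Your proof is correct and is essentially the paper's argument. The paper applies Minkowski's convex body theorem to the open cylinder $\{x\in\Rr^d: |x_d|<|p_n\cdot\omega|,\ |\hat x|<|\hat p_{n+1}|\}$ intersected with the unimodular lattice $\Zz^d M_\omega$; this is exactly the image of your body $C$ under $x\mapsto x^\top M_\omega$, with the same volume $\frac{2^d}{(d-1)!}|p_n\cdot\omega|\,|\hat p_{n+1}|^{d-1}$. Your two cases ($\hat b=0$ and $\hat b\neq 0$) simply spell out the paper's one-line claim that this cylinder contains no nonzero lattice point ``by definition of best approximation vectors''. In case 2, if you take a minimizer, break ties by the smallest $|\hat b|$, or just use the dominating best approximation as you also suggest. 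The shrinking step is unnecessary, since Minkowski can be applied directly to the open cylinder.
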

	\begin{proof}
		By definition of best approximation vectors, for each $n\in\mathbb N$, the intersection of the cylinder $\CC_n=\{x\in\Rr^d:|x_d|<|p_{n}\cdot\omega|\text{ and } |\hat x|<|\hat p_{n+1}|\}$ with the lattice $\Zz M_{\omega}$, is reduced to the zero-vector. Therefore, by Minkowski's convex body theorem, 
		$\operatorname{Vol}(\CC_n)\leq 2^d|\operatorname{det}M_{\omega}|=2^d$. Now, $\operatorname{Vol}(\CC_n)=\frac{2^d}{(d-1)!}|p_{n}\cdot\omega||\hat p_{n+1}|^{d-1}$, hence $|p_{n}\cdot\omega||\hat p_{n+1}|^{d-1}\leq (d-1)!$.  	
			\end{proof}
	\begin{lemma}\label{lem:egr}
		Let $\omega=(\alpha,1)$ be in $\Rr^d$ and let $(p_n)_{n\geq 0}$ be the sequence of best approximation vectors to the linear form associated with $\omega$ and any norm on $\Rr^{d-1}$. Then for all $n\geq 0$, $|\hat p_{n+g_{d-1}}|\geq 2|\hat p_n|$ where $g_{d-1}=2\times 5^{d-1}$.
	\end{lemma}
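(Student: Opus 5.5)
The plan is a packing argument in $\Rr^{d-1}$. Fix $n$ and put $R:=|\hat p_n|$, where $|\cdot|$ now denotes the given norm on $\Rr^{d-1}$. Suppose, for contradiction, that $|\hat p_{n+g_{d-1}}|<2R$. Since $(|\hat p_m|)_m$ is strictly increasing, the $g_{d-1}+1$ vectors $\hat p_n,\dots,\hat p_{n+g_{d-1}}$ all lie in the open ball $B(0,2R)$. If $\omega$ is rationally dependent the sequence may be finite; then the statement only concerns indices where $p_{n+g_{d-1}}$ exists, and the argument below is unchanged.

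\textbf{Step 1: separation.} The key claim is that the short vectors of best approximations in this range are $R$-separated, once signs are normalized. Since $p$ and $-p$ are simultaneously best approximations, we may replace each $p_m$ by $\pm p_m$ so that $p_m\cdot\omega\geq 0$. Take $n\le i<j\le n+g_{d-1}$ and put $b:=p_j-p_i$. The errors $|p_m\cdot\omega|$ are non-increasing, and in fact strictly decreasing, because $|\hat p_i|<|\hat p_j|$ and $p_j$ is a best approximation. Hence
\[
|b\cdot\omega| = p_i\cdot\omega - p_j\cdot\omega < |p_i\cdot\omega|.
\]
We rule out $\hat b=0$ as follows. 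In that case $b=(0,k)$ with $k\in\Zz\setminus\{0\}$, so $|b\cdot\omega|=|k|\ge 1$. But $|p_i\cdot\omega|\le|p_0\cdot\omega|\le \tfrac12$, since for any admissible $\hat p_0$ the last coordinate can be chosen so that the error is at most $1/2$. So $\hat b\neq 0$.

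Now suppose $|\hat b|<|\hat p_i|$. Then $b$ has $0<|\hat b|<|\hat p_i|$ and $|b\cdot\omega|<|p_i\cdot\omega|$, contradicting the definition of $p_i$ as a best approximation. Therefore
\[
|\hat p_j-\hat p_i|\ \ge\ |\hat p_i|\ \ge\ R .
\]
This step, in particular the sign normalization that turns the difference into a genuinely smaller error, is the main point. Without it one only gets $|b\cdot\omega|\le 2|p_i\cdot\omega|$, which is useless.

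\textbf{Step 2: volume count.} The balls $B(\hat p_m,R/2)$, $n\le m\le n+g_{d-1}$, are pairwise disjoint by Step 1. They are all contained in $B(0,5R/2)$. Comparing Lebesgue volumes in $\Rr^{d-1}$, which scale as the $(d-1)$-th power of the radius for any norm ball, gives
\[
g_{d-1}+1\le \frac{(5R/2)^{d-1}}{(R/2)^{d-1}}=5^{d-1}.
\]
This contradicts $g_{d-1}=2\cdot 5^{d-1}$; the argument even shows that $5^{d-1}$ steps suffice, and the paper's constant leaves room to spare. Hence $|\hat p_{n+g_{d-1}}|\ge 2|\hat p_n|$.

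The only delicate points are the two just flagged: the sign choice in Step 1, and the use of $|p_0\cdot\omega|\le 1/2<1$ to exclude a difference with $\hat b=0$. Everything else is routine.
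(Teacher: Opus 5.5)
Your argument is correct and is essentially the paper's packing proof. The one real difference is how you handle signs. You flip $p_m\mapsto -p_m$, which is legitimate since only $|\hat p_m|$ and $|p_m\cdot\omega|$ are intrinsic. The paper instead pigeonholes the $2\cdot 5^{d-1}+1$ vectors $p_n,\dots,p_{n+2\cdot 5^{d-1}}$ according to the sign of $p_k\cdot\omega$ and works inside the larger class. That is exactly why you get $5^{d-1}$ steps rather than $2\cdot 5^{d-1}$.

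Two small remarks. First, $\hat b\neq 0$ is immediate from $|\hat p_i|<|\hat p_j|$, so you do not need the bound $|p_0\cdot\omega|\le\tfrac12$. Second, your strict inequality $|b\cdot\omega|<|p_i\cdot\omega|$ fails if $p_j\cdot\omega=0$, which can happen in the rationally dependent case. The non-strict version suffices, because the definition of a best approximation requires $|b\cdot\omega|>|p_i\cdot\omega|$ whenever $0<|\hat b|<|\hat p_i|$.
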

	\begin{proof}
		Consider a ball $B(0,2r)$ in $\Rr^{d-1}$ and a finite set $E\subset B(0,2r)$ of points whose mutual distances are all $\geq r$. Since $\bigcup_{x\in E}B(x,r/2)\subset B(0,5r/2)$ and since the balls $B(x,r/2)$, $x\in E$, do not overlap,  
		$$
		\operatorname{Vol}(B(0,5r/2))\geq \#E\times \operatorname{Vol}(B(0,r/2)),$$ 
hence $\#E\leq 5^{d-1}$. 
		
		We want to show that  $|\hat p_{n+2\times 5^{d-1}}|\geq 2|\hat p_n|$. Let us argue by contradiction and suppose that $|\hat p_{n+2\times 5^{d-1}}|< 2|\hat p_n|$. We use the above argument with $r=|\hat p_{n}|$.  Define the sets $E_+$ and $E_-$ of best approximation vectors $p_k$ with $ n\leq k\leq n+2\times 5^{d-1}$, according to the sign of the scalar product $p_k\cdot\omega$. One of these two sets has at least $5^{d-1}+1$ elements, say $E_+$.  Since $\#E_+> 5^{d-1}$, there exists $p_i, p_j\in E_+$ with $i<j$ such that $|\hat p_i-\hat p_j|< |\hat p_n|$.   	By definition of $E_+$ one has $0\leq p_j\cdot\omega<p_i\cdot\omega\leq|p_n\cdot\omega|$, hence $|(p_j- p_i)\cdot\omega|\leq |p_n\cdot\omega|$ contradicting the definition of $p_n$.
	\end{proof}
	
	\begin{lemma}
		Let $|.|_i$, $i=1,2$, be two norms on $\Rr^{d-1}$. Let $\omega=(\alpha,1)\in\Rr^d$. Denote $(p^i_n)_n$ the sequence of best approximation vectors to the linear form associated with $\omega$ and the norm $|.|_i$. Then there exists a constant $N\in\Nn$ depending only on the two norms such that for all $m,n\geq 0$, $|\hat p^1_m|_1\leq |\hat p^2_n|_1$ implies $|\hat p^2_{n+N}|_1\geq |\hat p^1_{m+1}|_1$.    
	\end{lemma}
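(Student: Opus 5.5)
Write $c|x|_1\le |x|_2\le C|x|_1$ for $x\in\Rr^{d-1}$, with constants $0<c\le C$. The plan is to argue by contradiction and trap a long block of consecutive $|.|_2$-best approximations inside a bounded range of $|.|_2$-norms. Lemma~\ref{lem:egr}, applied with the norm $|.|_2$, forbids such a block. I will take
\[
N:=g_{d-1}\bigl(\lceil \log_2(C/c)\rceil+1\bigr)+2,
\]
which depends only on the two norms.

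\textbf{Setup.} Suppose $|\hat p^1_m|_1\le |\hat p^2_n|_1$ but $|\hat p^2_{n+N}|_1<|\hat p^1_{m+1}|_1$. Set $\varepsilon:=|p^1_m\cdot\omega|$. The errors of $|.|_1$-best approximations are strictly decreasing. So by the definition of $p^1_{m+1}$, every $b\in\Zz^d$ with $0<|\hat b|_1<|\hat p^1_{m+1}|_1$ satisfies $|b\cdot\omega|\ge \varepsilon$. Hence $|p^2_k\cdot\omega|\ge\varepsilon$ for all $n\le k\le n+N$.

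\textbf{Upper bound on the errors.} Now use the $|.|_2$-best approximation property in the other direction. If some $p^2_k$ satisfies $|\hat p^2_k|_2\ge |\hat p^1_m|_2$, then $p^1_m$ is an admissible competitor, so $|p^2_k\cdot\omega|\le\varepsilon$. The errors $|p^2_k\cdot\omega|$ are strictly decreasing in $k$ (they are errors of distinct best approximations with strictly increasing short norms). They are also all $\ge\varepsilon$. Therefore at most one index $k$ in the window (necessarily the last one) can have $|\hat p^2_k|_2\ge |\hat p^1_m|_2$. For all the others,
\[
|\hat p^2_k|_2<|\hat p^1_m|_2\le C|\hat p^1_m|_1\le C|\hat p^2_n|_1\le \frac{C}{c}|\hat p^2_n|_2 .
\]

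\textbf{Contradiction.} Lemma~\ref{lem:egr} holds for any norm on $\Rr^{d-1}$. Iterating it with $|.|_2$ gives $|\hat p^2_{n+jg_{d-1}}|_2\ge 2^j|\hat p^2_n|_2$. Take $j=\lceil\log_2(C/c)\rceil+1$; then $2^j>C/c$, and the index $n+jg_{d-1}\le n+N-2$ lies strictly before the last index of the window. This contradicts the displayed bound, which proves the lemma.

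\textbf{Main obstacle.} The delicate step is the second one. One must check carefully that the error sequence of $|.|_2$-best approximations is strictly decreasing, and that $p^1_m$ really is an admissible competitor. In particular, the degenerate case $\hat b=0$ should be excluded via $|\hat p^1_m|>0$. If the strictness fails when there are ties in $|p\cdot\omega|$, I would enlarge $N$ by one more block of $g_{d-1}$ to absorb the exceptional indices.
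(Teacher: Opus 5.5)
Your proof is correct and follows the paper's argument: contradiction, then the $|.|_1$-best-approximation property to get errors $\ge |p^1_m\cdot\omega|$, then the $|.|_2$-best-approximation property with $p^1_m$ as competitor to bound the $|.|_2$-norms by $|\hat p^1_m|_2$, and finally norm equivalence against Lemma~\ref{lem:egr}. The paper applies the competitor step only at the single index $n+N$, which gives $|\hat p^2_{n+N}|_2\le|\hat p^1_m|_2$ directly, so your window, the extra $+2$ and the worry about ties are unnecessary (consecutive errors already decrease strictly by the first clause of the definition). One slip needs fixing: $|p^2_k\cdot\omega|\ge\varepsilon$ for $k<n+N$ does not follow from your $|.|_1$ statement, because $|\hat p^2_k|_1$ need not increase with $k$; it does follow from $|p^2_k\cdot\omega|>|p^2_{n+N}\cdot\omega|\ge\varepsilon$.
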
	
	\begin{proof}
		There exists a constant $C>0$ such that $\frac1C |.|_1\leq |.|_2\leq C|.|_1$. Let $k$ be an integer such that $C^2 < 2^k$ and let $N=kg_{d-1}$ where $g_{d-1}=2\times 5^{d-1}$.   Suppose on the contrary that 
		\[
		|\hat p^1_m|_1\leq |\hat p^2_n|_1 \text{ and } |\hat p^2_{n+N}|_1< |\hat p^1_{m+1}|_1.
		\]
		By definition of best approximation vectors associated with $|.|_1$ we have $|p^2_{n+N}\cdot\omega|\geq |p^1_m\cdot\omega|$. Hence, by definition of best approximation vectors associated with $|.|_2$, $|\hat p^2_{n+N}|_2\leq |\hat p^1_m|_2$. Now, 
		\[
		|\hat p^2_{n}|_1\geq \tfrac1C |\hat p^2_{n}|_2\geq \tfrac1C |\hat p^1_{m}|_1\geq \tfrac1{C^2}|\hat p^1_{m}|_2.
		\]
		By the previous lemma and by definition of $N$,
		\[
		|\hat p^2_{n+N}|_2\geq 2^k|\hat p^2_{n}|_2\geq \tfrac{2^k}{C^2}|\hat p^1_{m}|_2>|\hat p^1_{m}|_2
		\]
		a contradiction.
	\end{proof}
	
	\begin{lemma}
		The convergence of the series 
		$$
		L_{\omega}= \sum_{n\geq0} \frac{|\log|p_n\cdot\omega||}{|\hat p_n|}
		$$ does not depend on the norm on $\Rr^{d-1}$ chosen for the definition of best approximation vectors.
	\end{lemma}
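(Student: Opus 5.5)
The simplest route avoids comparing the two $L$-series term by term and goes through the Fourier-series quantity $O_\omega$ instead. The statement of $O_\omega<\infty$ involves $\min\{|k\cdot\omega|: 0<|\hat k|<2^j\}$. If the $\ell^1$ norm on $\Rr^{d-1}$ is replaced by another norm $|\cdot|_2$ with $\frac1C|\cdot|_1\le|\cdot|_2\le C|\cdot|_1$, the corresponding quantity $O^{(2)}_\omega$ is comparable to $O_\omega$. Indeed, the minima $\varepsilon^{(2)}_j$ over $|\hat k|_2<2^j$ are sandwiched between $\varepsilon^{(1)}_{j+c}$ and $\varepsilon^{(1)}_{j-c}$, where $2^c\ge C$. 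Shifting the index $j$ by $c$ changes the weight $2^{-j}$ only by a factor $2^{\pm c}$. Hence $O^{(1)}_\omega<\infty\iff O^{(2)}_\omega<\infty$.

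To make this work, I would redo the proof of Theorem~\ref{prop:Br2}~(1) for an arbitrary norm $|\cdot|_i$. That proof only used two facts:
\begin{enumerate}
\item the counting bound $\#\{n: 2^{j-1}<|\hat p_n|\le 2^j\}\le g_{d-1}$, which Lemma~\ref{lem:egr} provides for \emph{any} norm;
\item the fact that $\varepsilon_j$ equals $|p_n\cdot\omega|$ for the maximal $n$ with $|\hat p_n|\le 2^j$, which follows directly from the definition of best approximations in that norm.
\end{enumerate}
So the same argument gives $\frac12 O^{(i)}_\omega\le L^{(i)}_\omega\le 2g_{d-1}O^{(i)}_\omega$ for $i=1,2$. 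Chaining these with the comparability of $O^{(1)}_\omega$ and $O^{(2)}_\omega$ yields $L^{(1)}_\omega<\infty\iff L^{(2)}_\omega<\infty$.

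There are two things to check carefully. First, the boundary terms: small $j$ where $\varepsilon_j$ may be large or the minimum set may be empty contribute only finitely many terms. Second, the rationally dependent case: by Remark~\ref{rem:lineardep}, both series are $+\infty$ in every norm, so the equivalence holds trivially there.

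Alternatively, one could use the preceding lemma directly. Each block of indices of $(p^2_n)$ lying between consecutive $p^1_m$ has size at most $N$, and within such a block both $|\hat p|$ and $|p\cdot\omega|$ are controlled by the neighbouring $p^1$'s, up to constants. The main obstacle in that route is bounding $|\log|p^2_n\cdot\omega||$ by $|\log|p^1_m\cdot\omega||$ plus an additive constant, since the errors sit between $|p^1_{m+1}\cdot\omega|$ and $|p^1_{m-1}\cdot\omega|$, which involves shifted indices. This is why I would use the $O_\omega$ route as the main argument.
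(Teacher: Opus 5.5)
Your argument is correct, but it differs from the paper's.

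\textbf{The paper's route.} The paper compares the two $L$-series directly, using the preceding interleaving lemma. It groups the indices into blocks $J_m=\{n: |\hat p^1_m|_1\le|\hat p^2_n|_1<|\hat p^1_{m+1}|_1\}$, each of size at most $N$. For $n\in J_m$, the minimality of $|p^1_m\cdot\omega|$ among vectors with $0<|\hat b|_1<|\hat p^1_{m+1}|_1$ gives $|p^2_n\cdot\omega|\ge|p^1_m\cdot\omega|$. So each block contributes at most $NC\,|\log|p^1_m\cdot\omega||/|\hat p^1_m|_1$, and the converse follows by symmetry.

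\textbf{The obstacle you anticipated does not arise.} Each implication needs only a one-sided bound on $|\log|p^2_n\cdot\omega||$; recall that all errors are $\le\frac12$, so this quantity is $-\log|p^2_n\cdot\omega|$. That bound involves $p^1_m$ alone, with no shifted indices.

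\textbf{Your route.} You factor through $O_\omega$, where the norm enters only through the index sets $\{0<|\hat k|<2^j\}$, so norm-independence is a pure index shift. You then transfer this to $L_\omega$ by rerunning the proof of Theorem~\ref{prop:Br2}~(1) in an arbitrary norm. This is legitimate for two reasons: Lemma~\ref{lem:egr} is proved for any norm, and identifying $\varepsilon_j$ with the error of the last best approximation of size $\le 2^j$ uses only the definition of best approximation.

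\textbf{Comparison.} Your approach makes the interleaving lemma unnecessary and presents the result as a corollary of the $O_\omega$--$L_\omega$ equivalence. The cost is the boundary bookkeeping you already flagged:
\begin{enumerate}
\item the mismatch between $<2^j$ in $O_\omega$ and $\le 2^j$ in the proof of Theorem~\ref{prop:Br2}~(1);
\item the finitely many $j$ with an empty minimum, or $n$ with $|\hat p_n|_i\le 1$;
\item the rationally dependent case.
\end{enumerate}
The paper's block comparison is self-contained by contrast. Both arguments ultimately rest on the same input, the norm-free exponential growth in Lemma~\ref{lem:egr}.
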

	\begin{proof}
		Let $|.|_i$, $i=1,2$, be two norms on $\Rr^{d-1}$. Let $\omega=(\alpha,1)\in\Rr^d$. Denote $(p^i_n)_n$ the sequence of best approximation vectors to the linear form associated with $\omega$ and the norm $|.|_i$. For each $m\in\Nn$, let denote $J_m=\{n\in\Nn:|\hat p^1_{m}|_1\leq |\hat p^2_{n}|_1<  |\hat p^1_{m+1}|_1\}$. By the previous lemma, there exists $N\in\Nn$ such that for all $m$, $\# J_m\leq N$. As in the previous lemma, there is a constant $C>0$ such that $\frac1C |.|_1\leq |.|_2\leq C|.|_1$. By definition of best approximations, for each $n\in J_m$, $|p^2_n\cdot\omega|\geq|p^1_m\cdot\omega|$, hence
		\[
		\sum_{n\in J_m}\frac{-\log|p^2_n\cdot\omega|}{|\hat p^2_n|_2}\leq \sum_{n\in J_m}C\frac{-\log|p^1_m\cdot\omega|}{|\hat p^2_n|_1}\leq NC\frac{-\log|p^1_m\cdot\omega|}{|\hat p^1_m|_1}.  
		\]  
		Consequently, $\sum_{m\in \Nn}\frac{-\log|p^1_m\cdot\omega|}{|\hat p^2_m|_1}<\infty$ implies $\sum_{n\in N}\frac{-\log|p^2_n\cdot\omega|}{|\hat p^2_n|_2}<\infty$.
	\end{proof}

\section*{Acknowledgements}

João Lopes Dias and José Pedro Gaivão are supported by national funds through FCT – Fundação para a Ciência e a Tecnologia, I.P., in the framework of the unit UID/06522/2025.  Antoine Marnat is supported by the French Agence Nationale de la Recherche (ANR), project ANR-25-CE40-1961-01.  Nikolay Moshchevitin's research is supported by Austrian Science Fund (FWF), Forschungsprojekt PAT1961524.

\bibliographystyle{plain} 
\bibliography{rfrncs} 
	
\end{document}